\newtheorem{thm}{Theorem}
\newtheorem{lem}{Lemma}
\newtheorem{cor}{Corollary}
\newtheorem{prop}{Proposition}
\newtheorem{rem}{Remark}
\newtheorem{defi}{Definition}
\newcommand{\eps}{\varepsilon}
\newcommand{\R}{\mathbb{R}}
\newcommand{\N}{\mathbb{N}}
\newcommand{\Z}{\mathbb{Z}}
\renewcommand{\div}{{\rm div}\,}
\newcommand{\Id}{{\rm Id}\,}
\newcommand{\Supp}{{\rm Supp}\,}
\newcommand{\Sum}{\displaystyle \sum}
\newcommand{\Int}{\displaystyle \int}
\newcommand{\Frac}{\displaystyle \frac}
\def\ov{\overline}
\def\d{\partial}
\def\dq{\Delta_q}
\def\tilde{\widetilde}
\def\hat{\widehat}
\def\div{{\rm div}\,}
\def\s{\sigma}
\def\cC{{\mathcal C}}
\def\cF{{\mathcal F}}
\def\cH{{\mathcal H}}
\def\cL{{\mathcal L}}
\def\cP{{\mathcal P}}
\def\cQ{{\mathcal Q}}
\def\cS{{\mathcal S}}
\def\da{\delta\!a}
\def\df{\delta\!f}
\def\du{\delta\!u}
\def\dr{\delta\!\rho}
\def\dPi{\delta\!\Pi}
\begin{document}
\title[Density-dependent incompressible Euler equations]
{On the well-posedness of  the incompressible density-dependent Euler equations
in the   $L^p$ framework}

\author[R. Danchin]{Rapha\"el Danchin}
\address[R. Danchin]
{Universit\'e Paris-Est, LAMA, UMR 8050,
 61 avenue du G\'en\'eral de Gaulle,
94010 Cr\'eteil Cedex, France.}
\email{danchin@univ-paris12.fr}

\date\today
\begin{abstract}
The present paper is devoted to the study of the well-posedness
issue for the density-dependent Euler equations in the whole space. 
We establish local-in-time   results  for the Cauchy problem pertaining to data
in the Besov spaces  embedded in the set of Lipschitz functions,
 including the borderline case $B^{\frac Np+1}_{p,1}(\R^N).$ 
A continuation criterion  in the spirit of the celebrated one by  Beale-Kato-Majda in \cite{BKM}
for the classical Euler equations, is  also proved.

In contrast with the previous work dedicated to this system in the whole space, 
our approach is not restricted to the $L^2$ framework or to small perturbations
of a constant density state: we just need the density to be bounded away from zero. The key to that improvement is a new a priori estimate
in Besov spaces for an elliptic equation with nonconstant coefficients.
  \end{abstract}

\maketitle

The evolution of  the density $\rho=\rho(t,x)\in\R^+$
and of the velocity field  $u=u(t,x)\in\R^N$ of 
a nonhomogeneous incompressible fluid satisfies the following 
\emph{density-dependent Euler equations}:
\begin{equation}\label{eq:ddeuler}
\left\{\begin{array}{l}
\d_t\rho+u\cdot\nabla \rho=0,\\[1ex]
\rho(\d_tu+u\cdot\nabla u)+\nabla\Pi=\rho f,\\[1ex]
\div u=0.
\end{array}\right.
\end{equation}
Above, $f$ stands for a given body force and the gradient of the  pressure $\nabla\Pi$
is the Lagrangian multiplier associated to the divergence free constraint over the velocity. 
We assume the space variable $x$ to belong to the whole $\R^N$ with $N\geq2$. 
\smallbreak
A  plethoric number of recent mathematical works have been devoted
to the study of the classical  incompressible Euler equations
\begin{equation}\label{eq:euler}
\left\{\begin{array}{l}
\d_tu+u\cdot\nabla u+\nabla\Pi= f,\\[1ex]
\div u=0.
\end{array}\right.
\end{equation}
which may be seen   as a special case of  \eqref{eq:ddeuler}  (just take $\rho\equiv1$).
\smallbreak
In contrast, not so many works have been devoted to the study of \eqref{eq:ddeuler}
in the nonconstant density case.  In the situation  
where the  equations are considered in  a suitably smooth bounded 
 domain of $\R^2$ or $\R^3,$ the local
 well-posedness issue has been investigated by H. Beir\~ao da Veiga and A. Valli
 in \cite{BV1,BV2,BV3} for data 
 with high enough  H\"older regularity. 
The case of  data with   $W^{2,p}$ regularity  has been studied by A. Valli and  
W. Zaj\c aczkowski in \cite{VZ} and by S. Itoh and A. Tani in \cite{IT}.
 The whole space case $\R^3$ has been addressed by S. Itoh  in \cite{Itoh}.
 There,  the local existence  for initial data $(\rho_0,u_0)$ such that  $\rho_0$ is bounded, 
 bounded away from $0$ and such that $\nabla\rho_0\in H^2,$ in $u_0$ is in $H^3$ has been obtained.
 In the recent paper \cite{D4}, we have generalized \cite{Itoh}'s result to any dimension $N\geq2$
 and any Sobolev space $H^s$ with $s>1+N/2.$ Data in the limit Besov space $B^{\frac N2+1}_{2,1}$
 are also considered. 
 
 Let us also mention that, according to the work by J. Marsden in \cite{M},  
 the finite energy   solutions to~\eqref{eq:ddeuler} may be interpreted in terms of the 
action of geodesics.
This latter study is motivated by the fact that, as in the homogeneous situation, 
System \eqref{eq:ddeuler}  has a conserved energy, namely
\begin{equation}\label{eq:energy}
\|(\sqrt\rho\,u)(t)\|_{L^2}^2=\|\sqrt\rho_0\,u_0\|_{L^2}^2
+2\int_0^t\int_{\R^N} (\rho f\cdot u)(\tau,x)\,d\tau\,dx.
\end{equation}
Motivated by the fact  that, in real life, a fluid is hardly 
homogeneous,  we here want to study whether  
 the classical results for homogeneous fluids remain 
true in the nonhomogeneous framework. 
More precisely,  we aim at investigating the existence and uniqueness issue 
 in the whole space and in the $L^p$ framework
 for densities which may be large perturbations of   a constant function:  we only require 
 the density to be  bounded and bounded away from zero and to have enough regularity. 
We shall also establish blow-up criteria in the spirit of the celebrated one by
Beale-Kato-Majda criterion for \eqref{eq:euler} (see \cite{BKM}).

The functional framework that we shall adopt -- Besov spaces embedded in 
the set $C^{0,1}$ of bounded globally Lipschitz functions -- is motivated by  
the fact that  the density and velocity equations
of \eqref{eq:ddeuler} are transport equations
by the velocity field. Hence no gain of smoothness may be
expected during the evolution and conserving the initial regularity
requires the velocity field to be at least locally Lipschitz with respect
to the space variable. 
In fact, as regards the velocity field, the spaces that we shall use are exactly those
that are  suitable for \eqref{eq:euler}.  
We thus believe our results to be optimal in terms of regularity. 
\smallbreak
Now, compared to the classical Euler equations, handling the gradient of the pressure
is much more involved. To eliminate the pressure, the natural strategy consists in solving 
 the elliptic equation
 $$
 \div\bigl(a\nabla\Pi\bigr)=\div F\quad\hbox{with }\ F:=\div(f-u\cdot\nabla u)\ \hbox{ and }\
 a:=1/\rho.
 $$ 
 If $a$  is  a small perturbation of  a constant function $\ov a$ then
 the above equation may be rewritten 
 $$
 \ov a\Delta\Pi=\div\bigl((\ov a-a)\nabla\Pi\bigr)+ 
 \div F.
 $$
 Now, if $1<p<\infty$ then  the standard $L^p$ elliptic estimate 
 may be used for absorbing the first term in the right-hand side.
 Hence we expect to get the same well-posedness results as for \eqref{eq:euler}
 in this situation.  
 As a matter of fact, this strategy has been successfully implemented 
 by Y. Zhou in \cite{Zper}.

 In the general case of \emph{large} perturbations of 
 a constant density state, solving the above equation in the $\R^N$ framework
for $F\in L^p$ may be a problem (unless $p=2$ of course). 
In fact, to our knowledge, even if $a$ is smooth, bounded and bounded away from zero,
there is no solution operator $\cH:F\rightarrow\nabla\Pi$ such that 
$$
\|\nabla\Pi\|_{L^p}\leq C\|F\|_{L^p}
$$
unless $p$ is ``close'' to $2$ (see the work by N. Meyers in \cite{Meyers}). 
However that closeness is strongly related to whether $a$ itself is close to 
a constant hence no result for all $p$ may be obtained by taking advantage of
Meyers' result.

In the present work, we shall  overcome this difficulty by requiring  the data to 
satisfy a finite energy condition  so as to ensure that $F$
is in  $L^2.$ Indeed, this will enable us to  use the classical $L^2$ estimate
and from that, it turns out to be  possible to get estimates in high order Besov spaces $B^s_{p,r}.$  
\smallbreak
This observation is the conducting thread leading to the first two  well-posedness results
stated in the next section. 
The rest of the paper unfolds as follows. 
In section \ref{s:tools}, we introduce the Littlewood-Paley decomposition and recall the definition of the nonhomogeneous Besov spaces $B^s_{p,r}.$ Then,
 we define the paraproduct and remainder
operators and state a few classical results in Fourier analysis. 
Section \ref{s:elliptic} is devoted to the proof of existence results and a priori estimates 
for an elliptic equation with nonconstant coefficients in the Besov space framework. 
To our knowledge, most of the results that are presented therein are new. 
 Sections \ref{s:th:main}, \ref{s:th:2} and \ref{s:th:3} 
  are dedicated to the proof of our main existence and continuation results.
 Some technical lemmas have been postponed in the  appendix. 
 \medbreak  
\noindent\emph{Notation.}
Throughout the paper, $C$ stands for
a harmless ``constant'' whose exact meaning depends on the context.

For all  Banach space $X$ and interval $I$ of $\R,$  
we denote by $\cC(I;X)$ (resp. $\cC_b(I;X)$)
 the set of continuous  (resp. continuous bounded) functions on $I$ with
values in $X.$
If $X$ has predual $X^*$ then 
we denote by $\cC_w(I;X)$ the set of bounded measurable functions 
$f:I\rightarrow X$ such that for any $\phi\in X^*,$ the 
function $t\mapsto\langle f(t),\phi\rangle_{X\times X^*}$
is continuous over $I.$
For $p\in[1,\infty]$, the notation $L^p(I;X)$ 
stands for the set
 of measurable functions on  $I$ with values in $X$ such that
$t\mapsto \|f(t)\|_X$ belongs to $L^p(I)$.
We denote by $L^p_{loc}(I)$ the set of those functions defined on $I$ and valued in $X$
which, restricted to any compact subset $J$ of $I,$ are in $L^p(J).$

Finally, for any real valued function $a$ over $\R^N,$ we 
denote
$$
a_*:=\inf_{x\in\R^N}a(x)\quad\hbox{and}\quad
a^*:=\sup_{x\in\R^N}a(x).
$$


\section{Main results}\label{s:results}

As explained in the introduction, we hardly expect to get any well-posedness result
if the initial velocity is not in $C^{0,1}.$ 
It is well-known (see e.g. \cite{BCD},  Chap. 2) that  
the nonhomogeneous Besov space  
 $B^{s}_{p,r}$ is continuously embedded in $C^{0,1}$ 
  is and only if the  triplet $(s,p,r)\in\R\times[1,\infty]^2$  satisfies the following 
  condition: 
 $$
s>1+N/p\quad\hbox{or}\quad s\geq 1+N/p\ \hbox{ and }\ r=1.\leqno(C)
$$
This motivates the following 
 statement concerning the existence of smooth solutions with finite energy:
\begin{thm}\label{th:main}
Let $(s,p,r)$ satisfy Condition~$(C)$ with $1<p<\infty.$
Let $u_0$ be a divergence-free vector-field with coefficients
in $L^2\cap B^s_{p,r}.$ Suppose that the body force $f$ 
has coefficients in $L^1([-T_0,T_0];B^s_{p,r})\cap\cC([-T_0,T_0];L^2)$ for some $T_0>0.$
Assume that $\rho_0$ is positive, bounded and bounded away from zero and
that $\nabla\rho_0\in B^{s-1}_{p,r}.$  If $p<2,$ suppose in addition that  
$(\rho_0-\ov\rho)\in L^{p^*}$ with $p^*:=2p/(2-p)$ for some positive real number $\ov\rho.$

There exists a time $T\in(0,T_0]$ such that
 System $\eqref{eq:ddeuler}$ supplemented with initial data
$(\rho_0,u_0)$ has a unique local solution $(\rho,u,\nabla\Pi)$ on $[-T,T]\times\R^N$ with:
\begin{itemize}
\item $\rho^{\pm1}\in\cC_b([-T,T]\times\R^N),\quad
D\rho\in\cC_w([-T,T];B^{s-1}_{p,r})$ (and $(\rho-\ov\rho)\in\cC([-T,T];L^{p^*})$ if $p<2$),
\item $u\in\cC^1([-T,T];L^2)\cap\cC_w([-T,T];B^s_{p,r}),$
\item $\nabla\Pi\in\cC([-T,T];L^2)\cap L^1([-T,T];B^s_{p,r}).$
\end{itemize}
Besides, the energy equality $\eqref{eq:energy}$ is satisfied for all $t\in[-T,T],$
and time continuity holds with respect to the strong topology, if $r<\infty.$
\end{thm}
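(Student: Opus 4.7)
The strategy is the classical one for quasilinear transport systems: construct smooth approximations by a linear iteration scheme, obtain uniform bounds on a non-shrinking time interval, pass to the limit, and finally handle uniqueness and time continuity. After regularizing the data by Littlewood-Paley truncation, setting $\rho_0^n:=\ov\rho+S_n(\rho_0-\ov\rho)$, $u_0^n:=S_n u_0$ and $f^n:=S_n f$ (the extra $L^{p^*}$ hypothesis when $p<2$, interpolated with $L^\infty$, is precisely what provides the $L^2$ control of $\rho_0-\ov\rho$ needed to run the energy estimate uniformly in $n$), I would iterate by first solving
\begin{equation*}
\d_t\rho^{k+1}+u^k\cdot\nabla\rho^{k+1}=0,\qquad \rho^{k+1}|_{t=0}=\rho_0^n,
\end{equation*}
then computing $\nabla\Pi^{k+1}$ from
\begin{equation*}
\div\bigl((\rho^{k+1})^{-1}\nabla\Pi^{k+1}\bigr)=\div\bigl(f^n-u^k\cdot\nabla u^k\bigr)
\end{equation*}
via the elliptic theory of Section \ref{s:elliptic}, and finally
\begin{equation*}
\d_t u^{k+1}+u^k\cdot\nabla u^{k+1}+(\rho^{k+1})^{-1}\nabla\Pi^{k+1}=f^n,\qquad u^{k+1}|_{t=0}=u_0^n.
\end{equation*}

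For uniform bounds I would run a Gronwall-type loop on three quantities: the pointwise bounds on $\rho^{k+1}$ (preserved by pure transport), the norm $\|\nabla\rho^{k+1}\|_{B^{s-1}_{p,r}}$ (controlled by transport estimates up to $\exp(C\int\|\nabla u^k\|_{L^\infty})$), and $\|u^{k+1}\|_{B^s_{p,r}}$ (controlled by the transport estimate in $B^s_{p,r}$ up to the same exponential factor plus $\int\|\nabla\Pi^{k+1}\|_{B^s_{p,r}}$). An $L^2$ energy inequality for $\sqrt{\rho^{k+1}}\,u^{k+1}$, derived from the momentum equation, $\div u^k=0$ and the density equation modulo an easily controlled commutator coming from the fact that $\rho^{k+1}$ is transported by $u^k$ rather than $u^{k+1}$, supplies the $L^2$ input required by the pressure step. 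Inserting the Section \ref{s:elliptic} bound on $\nabla\Pi^{k+1}$, which is linear in $\|\nabla\rho^{k+1}\|_{B^{s-1}_{p,r}}$ modulo $L^2$ lower order quantities, closes the loop and yields a common existence time $T>0$ depending only on norms of the data.

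Convergence is obtained by showing that the differences $(\rho^{k+1}-\rho^k,\,u^{k+1}-u^k,\,\nabla\Pi^{k+1}-\nabla\Pi^k)$ contract in a low-regularity norm, typically $L^\infty$ for the density and $L^\infty_T(L^2)$ for the velocity, using the Lipschitz dependence of the elliptic solver on its coefficient. The limit triple is a solution on $[-T,T]$ that is weak-$*$ continuous in the high-regularity spaces, and the energy equality \eqref{eq:energy} is recovered by taking the $L^2$ scalar product of the momentum equation with $u$ and using $\div u=0$ together with the continuity equation. Uniqueness follows by applying the very same stability estimate to two solutions with identical data, and strong time continuity when $r<\infty$ results from a standard Littlewood-Paley tail argument based on density of smooth functions in $B^s_{p,r}$.

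The delicate point, and the heart of the analysis, is the pressure estimate. For large perturbations of a constant density state, no $L^p$ elliptic theory is available for general $p\in(1,\infty)$, as recalled in the introduction; the machinery of Section \ref{s:elliptic} bypasses this obstruction by providing a bound on $\|\nabla\Pi\|_{B^s_{p,r}}$ that is linear in $\|\nabla\rho\|_{B^{s-1}_{p,r}}$, at the price of requiring $L^2$ information on the source. This is exactly why the finite-energy hypothesis, and the auxiliary $L^{p^*}$ assumption on $\rho_0-\ov\rho$ when $p<2$, must enter the statement of the theorem.
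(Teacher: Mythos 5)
Your overall architecture — regularize the data, run a linear iteration scheme, establish uniform bounds on a common time interval, contract in a low-regularity norm, and recover energy, uniqueness and time continuity from the stability estimate — is indeed the one used in the paper. However, there are two genuine problems with the argument as sketched.

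First, the role of the $L^{p^*}$ hypothesis is misidentified. You claim that $(\rho_0-\ov\rho)\in L^{p^*}$, interpolated with $L^\infty$, supplies $L^2$ control of $\rho_0-\ov\rho$ for the energy estimate. But for $1<p<2$ one has $p^*=2p/(2-p)>2$, so $L^{p^*}\cap L^\infty$ embeds only into $L^q$ for $q\geq p^*$ and never reaches $L^2$; the interpolation runs in the wrong direction. Moreover the energy equality \eqref{eq:energy} requires only $\rho$ bounded and $u_0\in L^2$, not any integrability of $\rho_0-\ov\rho$. The actual role of the $L^{p^*}$ assumption is the H\"older exponent arithmetic $1/p=1/2+1/p^*$: writing $\Delta\Pi=\div\bigl(\rho f-\rho u\cdot\nabla u-(\rho-\ov\rho)\d_tu\bigr)$ and using Proposition \ref{p:Lp}, one controls $\|\nabla\Pi\|_{L^p}$ through the product $\|(\rho-\ov\rho)\|_{L^{p^*}}\|\d_tu\|_{L^2}$. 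This $L^p$ bound on $\nabla\Pi$ is precisely the missing ingredient needed to upgrade $\Delta\Pi\in B^{s-1}_{p,r}$ to $\nabla\Pi\in B^s_{p,r}$ via Proposition \ref{p:CZ}, and it is the crux of the $p<2$ case. This leads to the second gap: the second part of Proposition \ref{p:elliptic}, which produces $B^s_{p,r}$ pressure bounds from $L^2$ data, is only available for $p\geq2$, so the iteration cannot be closed directly in $B^s_{p,r}$ when $p<2$. The paper handles $p<2$ by first producing the solution through the embedding $B^s_{p,r}\hookrightarrow H^\sigma$ and the already-proved $p=2$ case, and only then showing that the $B^s_{p,r}$ regularity persists; your sketch does not distinguish these two regimes, and the scheme you describe would not close for $p<2$ without this detour.

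A more minor but real issue: you use $\div u^k=0$ as an input to the energy estimate, but the linear iterates are not divergence free. Taking the divergence of your velocity step gives a nonzero forcing because the pressure was computed from $u^k$ rather than $u^{k+1}$. The paper acknowledges this explicitly (see \eqref{eq:energy1}) and retains the term $\frac12\int\rho^{n+1}|u^{n+1}|^2\div u^n\,dx$, closing the energy estimate through Gronwall. Your argument would need the same correction.
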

A few comments are in order:
\begin{itemize}
\item  For the classical  incompressible Euler equations \eqref{eq:euler}, 
the above result statement (without the $L^2$ assumption) belongs  to the mathematical folklore. 
It has been  established  in e.g. \cite{Z}
 in the case $1<p<\infty$ and in e.g. \cite{BCD}, Chap. 7 in the case $1\leq p\leq\infty.$
 \item Note that the above statement covers the borderline case $B^{\frac Np+1}_{p,1}$
for Condition $(C)$ \emph{without any smallness assumption}. Thus, up to 
the lower order $L^2$ assumption which is needed to control
the low frequencies of the pressure,  it extends the
result \cite{Zper} by Y. Zhou mentioned in the introduction.
 \item If one makes the stronger assumption that
  $(\rho_0-\ov\rho)\in B^s_{p,r}$  for some
 positive constant $\ov\rho$ then we get in addition 
 $(\rho-\ov\rho)\in\cC([-T,T];B^s_{p,r})$ (or 
 $\cC_w([-T,T];B^s_{p,r})$ if $r=\infty$).
 \item If $1<p\leq2$ then  $u_0\in B^s_{p,r}$ implies that $u_0\in L^2.$
 Furthermore, in dimension $N\geq3,$ the assumption that $(\rho_0-\ov\rho)\in L^{p^*}$
 may be omitted if $p>N/(N-1).$
 Therefore, except if $N=2$ and $p<2$ or if $N\geq3$ and $p\leq N/(N-1),$ the density
 need not to tend to some constant at infinity. 
\item In contrast with the homogeneous case, in dimension $N=2,$  the  global well-posedness
issue for \eqref{eq:ddeuler} with nonconstant density 
is an open (and challenging) problem.
 Indeed,  the vorticity $\omega:=\d_1u^2-\d_2u^1$ satisfies 
$$
\d_t\omega+u\cdot\nabla\omega+\d_1(\textstyle\frac1\rho)\d_2\Pi-\d_2(\frac1\rho)\d_1\Pi=0,
$$
hence is no  transported by the flow of $u$ if the density is not a constant.
\end{itemize}
Under the assumptions of Theorem \ref{th:main}, the solutions
to \eqref{eq:ddeuler} satisfy the following
Beale-Kato-Majda type continuation  criterion. For simplicity, we state the result for positive times only.
\begin{thm}\label{th:BKM}
  Under the hypotheses of Theorem $\ref{th:main},$
    consider a solution $(\rho,u,\nabla\Pi)$ to $\eqref{eq:ddeuler}$ on $[0,T)\times\R^N$ 
  with the properties described in Theorem $\ref{th:main}.$ 
   If in addition  
   \begin{equation}\label{eq:bu0}
   \int_0^T\bigl(\|\nabla u\|_{L^\infty}+\|\nabla\Pi\|_{B^{s-1}_{p,r}}\bigr)\,dt<\infty
 \end{equation}then $(\rho,u,\nabla\Pi)$ may be continued beyond $T$ into 
a  solution of $\eqref{eq:ddeuler}$ with the same regularity.

Moreover, in the case $s>1+N/p$, the term $\nabla u$ may be replaced by ${\rm curl}
\, u$
in $\eqref{eq:bu0}.$
\end{thm}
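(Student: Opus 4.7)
The strategy is the classical one for BKM-type continuation: obtain a priori bounds on $(\rho,u,\nabla\Pi)$ in the norms appearing in Theorem~\ref{th:main} which remain finite on all of $[0,T)$ under \eqref{eq:bu0}; then invoke Theorem~\ref{th:main} with data taken at some $t^*<T$ close enough to $T$ that the local existence interval, whose length only depends on the now-uniform bounds, carries the solution strictly past $T$.

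For the density, the transport equation $\d_t\rho+u\cdot\nabla\rho=0$ with Lipschitz $u$ preserves $\rho_*,\rho^*$ and yields, by the standard Besov transport estimate,
$$\|\nabla\rho(t)\|_{B^{s-1}_{p,r}}\le\|\nabla\rho_0\|_{B^{s-1}_{p,r}}\exp\Bigl(C\!\int_0^t\!\|\nabla u\|_{L^\infty}\,d\tau\Bigr),$$
which is bounded on $[0,T)$. Rewriting the momentum equation as $\d_tu+u\cdot\nabla u=f-\tfrac1\rho\nabla\Pi$ and again applying the Besov transport estimate gives
$$\|u(t)\|_{B^s_{p,r}}\le\Bigl(\|u_0\|_{B^s_{p,r}}+\int_0^t\!\bigl(\|f\|_{B^s_{p,r}}+\|\tfrac1\rho\nabla\Pi\|_{B^s_{p,r}}\bigr)d\tau\Bigr)\exp\Bigl(C\!\int_0^t\!\|\nabla u\|_{L^\infty}\,d\tau\Bigr),$$
where Moser-type product estimates reduce $\|\tfrac1\rho\nabla\Pi\|_{B^s_{p,r}}$ to $C(\rho_*,\rho^*,\|\nabla\rho\|_{B^{s-1}_{p,r}})\|\nabla\Pi\|_{B^s_{p,r}}$.

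The crux is therefore to close the estimate for $\|\nabla\Pi\|_{B^s_{p,r}}$. For this, we invoke the elliptic a priori estimate of Section~\ref{s:elliptic} for $\div(\tfrac1\rho\nabla\Pi)=\div F$ with $F=f-u\cdot\nabla u$. Using $\div u=0$ to rewrite $\div F=\div f-\nabla u:(\nabla u)^T$ (which is what avoids the spurious loss of a derivative), we obtain
$$\|\nabla\Pi(t)\|_{B^s_{p,r}}\le\Phi(t)\Bigl(\|\nabla\Pi(t)\|_{L^p}+\|f(t)\|_{B^s_{p,r}}+\|\nabla u(t)\|_{L^\infty}\|u(t)\|_{B^s_{p,r}}\Bigr),$$
with $\Phi$ depending continuously on $\|\nabla\rho\|_{B^{s-1}_{p,r}}$ and $\rho_*,\rho^*$. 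The low-frequency piece $\|\nabla\Pi\|_{L^p}$ is controlled directly by \eqref{eq:bu0} through the embedding $B^{s-1}_{p,r}\hookrightarrow L^p$ valid under~$(C)$. Inserting this into the velocity estimate and applying Gr\"onwall closes the $B^s_{p,r}$ bound on $u$ and yields $\nabla\Pi\in L^1([0,T);B^s_{p,r})$. A standard continuity argument then provides limits of $(\rho,u)$ at $t=T$ in the required spaces, and the continuation follows as outlined above.

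For the refinement in the case $s>1+N/p$, one uses the logarithmic interpolation inequality
$$\|\nabla u\|_{L^\infty}\le C\bigl(1+\|{\rm curl}\,u\|_{L^\infty}\log(e+\|u\|_{B^s_{p,r}})\bigr),$$
available since $s-1>N/p$. Plugging this into the Gr\"onwall chain turns it into an Osgood-type inequality: finiteness of $\int_0^T\|{\rm curl}\,u\|_{L^\infty}$ and $\int_0^T\|\nabla\Pi\|_{B^{s-1}_{p,r}}$ forces $\|u\|_{B^s_{p,r}}$ to grow at most doubly exponentially, so $\int_0^T\|\nabla u\|_{L^\infty}$ is automatically finite and the main criterion applies. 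The principal difficulty throughout is precisely the elliptic estimate for $\|\nabla\Pi\|_{B^s_{p,r}}$: without the divergence-free rewriting of $\div F$, one would need $u\in B^{s+1}_{p,r}$, beyond the available regularity -- it is the interplay between the structure of the Euler equation and Section~\ref{s:elliptic}'s elliptic theory that makes the argument close.
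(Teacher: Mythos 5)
Your overall strategy (a priori bounds plus re-initializing the local existence theorem near $T$) is the right one and matches the structure of the paper's Lemmas \ref{l:continuation1}--\ref{l:continuation2}. However, the a priori estimates as you set them up do not close, for two related reasons.

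The first and decisive gap is the claim that the density gradient bound closes by itself, namely
$\|\nabla\rho(t)\|_{B^{s-1}_{p,r}}\le\|\nabla\rho_0\|_{B^{s-1}_{p,r}}\exp\bigl(C\int_0^t\|\nabla u\|_{L^\infty}\,d\tau\bigr)$.
While $\rho$ is transported, $\nabla\rho$ is not: it solves
$\d_t\d_k\rho+u\cdot\nabla\d_k\rho=-\d_ku\cdot\nabla\rho$,
and estimating the source in $B^{s-1}_{p,r}$ unavoidably produces the term $\|\nabla\rho\|_{L^\infty}\|\nabla u\|_{B^{s-1}_{p,r}}$, i.e.\ the very $B^s_{p,r}$-regularity of $u$ that you are trying to control. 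So the density-gradient bound couples with the velocity bound, and a joint Gr\"onwall argument on $X(t):=\|\nabla a(t)\|_{B^{s-1}_{p,r}}+\|u(t)\|_{B^s_{p,r}}$ is forced. The constant $\Phi(t)$ in your elliptic estimate is then \emph{not} a priori bounded, contrary to what you use. The second gap then becomes fatal: the elliptic estimate you invoke (the first item of Proposition \ref{p:elliptic}, through $\|\nabla\Pi\|_{L^p}$) carries the prefactor $\bigl(1+a_*^{-1}\|Da\|_{B^{s-1}_{p,r}}\bigr)^\sigma$, and your product estimate $\|\tfrac1\rho\nabla\Pi\|_{B^s_{p,r}}\leq C(\|\nabla\rho\|_{B^{s-1}_{p,r}})\|\nabla\Pi\|_{B^s_{p,r}}$ also has an $X$-dependent coefficient. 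Feeding these into the joint Gr\"onwall produces a superlinear differential inequality in $X$, which does not close even under \eqref{eq:bu0}.

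The paper resolves both difficulties by choosing the estimates so that $X$ appears only linearly with an $L^1$-in-time coefficient. The product $a\nabla\Pi$ is bounded by the \emph{tame} estimate of Corollary \ref{c:op},
$\|a\nabla\Pi\|_{B^s_{p,r}}\leq C\bigl(a^*\|\nabla\Pi\|_{B^s_{p,r}}+\|\nabla\Pi\|_{L^\infty}\|\nabla a\|_{B^{s-1}_{p,r}}\bigr)$,
which is a sum, not a product, of the two large norms. More importantly, the pressure is bounded by the \emph{last} part of Proposition \ref{p:elliptic},
$a_*\|\nabla\Pi\|_{B^s_{p,r}}\leq C\bigl(\|\div F\|_{B^{s-1}_{p,r}}+\|\nabla a\|_{L^\infty}\|\nabla\Pi\|_{B^{s-1}_{p,r}}+\|\nabla\Pi\|_{L^\infty}\|\nabla a\|_{B^{s-1}_{p,r}}\bigr)$,
where $\|\nabla a\|_{B^{s-1}_{p,r}}$ appears \emph{linearly} multiplied by $\|\nabla\Pi\|_{L^\infty}\lesssim\|\nabla\Pi\|_{B^{s-1}_{p,r}}\in L^1_t$ (from \eqref{eq:bu0}), while $\|\nabla a\|_{L^\infty}$ is genuinely transported and therefore bounded. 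After inserting these into the velocity bound and adding the $\nabla a$ bound, one arrives at a linear Gr\"onwall inequality for $X(t)$ with coefficient controlled by $\|\nabla u\|_{L^\infty}+\|\nabla\Pi\|_{B^{s-1}_{p,r}}$ only. This is precisely the point of the third elliptic estimate in Section \ref{s:elliptic}; the $L^p$-based estimate you chose is used by the paper for the existence theory, not for the continuation criterion. Your log-interpolation refinement in the case $s>1+N/p$ is fine once the preceding estimate is repaired.
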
  
\begin{rem}\label{r:BKM}
The above statement has two important consequences:
\begin{itemize}\item
First, as  Condition $(C)$ implies
that $B^{s-1}_{p,r}$ is embedded in $L^\infty,$ 
one can show by means of an easy bootstrap argument
that for data in $B^s_{p,r},$ the lifespan
of a solution in $B^s_{p,r}$ 
is \emph{the same} as the lifespan in $B^{\frac Np+1}_{p,1}$
(which is the larger space in this scale satisfying Condition $(C)$).
\item Second, by combining the previous remark with an induction argument, 
we see that if we start with smooth data $(a_0,u_0)$  such that
the  derivatives 
at any order of $\nabla a_0$ and $u_0$ are in 
 in $L^p$ then we get a  local-in-time  smooth solution with the same properties.
 In addition, as above,  the lifespan for that smooth solution is only determined by  
 the $B^{\frac Np+1}_{p,1}$ regularity. 
This generalizes prior results in the H\"older spaces framework
 in the case of a bounded domain obtained in \cite{BV3}.
\end{itemize}
\end{rem}
In the two-dimensional case, the  assumption that $u_0\in L^2$  is somewhat 
restrictive since if, say,   the initial vorticity is in the Schwartz class then $u_0\in L^2$ implies
that the vorticity has average~$0$ over $\R^N.$
This motivates the following statement which  allows for \emph{any} suitably  smooth 
initial vector-field with compactly supported   vorticity.   
\begin{thm}\label{th:2}
Let $T_0$ be in $]0,\infty[$ and let $(s,p,r)$ satisfy
 Condition~$(C)$ with  $2\leq p\leq 4.$
Let $u_0$ be a divergence-free vector-field with coefficients
in $B^s_{p,r}.$ 
Assume that $\rho_0$ is positive, bounded and bounded away from zero, and 
that $\nabla\rho_0\in B^{s-1}_{p,r}.$  Finally, suppose that the body force $f$ 
has coefficients in $L^1([T_0,T_0];B^s_{p,r})$ and that the potential part $\cQ f$
of $f$ is in $\cC([-T_0,T_0];L^2).$

There exists a time $T\in(0,T_0]$ such that
 System $\eqref{eq:ddeuler}$ supplemented with initial data
$(\rho_0,u_0)$ has a unique local solution $(\rho,u,\nabla\Pi)$ on $[-T,T]\times\R^N$ with:
\begin{itemize}
\item $\rho^{\pm1}\in\cC_b([-T,T]\times\R^N),\quad
D\rho\in\cC_w([-T,T];B^{s-1}_{p,r}),$
\item $u\in\cC_w([-T,T];B^s_{p,r}),$
\item $\nabla\Pi\in\cC([-T,T];L^2)\cap L^1([-T,T];B^s_{p,r}).$
\end{itemize}
Besides, 
 time continuity holds with respect to the strong topology, if $r<\infty,$
 and the continuation criteria stated in Theorem $\ref{th:BKM}$ also hold under the above  assumptions.
\end{thm}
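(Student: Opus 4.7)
The plan is to reproduce the proof of Theorem~\ref{th:main} step by step, the only genuine modification being the derivation of the $L^2$ bound on $\nabla\Pi,$ which there relied on the finite energy assumption $u_0\in L^2.$ The transport estimates for $\rho$ and $u$ in $B^s_{p,r},$ the commutator estimates, the high-frequency elliptic bound of Section~\ref{s:elliptic} giving $\nabla\Pi\in L^1_T(B^s_{p,r}),$ and the compactness argument for passage to the limit in a Friedrichs mollification scheme all apply without change; what must be revisited is the low-frequency control of $\nabla\Pi.$

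The key observation is that Condition~$(C)$ yields the embeddings $B^s_{p,r}\hookrightarrow L^\infty\cap L^p$ and $B^{s-1}_{p,r}\hookrightarrow L^\infty\cap L^p,$ so that both $u$ and $\nabla u$ lie in $L^p\cap L^\infty.$ The restriction $p\leq4$ then gives, by interpolation,
$$
\|u\|_{L^4}+\|\nabla u\|_{L^4}\leq C\bigl(\|u\|_{B^s_{p,r}}+\|\nabla u\|_{B^{s-1}_{p,r}}\bigr),
$$
hence $u\nabla u\in L^2$ by H\"older. Using $\div u=0$ to rewrite $u\cdot\nabla u=\div(u\otimes u),$ a direct Riesz transform computation gives $\cQ(u\cdot\nabla u)\in L^2$ with
$$
\|\cQ(u\cdot\nabla u)\|_{L^2}\leq C\|u\|_{L^4}\|\nabla u\|_{L^4}.
$$
Combined with the hypothesis $\cQ f\in L^2,$ this makes $\cQ G$ an $L^2$ vector field when $G:=\cQ f-u\cdot\nabla u.$ Applying the Lax--Milgram framework of Section~\ref{s:elliptic} to the variational formulation of $\div(a\nabla\Pi)=\div G$ then yields
$$
\|\nabla\Pi\|_{L^2}\leq\frac{C}{a_*}\|\cQ G\|_{L^2}\leq C\bigl(\|\cQ f\|_{L^2}+\|u\|_{L^4}\|\nabla u\|_{L^4}\bigr),
$$
which substitutes for the energy identity~\eqref{eq:energy} used in Theorem~\ref{th:main}.

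Armed with this bound, the Friedrichs mollification scheme produces smooth approximations $(\rho_n,u_n,\nabla\Pi_n)$ with uniform bounds in $B^s_{p,r}$ for $(\rho_n,u_n)$ and in $L^\infty_T(L^2)\cap L^1_T(B^s_{p,r})$ for $\nabla\Pi_n,$ and a standard compactness argument then yields existence. Uniqueness follows by estimating the difference of two solutions with $\delta u\in L^\infty_T(L^4)$ and $\delta\nabla\Pi\in L^1_T(L^2),$ the Gronwall loop being closed by the same Riesz transform identity. The continuation criterion of Theorem~\ref{th:BKM} extends without further idea: under~\eqref{eq:bu0}, integrability of $\|\nabla u\|_{L^\infty}$ together with direct transport arguments keep $\|u\|_{L^\infty}$ and $\|u\|_{L^p}$ bounded, hence so are $\|u\|_{L^4}$ and $\|\nabla u\|_{L^4},$ and consequently $\|\nabla\Pi\|_{L^2}.$ The main obstacle is precisely the $L^2$ pressure bound above: the restriction $p\leq4$ is what yields $u\nabla u\in L^2,$ while $p\geq2$ keeps the Lebesgue scale on which $u$ and $\nabla u$ live above the $L^2$ scale used in the elliptic estimate, so that the Besov and $L^2$ bounds can be propagated together in a single Gronwall scheme.
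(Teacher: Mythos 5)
Your proposal pins down exactly the observation on which the paper's proof of Theorem~\ref{th:2} turns: for $2\leq p\leq 4$, Condition~$(C)$ gives $B^s_{p,r}\hookrightarrow W^{1,4}$, so $u\cdot\nabla u\in L^2$ by H\"older's inequality, and combined with $\cQ f\in\cC([-T_0,T_0];L^2)$ the Lax--Milgram estimate (Lemma~\ref{l:laxmilgram}) still controls $\nabla\Pi$ in $L^2$ even though $u$ itself is no longer in $L^2$. That is indeed the only genuinely new ingredient compared to Theorem~\ref{th:main}. However, two of your steps are imprecise or conflated with a different regime of the paper.

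First, the existence construction. You invoke ``the Friedrichs mollification scheme,'' but for $p\geq 2$ the paper simply re-runs the iterative scheme of Section~\ref{ss:main} for System~\eqref{eq:ddeuler1}, with the energy inequalities replaced by the $L^2$ pressure estimate just described. Friedrichs mollification is what the paper uses only in the case $p<2$ (Section~5.3). If you iterate \eqref{eq:ddeuler1}, the iterates $u^n$ are \emph{not} divergence-free, so you must eventually recover $\div u=0$ through Lemma~\ref{l:equiv}; that lemma asks for $\cQ u\in\cC^1([0,T];L^2)$, which is again a consequence of $u\cdot\nabla u+a\nabla\Pi\in\cC([0,T];L^2)$ via the same $W^{1,4}$ embedding. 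This step is absent from your write-up and is necessary for the iterative route.

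Second, the uniqueness. You write ``estimating the difference of two solutions with $\delta u\in L^\infty_T(L^4)$ and $\delta\nabla\Pi\in L^1_T(L^2)$.'' The paper's argument is different in a material way: one first observes that for any solution with the stated regularity, $\d_tu-f=-(u\cdot\nabla u+a\nabla\Pi)\in\cC([0,T];L^2)$ and $\d_t\rho=-u\cdot\nabla\rho\in\cC([0,T];L^2)$ (both thanks to the $L^4$ embedding), so that $\delta u$ and $\delta\rho$ are in $\cC^1([0,T];L^2)$, and one then applies the $L^2$ stability estimate of Proposition~\ref{p:uniqueness}. The Gronwall loop is run in $L^2$, not $L^4$; what the $L^4$ bounds supply is the integrability needed to \emph{place} the differences in $\cC^1([0,T];L^2)$, not a new norm for the stability estimate. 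As stated, your version suggests a $W^{1,4}$-type stability argument, which is not what is used (and indeed an $L^p$ stability estimate for the pressure with $p\neq2$ would require smallness of $a-\ov a$, as in Theorem~\ref{th:3}, which is not assumed here).

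So: correct key idea and essentially the paper's route, but you should replace ``Friedrichs mollification'' by the iteration scheme and Lemma~\ref{l:equiv}, and rephrase the uniqueness step as an $L^2$ stability argument applied after verifying $\delta u,\delta\rho\in\cC^1([0,T];L^2)$.
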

As regards the well-posedness theory, 
the study of the limit case $p=\infty$ is of interest for different reasons. 
First, the Besov space $B^1_{\infty,1}$ is the largest one for which 
Condition (C) holds. Second, the usual H\"older spaces
belong to the family $B^s_{\infty,r}$ (take $r=\infty$) and 
are suitable for the study of the  
 propagation of tangential regularity in \eqref{eq:ddeuler}, 
and of vortex patches than we plan to do in future works.
 \begin{thm}\label{th:3}
Assume that $u_0\in B^s_{\infty,r}\cap L^p,$
$f\in L^1([-T_0,T_0];B^s_{\infty,r}\cap L^p)$
and $\rho_0\in B^{s}_{\infty,r}$ for  some $p\in(1,\infty)$
and some $s>1$ (or $s\geq1$ if $r=1$).  
There exists a constant $\alpha>0$ depending only on $s$ and $N$ such that
if, for some  positive 
real number $\ov\rho$  we have 
\begin{equation}\label{eq:smallrho}
\|\rho_0-\ov\rho\|_{B^s_{\infty,r}}\leq\alpha\ov\rho,
\end{equation}
then there exists some $T>0$ such that
System $\eqref{eq:ddeuler}$ has a unique solution
$(\rho,u,\nabla\Pi)$ with 
\begin{itemize}
\item $\rho\in\cC([-T,T];B^s_{\infty,r})$
(or $\cC_w([-T,T];B^s_{\infty,r})$ if $r=\infty$),
\item $u\in\cC([-T,T];L^p\cap B^s_{\infty,r}),$
(or $u\in\cC([-T,T];L^p)\cap\cC_w([-T,T];B^s_{\infty,r})$ if $r=\infty$),
\item $\nabla\Pi\in L^1([-T,T];B^s_{\infty,r}).$
\end{itemize}
\end{thm}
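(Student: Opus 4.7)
The plan is to construct a solution by a standard iterative/Friedrichs approximation scheme. At step $n$, given $(\rho^n, u^n)$, one defines $\rho^{n+1}$ via the linear transport equation $\d_t\rho^{n+1} + u^n \cdot \nabla \rho^{n+1} = 0$ with datum $\rho_0$, and then $(u^{n+1}, \nabla\Pi^{n+1})$ from the incompressible linearized momentum equation with $\rho^{n+1}$ prescribed as density, i.e. $\d_t u^{n+1} + u^n\cdot\nabla u^{n+1} + (\rho^{n+1})^{-1}\nabla\Pi^{n+1} = f$ with $\div u^{n+1}=0$ and datum $u_0$. A Friedrichs cut-off in frequency applied to the data and forcing ensures well-definedness at each step.

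The heart of the proof is to close uniform bounds on a small interval $[-T,T]$. Two ingredients are crucial. First, the Besov transport estimate applied to $\rho^{n+1} - \ov\rho$ shows that for $T$ so small that $\int_{-T}^T \|\nabla u^n\|_{B^{s-1}_{\infty,r}\cap L^\infty}\,d\tau$ is tiny, the smallness of \eqref{eq:smallrho} propagates into $\|\rho^{n+1}(t)-\ov\rho\|_{B^s_{\infty,r}} \leq 2\alpha\ov\rho$ for all $t \in [-T,T]$. Second, setting $\ov a := 1/\ov\rho$ and $a^{n+1} := (\rho^{n+1})^{-1}$, the pressure equation may be rewritten
$$\ov a\, \Delta\Pi^{n+1} = \div\bigl((\ov a - a^{n+1})\nabla\Pi^{n+1}\bigr) + \div\bigl(f - u^n \cdot \nabla u^{n+1}\bigr).$$
Combining the $L^p$ Calder\'on-Zygmund bounds for the Riesz transforms with the Besov elliptic estimate from Section~\ref{s:elliptic} yields a control of $\|\nabla\Pi^{n+1}\|_{L^p \cap B^s_{\infty,r}}$ in which the first right-hand side term is absorbed into the left-hand side thanks to the smallness of $\ov a - a^{n+1}$. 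Plugging this into the standard transport estimate for $u^{n+1}$ in $B^s_{\infty,r}$ together with an energy-type $L^p$ estimate, these bounds close via Gronwall for $T$ small depending only on the initial norms and $\alpha$.

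Convergence of $(\rho^n, u^n, \nabla\Pi^n)$ is then obtained in a lower-regularity norm. Setting $\dr^n := \rho^{n+1}-\rho^n$, $\du^n := u^{n+1}-u^n$ and $\dPi^n := \Pi^{n+1}-\Pi^n$ and subtracting the equations, the uniform bounds together with the smallness of $a^{n+1} - \ov a$ yield an estimate of the form
$$\|(\dr^n, \du^n)\|_{L^\infty_T(L^\infty \times L^p)} \leq \tfrac12 \|(\dr^{n-1}, \du^{n-1})\|_{L^\infty_T(L^\infty \times L^p)}$$
provided $T$ is small enough. This produces a limit $(\rho,u)$ in $\cC([-T,T]; L^\infty \times L^p)$, and interpolating with the uniform $B^s_{\infty,r}$ bound gives enough convergence to pass to the limit in the nonlinear terms and in the elliptic pressure equation. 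Weak-$*$ compactness then yields $\rho, u \in L^\infty([-T,T]; B^s_{\infty,r})$; strong time continuity in $L^p$ follows directly from the equations, and when $r<\infty$ one upgrades to strong continuity in $B^s_{\infty,r}$ via the classical interpolation argument for transport equations. Uniqueness is obtained by applying the same stability estimate to two putative solutions.

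The main obstacle is the elliptic step: no Calder\'on-Zygmund-type solvability is available in an $L^\infty$-based setting when the coefficient $a$ varies, and the $B^s_{\infty,r}$ estimate for $\div(a\nabla\Pi) = \div F$ proved in Section~\ref{s:elliptic} requires $a$ to be close to a constant. The smallness condition \eqref{eq:smallrho} is precisely what makes the perturbative absorption work, and is therefore unavoidable in this $L^\infty$-based framework, in contrast with the $L^p$-based results of Theorems~\ref{th:main} and \ref{th:2}.
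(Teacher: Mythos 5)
Your proposal captures the crucial idea underlying the paper's proof: rewriting the pressure equation as
$$\ov a\,\Delta\Pi=\div F+\div\bigl((\ov a-a)\nabla\Pi\bigr),\qquad \ov a:=1/\ov\rho,$$
absorbing the perturbative term thanks to the smallness assumption \eqref{eq:smallrho}, and then combining an $L^p$ Calder\'on--Zygmund bound for the low frequencies of $\nabla\Pi$ with the fact that $\nabla(-\Delta)^{-1}({\rm Id}-\Delta_{-1})$ maps $B^{s-1}_{\infty,r}$ into $B^s_{\infty,r}$ for the high frequencies. You also correctly identify that uniqueness and convergence should be run on $L^p$-type stability estimates rather than the $L^2$ ones of Proposition~\ref{p:uniqueness}, since $\nabla\Pi$ need not be $L^2$ here. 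These are exactly the points the paper emphasises.

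Where you depart from the paper is the existence construction, and there are two issues worth flagging. First, the paper does \emph{not} run a direct iterative scheme with the incompressibility constraint enforced at each step. Instead, it regularises the data by $\rho_0^n:=\ov\rho+S_n\bigl(\phi(n^{-1}\cdot)(\rho_0-\ov\rho)\bigr)$, $u_0^n:=S_n(\phi(n^{-1}\cdot)u_0)$, and so on, so as to place them in the Sobolev framework, then invokes Theorem~\ref{th:main} (with $p=2$) to produce a sequence of $H^\infty$ solutions on a fixed interval, and only then uses the a priori estimates to obtain uniform $B^s_{\infty,r}$ bounds. In your scheme, because $\div u^{n+1}=0$ is enforced and the pressure is $\nabla\Pi^{n+1}$ (not $\nabla\Pi^n$), the velocity and pressure at step $n+1$ are genuinely coupled through the variable-coefficient elliptic problem $\div(a^{n+1}\nabla\Pi^{n+1})=\div(f-u^n\cdot\nabla u^{n+1})$; this is essentially a weighted Leray projection, whose well-posedness in the $B^s_{\infty,r}$ framework is itself only available through the smallness assumption. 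A Friedrichs cut-off makes the linear step solvable, as you say, but the resulting argument is technically heavier than the decoupled transport-then-elliptic scheme used inside Theorem~\ref{th:main}, where $u^{n+1}$ solves a plain transport equation with known source $-a^{n+1}\nabla\Pi^n+f$. The paper's data-smoothing route sidesteps this coupling entirely.

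Second, a concrete technical slip: your contraction estimate is claimed in $L^\infty_T(L^\infty\times L^p)$, with the density difference measured in $L^\infty$. That norm does not close. Indeed $\dr^n$ solves a transport equation with source $-\du^{n-1}\cdot\nabla\rho^n$, and $\du^{n-1}\in L^p$ with $\nabla\rho^n\in L^\infty$ only yields an $L^p$ bound on the source, not an $L^\infty$ one. The paper therefore controls \emph{both} $\da$ and $\du$ in $L^p$ (see the stability Proposition at the beginning of Section~\ref{s:th:3}); this is fine in your iteration since $\dr^n(0)=0$, and the cross terms $\da\nabla\Pi_1$, $\du\cdot\nabla a_1$, etc.\ all land in $L^p$ because the \emph{coefficients} $\nabla\Pi_1$, $\nabla a_1$, $\nabla u_i$ are bounded. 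With this correction your contraction/Gronwall argument goes through as intended.
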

Note that our result holds for small perturbations of a constant density state only.
The reason why is that, in contrast with the previous statements, 
here bounding the pressure relies on estimates for the ordinary  Laplace operator 
$\Delta.$ In other words, the heterogeneity $(a-\ov a)\nabla\Pi$ is treated
as a small perturbation term. 
We expect this smallness assumption to be just a technical artifact. However, 
removing it goes beyond the scope of this paper. 


\section{Tools}\label{s:tools}

Our results mostly rely on the use of a nonhomogeneous dyadic partition of unity 
with respect to the Fourier variable, the so-called Littlewood-Paley decomposition. 
More precisely, 
fix a smooth radial function
$\chi$ supported in (say) the ball $B(0,\frac43),$ 
equals to $1$ in a neighborhood of $B(0,\frac34)$
and such that $r\mapsto\chi(r\,e_r)$ is nondecreasing
over $\R^+,$ and set
$\varphi(\xi)=\chi(\frac\xi2)-\chi(\xi).$
\smallbreak
The {\it dyadic blocks} $(\Delta_q)_{q\in\Z}$
 are defined by\footnote{Throughout we agree  that  $f(D)$ stands for 
the pseudo-differential operator $u\mapsto\cF^{-1}(f\cF u).$} 
$$
\dq:=0\ \hbox{ if }\ q\leq-2,\quad\Delta_{-1}:=\chi(D)\quad\hbox{and}\quad
\Delta_q:=\varphi(2^{-q}D)\ \text{ if }\  q\geq0.
$$
We  also introduce the following low frequency cut-off:
$$
S_qu:=\chi(2^{-q}D)=\sum_{p\geq q-1}\Delta_p\quad\text{for}\quad q\geq0.
$$
The following classical properties will be used freely throughout in the paper:
\begin{itemize}
\item for any $u\in\cS',$ the equality $u=\sum_{q}\dq u$ makes sense in $\cS'$;
\item for all $u$ and $v$ in $\cS',$
the sequence
$(S_{q-1}u\,\dq v)_{q\in\N}$ is spectrally
supported in dyadic annuli. Indeed,
as $\Supp\chi\subset B(0,\frac43)$ and $\Supp\varphi\subset\{\xi\in\R^n\,/\,\frac34\leq|\xi|\leq\frac83\},$ we have
$$\textstyle{\Supp\bigl({\cF}(S_{q-1}u\,\dq v)\bigr)\subset
\bigl\{\xi\in\R^N\,/\, \frac1{12}\cdot2^q\leq |\xi|\leq 
\frac{10}3\cdot2^q\bigr\}.}$$
\end{itemize}
One can now define what a Besov space $B^s_{p,r}$ is:
\begin{defi}
\label{def:besov}
  Let  $u$ be a tempered distribution, $s$ a real number, and 
$1\leq p,r\leq\infty.$ We set
$$
\|u\|_{B^s_{p,r}}:=\bigg(\sum_{q} 2^{rqs}
\|\Delta_q  u\|^r_{L^p}\bigg)^{\frac{1}{r}}\ \text{ if }\ r<\infty
\quad\text{and}\quad
\|u\|_{B^s_{p,\infty}}:=\sup_{q} 2^{qs}
\|\Delta_q  u\|_{L^p}.
$$
We then define the space $B^s_{p,r}$ as  the
subset of  distributions $u\in {\cS}'$ such  that
$\|u\|_{B^s_{p,r}}$ is finite.
\end{defi}
The Besov spaces have many interesting properties which will be recalled throughout the paper
whenever they are needed.
For the time being, let us just recall that if Condition $(C)$ holds true
then $B^s_{p,r}$ is an algebra continuously embedded in the set $C^{0,1}$
of bounded Lipschitz functions (see e.g. \cite{BCD}, Chap. 2), and that the gradient operator
maps $B^s_{p,r}$ in $B^{s-1}_{p,r}.$
The following result will be also needed:
\begin{prop}\label{p:CZ}
Let $F$ be a smooth homogeneous function of degree $0$ on $\R^N\setminus\{0\}.$
Then for all $p\in(1,\infty),$ Operator $F(D)$ is a self-map on  $L^p.$
 In addition, if $r\in[1,\infty]$ and $s\in\R$  then $F(D)$
is a self-map on $B^s_{p,r}.$ 
 \end{prop}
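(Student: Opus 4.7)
\emph{Proof plan.} My plan is to handle the $L^p$ claim first by a Mikhlin multiplier theorem, then to bootstrap it to a dyadic-block estimate in the Besov setting. The key structural input I would use is that the degree-$0$ homogeneity of $F$ gives $|\partial^\alpha F(\xi)|\leq C_\alpha|\xi|^{-|\alpha|}$ for every multi-index $\alpha$ and every $\xi\neq0$. This is precisely the Mikhlin--H\"ormander symbol condition, so the classical multiplier theorem (or equivalently a Calder\'on--Zygmund argument applied to the associated convolution kernel) yields boundedness of $F(D)$ on $L^p$ for every $1<p<\infty$.

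For the Besov estimate I would proceed block by block, aiming at $\|\dq F(D)u\|_{L^p}\leq C\|\dq u\|_{L^p}$ with $C$ uniform in $q$. Since $F(D)$ and $\dq$ are both Fourier multipliers they commute, so $\dq F(D)u=F(D)\dq u$ for every $q\geq-1$. I would fix an auxiliary $\tilde\varphi\in\cC^\infty_c(\R^N\setminus\{0\})$ supported in a slightly enlarged annulus around $\Supp\varphi$ and equal to $1$ there, so that the identity $\tilde\varphi(2^{-q}D)\dq=\dq$ holds for all $q\geq0$. Then
$$
F(D)\dq u=F(D)\,\tilde\varphi(2^{-q}D)\,\dq u,
$$
and the crucial point is that by the degree-$0$ homogeneity of $F$,
$$
F(\xi)\,\tilde\varphi(2^{-q}\xi)=(F\tilde\varphi)(2^{-q}\xi),
$$
so the convolution kernel of $F(D)\tilde\varphi(2^{-q}D)$ is $2^{qN}h(2^q\,\cdot)$ with $h:=\cF^{-1}(F\tilde\varphi)\in\cS$. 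Its $L^1$ norm is therefore $\|h\|_{L^1}$, independently of $q$, and Young's inequality gives the desired uniform bound. For the low-frequency block $q=-1$ the symbol $F\chi$ is no longer smooth at the origin, so the rescaling trick breaks down; instead I would fall back on the $L^p$ boundedness already established and commute to obtain $\|\Delta_{-1}F(D)u\|_{L^p}=\|F(D)\Delta_{-1}u\|_{L^p}\leq C\|\Delta_{-1}u\|_{L^p}$. Summing in $q$ against the weights $2^{qrs}$, or taking the supremum if $r=\infty$, yields $\|F(D)u\|_{B^s_{p,r}}\leq C\|u\|_{B^s_{p,r}}$.

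The only step I expect to require real care is the dyadic-block identity $F(\xi)\tilde\varphi(2^{-q}\xi)=(F\tilde\varphi)(2^{-q}\xi)$, which genuinely uses the degree-$0$ scaling of $F$ and is precisely what produces a $q$-independent kernel after rescaling; the choice of $\tilde\varphi$ slightly larger than $\Supp\varphi$ is what legitimates the idempotency $\tilde\varphi(2^{-q}D)\dq=\dq$. The low-frequency block must genuinely be handled separately via the preceding $L^p$ bound because the symbol loses smoothness at the origin and the rescaling kernel argument fails there. Beyond these two points, the argument is routine Littlewood--Paley bookkeeping.
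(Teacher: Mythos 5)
Your proof is correct and follows essentially the same route as the paper: Mikhlin--H\"ormander for the $L^p$ claim, the degree-$0$ rescaling identity $F(\xi)\tilde\varphi(2^{-q}\xi)=(F\tilde\varphi)(2^{-q}\xi)$ combined with a convolution/Young bound for the dyadic blocks $q\geq0$, and the $L^p$ estimate to handle the $\Delta_{-1}$ block. The only cosmetic difference is that you phrase the argument block by block after commuting $F(D)$ past $\dq$, whereas the paper writes the decomposition $F(D)u=F(D)\Delta_{-1}u+\sum_{q\geq0}(F\tilde\varphi)(2^{-q}D)\dq u$ directly.
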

 \begin{proof}
 The continuity on $L^p$ stems from the H\"ormander-Mihlin theorem (see e.g. \cite{grafakos}).
 The rest of the proposition follows from the fact that if $u\in B^s_{p,r}$ then one may write, owing
 to $F(2^{-q}\xi)=F(\xi)$ for all $q\geq0$ and $\xi\not=0,$
 $$
 F(D)u=F(D)\Delta_{-1}u+\sum_{q\geq0}(F\tilde\varphi)(2^{-q}D)\dq u$$
 where $\tilde\varphi$ is a smooth function with compact support away from the origin
 and value $1$ on the support of $\varphi.$ Note that ${\cF}^{-1}(F\tilde\varphi)$
 is in $L^1.$ 
 Therefore, the standard convolution inequality implies that 
 $$
 \|(F\tilde\varphi)(2^{-q}D)\dq u\|_{L^p}\leq C\|\dq u\|_{L^p}
 $$
 while the $L^p$ continuity result implies that 
 $$
 \|F(D)\Delta_{-1}u\|_{L^p}\leq \|\Delta_{-1}u\|_{L^p}.
 $$
 Putting these two results together entails that $F(D)$ maps $B^s_{p,r}$ in itself.
  \end{proof}
  \begin{rem}\label{r:CZ}
  Both  the Leray projector $\cP$ over divergence free vector-fields
  and $\cQ:={\rm Id}-\cP$  satisfy the assumptions of the above proposition.
 Indeed, in Fourier variables, we have for all vector-field $u$ with coefficients in $\cS'(\R^N),$
 $$
 \hat{\cQ u}(\xi)=-\frac{\xi}{|\xi|^2}\,\xi\cdot\hat u(\xi).
 $$
  \end{rem}
  The following lemma (referred in what follows as \emph{Bernstein's inequalities})
  describe the way derivatives act on spectrally localized functions.
  \begin{lem}
\label{lpfond}
{\sl
Let  $0<r<R.$   A
constant~$C$ exists so that, for any nonnegative integer~$k$, any couple~$(p,q)$ 
in~$[1,\infty]^2$ with  $q\geq p\geq 1$ 
and any function $u$ of~$L^p$,  we  have for all $\lambda>0,$
$$
\displaylines{
{\rm Supp}\, \widehat u \subset   B(0,\lambda R)
\Longrightarrow
\|D^k u\|_{L^q} \leq
 C^{k+1}\lambda^{k+N(\frac{1}{p}-\frac{1}{q})}\|u\|_{L^p};\cr
{\rm Supp}\, \widehat u \subset \{\xi\in\R^N\,/\, r\lambda\leq|\xi|\leq R\lambda\}
\Longrightarrow C^{-k-1}\lambda^k\|u\|_{L^p}
\leq
\|D^k u\|_{L^p}
\leq
C^{k+1}  \lambda^k\|u\|_{L^p}.
}$$
}
\end{lem}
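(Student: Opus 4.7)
The plan is to deduce both inequalities from Young's convolution inequality by exploiting the spectral localization of $u$. The starting point in both cases is that since $\Supp\hat u$ lies in a fixed dilation of a ball (resp.\ annulus), one may insert a smooth Fourier cutoff of matching shape without altering $u$.

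For the ball-support inequality, I would fix once and for all a function $\phi\in\cC^\infty_c(\R^N)$ with $\phi\equiv1$ on $B(0,R)$ and set $h:=\cF^{-1}\phi\in\cS(\R^N)$. If $\Supp\hat u\subset B(0,\lambda R)$, then $\hat u(\xi)=\phi(\lambda^{-1}\xi)\hat u(\xi)$, so $u=h_\lambda\ast u$ with $h_\lambda(x):=\lambda^Nh(\lambda x)$. Differentiating under the convolution and applying Young's inequality with the exponent $m\in[1,\infty]$ determined by $1+1/q=1/p+1/m$ yields
$$
\|D^ku\|_{L^q}\leq\|D^kh_\lambda\|_{L^m}\|u\|_{L^p}.
$$
A straightforward change of variables gives $\|D^kh_\lambda\|_{L^m}=\lambda^{k+N(1/p-1/q)}\|D^kh\|_{L^m}$, and since $h$ has Fourier support in the fixed ball $B(0,2R)$, elementary Fourier-side estimates (e.g.\ interpolating $\|D^kh\|_{L^\infty}\leq C(2R)^k\|\phi\|_{L^1}$ against the $L^1$ bound obtained by weighting with $(1+|x|^2)^M$) yield $\|D^kh\|_{L^m}\leq C^{k+1}$ with $C$ independent of $k$.

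For the annulus-support statement, the upper bound $\|D^ku\|_{L^p}\leq C^{k+1}\lambda^k\|u\|_{L^p}$ is just the first inequality with $p=q$, applied with a cutoff equal to $1$ on any ball containing the annulus. The reverse inequality is the genuine new content and requires inverting $k$-th order differentiation on the spectrum of $u$. I would choose $\psi\in\cC^\infty_c(\R^N\setminus\{0\})$ with $\psi\equiv1$ on $\{r\leq|\xi|\leq R\}$ and use the multinomial identity $|\xi|^{2k}=\sum_{|\alpha|=k}\binom{k}{\alpha}\xi^{2\alpha}$ to factor
$$
\psi(\xi)=\sum_{|\alpha|=k}(i\xi)^\alpha g_\alpha(\xi),\qquad g_\alpha(\xi):=\binom{k}{\alpha}\frac{(-i\xi)^\alpha\psi(\xi)}{|\xi|^{2k}},
$$
each $g_\alpha$ being smooth and compactly supported away from the origin thanks to the factor $\psi$. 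Rescaling $\xi\mapsto\lambda^{-1}\xi$ and using $\hat u=\psi(\lambda^{-1}\cdot)\hat u$ produces
$$
u=\lambda^{-k}\sum_{|\alpha|=k}\tilde h_\lambda^\alpha\ast\partial^\alpha u,\qquad\tilde h_\lambda^\alpha(x):=\lambda^N(\cF^{-1}g_\alpha)(\lambda x),
$$
after which Young's inequality with $m=1$, together with the scale-invariance $\|\tilde h_\lambda^\alpha\|_{L^1}=\|\cF^{-1}g_\alpha\|_{L^1}$, delivers $\|u\|_{L^p}\leq C^{k+1}\lambda^{-k}\|D^ku\|_{L^p}$.

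The only genuinely nontrivial ingredient is the construction of the inverse kernels $g_\alpha$: one needs smooth symbols supported away from the origin that collectively invert $k$-th order differentiation, and it is precisely the support condition on $\psi$ that allows the division by $|\xi|^{2k}$ without destroying smoothness. Tracking the constant as $C^{k+1}$ is routine bookkeeping: the combinatorial factor $\sum_{|\alpha|=k}\binom{k}{\alpha}=N^k$ and the polynomial prefactors coming from differentiating $\xi^\alpha\psi/|\xi|^{2k}$ are absorbed by taking $C$ large enough.
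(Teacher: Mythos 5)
Your proof is correct and follows the standard textbook route for Bernstein's inequalities (convolution against a rescaled Fourier cutoff, Young's inequality, and for the reverse bound the multinomial factorization $|\xi|^{2k}=\sum_{|\alpha|=k}\binom{k}{\alpha}\xi^{2\alpha}$ to build smooth inverse symbols away from the origin). The paper itself states Lemma \ref{lpfond} without proof, deferring to \cite{BCD}, whose argument is the same as yours; the only points worth spelling out more carefully if you were writing this up are the uniform-in-$m$ bound $\|D^k h\|_{L^m}\leq C^{k+1}$ (obtained by bounding $\|(1+|x|^2)^M D^k h\|_{L^\infty}$ via $(1-\Delta)^M$ applied on the Fourier side, with the polynomial factors $k^{2M}$ absorbed into $C^k$) and the analogous weighted $L^1$ estimate for $\cF^{-1}g_\alpha$, but you have correctly identified these as routine bookkeeping.
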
   
  The first Bernstein inequality entails the following embedding result:
  \begin{prop}\label{p:embed}
  The space $B^{s_1}_{p_1,r}$ is embedded in the space $B^{s_2}_{p_2,r}$ whenever
  $$
  1\leq p_1\leq p_2\leq\infty\quad\hbox{and}\quad
  s_2\leq s_1-N/p_1+N/p_2. 
  $$
  \end{prop}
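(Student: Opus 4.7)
The plan is to prove the embedding by a block-by-block estimate based on Bernstein's inequalities, and then to sum in $\ell^r$. The key point is that each dyadic block $\Delta_q u$ is spectrally localized, so we can upgrade its integrability from $L^{p_1}$ to $L^{p_2}$ at the cost of a power of $2^q$, and this loss is exactly compensated by the gain in regularity $s_1 - s_2 \geq N(1/p_1 - 1/p_2)$.

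More precisely, I would split off the low-frequency block. For $q \geq 0$, the support of $\widehat{\Delta_q u}$ is contained in a dyadic annulus $\{|\xi| \simeq 2^q\}$, so the first Bernstein inequality (with $k=0$) gives
$$
\|\Delta_q u\|_{L^{p_2}}\leq C\,2^{qN(1/p_1 - 1/p_2)}\|\Delta_q u\|_{L^{p_1}}.
$$
Multiplying by $2^{qs_2}$ and using that $s_2 + N(1/p_1-1/p_2) \leq s_1$ together with $q\geq 0$ yields
$$
2^{qs_2}\|\Delta_q u\|_{L^{p_2}}\leq C\,2^{q s_1}\|\Delta_q u\|_{L^{p_1}}.
$$
For the low-frequency term $q=-1$, the spectrum of $\Delta_{-1}u$ lies in a fixed ball, so the same Bernstein inequality yields $\|\Delta_{-1}u\|_{L^{p_2}}\leq C\|\Delta_{-1}u\|_{L^{p_1}}$, and since $s_1\geq s_2$ (an immediate consequence of the hypothesis and $p_1\leq p_2$), we also get $2^{-s_2}\|\Delta_{-1}u\|_{L^{p_2}}\leq C\,2^{-s_1}\|\Delta_{-1}u\|_{L^{p_1}}$ up to an absolute constant.

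Taking the $\ell^r(\Z)$ norm in $q$ of both sides and applying the definition of the Besov norm yields
$$
\|u\|_{B^{s_2}_{p_2,r}}\leq C\,\|u\|_{B^{s_1}_{p_1,r}},
$$
with the obvious modification (supremum in $q$) when $r=\infty$. The only subtlety is the asymmetric treatment of the block $q=-1$, whose frequency is bounded below away from zero by a fixed constant rather than by $2^{-1}$, but this is harmless since the resulting discrepancy is absorbed into the multiplicative constant. No obstacle beyond bookkeeping is expected; this is a direct consequence of Bernstein's first inequality.
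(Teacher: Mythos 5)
Your proof is correct and matches the paper's intended argument exactly: the paper introduces Proposition \ref{p:embed} with the single remark ``The first Bernstein inequality entails the following embedding result,'' and your block-by-block Bernstein estimate followed by taking $\ell^r$ norms is precisely the standard way to fill in that claim. The only tiny imprecision is in the treatment of the $q=-1$ block: the factor $2^{s_1-s_2}$ you absorb is a constant depending on $s_1,s_2$ rather than an absolute one, but since the embedding constant is allowed to depend on the regularity parameters this is immaterial.
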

  \begin{rem}\label{r:embed}
  Recall that for all $s\in\R,$ the Besov space $B^s_{2,2}$ coincides
  with the nonhomogeneous Sobolev space $H^s.$
  Furthermore if, for $k\in\N,$  we  denote by $W^{k,p}$ the set of $L^p$ functions
 with derivatives up to order $k$ in $L^p$ then
 we have the following chain of continuous embedding:
 $$
 B^k_{p,1}\hookrightarrow W^{k,p}\hookrightarrow B^k_{p,\infty}.
 $$
 \end{rem}  
 Let us now recall a few nonlinear estimates in Besov spaces. 
 Formally, any product  of two tempered distributions $u$ and $v,$ may be decomposed
into 
\begin{equation}\label{eq:bony}
uv=T_uv+T_vu+R(u,v)
\end{equation}
with 
$$
T_uv:=\sum_qS_{q-1}u\dq v,\quad
T_vu:=\sum_q S_{q-1}v\dq u\ \hbox{ and }\ 
R(u,v):=\sum_q\sum_{|q'-q|\leq1}\dq u\,\Delta_{q'}v.
$$
The above operator $T$ is called ``paraproduct'' whereas
$R$ is called ``remainder''.
The decomposition \eqref{eq:bony} has been introduced by J.-M. Bony in \cite{Bony}.
We shall sometimes use the notation 
$$
T'_uv:=T_uv+R(u,v).
$$
The paraproduct and remainder operators have many nice continuity properties. 
The following ones will be of constant use in this paper (see the proof in e.g. \cite{BCD}, Chap. 2):
\begin{prop}\label{p:op}
For any $(s,p,r)\in\R\times[1,\infty]^2$ and $t<0,$ the paraproduct operator 
$T$ maps $L^\infty\times B^s_{p,r}$ in $B^s_{p,r},$
and  $B^t_{\infty,\infty}\times B^s_{p,r}$ in $B^{s+t}_{p,r}.$
Moreover, the following estimates hold:
$$
\|T_uv\|_{B^s_{p,r}}\leq C\|u\|_{L^\infty}\|\nabla v\|_{B^{s-1}_{p,r}}\quad\hbox{and}\quad
\|T_uv\|_{B^{s+t}_{p,r}}\leq C\|u\|_{B^t_{\infty,\infty}}\|\nabla v\|_{B^{s-1}_{p,r}}.
$$
For any $(s_1,p_1,r_1)$ and $(s_2,p_2,r_2)$ in $\R\times[1,\infty]^2$ such that 
$s_1+s_2>0,$ $1/p:=1/p_1+1/p_2\leq1$ and $1/r:=1/r_1+1/r_2\leq1$
the remainder operator $R$ maps 
$B^{s_1}_{p_1,r_1}\times B^{s_2}_{p_2,r_2}$ in $B^{s_1+s_2}_{p,r}.$
\end{prop}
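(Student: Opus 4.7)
\medskip
\noindent\emph{Proof strategy.}
The plan is to apply $\Delta_j$ to each of the two operators, exploit the spectral localization of the building blocks $S_{q-1}u\,\Delta_q v$ and $\Delta_q u\,\Delta_{q'}v$ recalled just before the statement, and then close with H\"older's inequality and an $\ell^r$ summation argument.

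For the paraproduct estimates, the first step is to invoke the fact that $S_{q-1}u\,\Delta_q v$ is spectrally supported in a dyadic annulus of size $2^q$, so that $\Delta_j(S_{q-1}u\,\Delta_q v)=0$ as soon as $|q-j|>N_0$ for some fixed $N_0$; this collapses $\Delta_j T_uv$ to a finite sum. H\"older's inequality then gives a pointwise bound of the form $\|S_{q-1}u\|_{L^\infty}\|\Delta_qv\|_{L^p}$. For the first estimate I would simply invoke $\|S_{q-1}u\|_{L^\infty}\leq \|u\|_{L^\infty}$ and rewrite $\|\Delta_q v\|_{L^p}\leq C2^{-q}\|\Delta_q\nabla v\|_{L^p}$ via Bernstein so that $\|\nabla v\|_{B^{s-1}_{p,r}}$ appears on the right. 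For the second, the sign assumption $t<0$ makes the geometric series bounding $\|S_{q-1}u\|_{L^\infty}$ dominated by its top term, hence gains an extra factor $2^{-qt}$ that shifts the output regularity from $s$ to $s+t$. Multiplying by $2^{js}$ or $2^{j(s+t)}$ and taking the $\ell^r$ norm is then routine, as convolution with a compactly supported sequence is continuous on $\ell^r$.

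For the remainder, the decisive geometric difference is that $\Delta_q u\,\Delta_{q'}v$ with $|q-q'|\leq 1$ is supported in a \emph{ball} of radius $\sim 2^q$ rather than in an annulus, so $\Delta_j$ may receive contributions from every $q\geq j-N_0$. H\"older's inequality with $1/p_1+1/p_2=1/p\leq 1$ then yields
$$
\|\Delta_jR(u,v)\|_{L^p}\leq C\sum_{q\geq j-N_0}\|\Delta_q u\|_{L^{p_1}}\|\tilde\Delta_q v\|_{L^{p_2}},
$$
where $\tilde\Delta_q:=\Delta_{q-1}+\Delta_q+\Delta_{q+1}$. Multiplying by $2^{j(s_1+s_2)}$ produces a weight $2^{(j-q)(s_1+s_2)}$ which is summable over $q\geq j-N_0$ exactly when $s_1+s_2>0$. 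Setting $a_q:=2^{qs_1}\|\Delta_q u\|_{L^{p_1}}$ and $b_q:=2^{qs_2}\|\tilde\Delta_q v\|_{L^{p_2}}$, the announced bound will follow from Young's convolution inequality in the variable $j$ together with the discrete H\"older inequality $\ell^{r_1}\times\ell^{r_2}\hookrightarrow\ell^r$, which is available because $1/r_1+1/r_2\leq 1$.

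The main obstacle is precisely this remainder step: because the spectral support is a ball and not an annulus, $j$ may lie arbitrarily far below $q$, and the resulting high-to-low frequency transfer converges only under the hypothesis $s_1+s_2>0$. The paraproduct, by contrast, is unconditionally well-behaved because the annular support forces a near-diagonal sum in $|j-q|$, so that no sign condition on the regularity index is needed and $L^\infty$ (or $B^t_{\infty,\infty}$ with $t<0$) regularity on $u$ is enough.
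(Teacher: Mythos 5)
Your proof is correct and follows the standard Littlewood--Paley argument; the paper itself does not prove this proposition but refers to \cite{BCD}, Chap.\ 2, where precisely this annulus-versus-ball spectral localization, H\"older, and $\ell^r$-convolution scheme is carried out. The only point worth being explicit about is that in $T_uv=\sum_q S_{q-1}u\,\Delta_q v$ the nonzero terms all have $q\geq1$, so the reverse Bernstein inequality $\|\Delta_q v\|_{L^p}\approx 2^{-q}\|\Delta_q\nabla v\|_{L^p}$ is legitimately applied to annulus-supported blocks.
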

Combining the above proposition with Bony's decomposition \eqref{eq:bony}, 
we easily get the following ``tame estimate'':
\begin{cor}\label{c:op}
Let $a$ be a bounded function such that $\nabla a\in B^{s-1}_{p,r}$ for some $s>0$
and $(p,r)\in[1,\infty]^2.$  Then for any $b\in B^s_{p,r}\cap L^\infty$ we have $ab\in B^s_{p,r}\cap L^\infty$
and there exists a constant $C$ depending only on $N,$ $p$ and $s$
such that 
$$
\|ab\|_{B^s_{p,r}}\leq \|a\|_{L^\infty}\|b\|_{B^s_{p,r}}+\|b\|_{L^\infty}\|Da\|_{B^{s-1}_{p,r}}.
$$
\end{cor}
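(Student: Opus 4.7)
The plan is to apply Bony's decomposition \eqref{eq:bony},
$$ab = T_ab + T_ba + R(a,b),$$
and estimate each of the three pieces in $B^s_{p,r}$ via Proposition \ref{p:op}. The boundedness $ab\in L^\infty$ is a freebie from $\|ab\|_{L^\infty}\leq \|a\|_{L^\infty}\|b\|_{L^\infty}$, so the real work is to control the Besov norm.

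For the first paraproduct, I would apply directly the first estimate of Proposition \ref{p:op} with the roles $u=a$, $v=b$: since $a\in L^\infty$ and $\nabla b\in B^{s-1}_{p,r}$ (because $\nabla$ sends $B^s_{p,r}$ into $B^{s-1}_{p,r}$), one gets
$$\|T_ab\|_{B^s_{p,r}}\leq C\|a\|_{L^\infty}\|\nabla b\|_{B^{s-1}_{p,r}}\leq C\|a\|_{L^\infty}\|b\|_{B^s_{p,r}}.$$
For the symmetric paraproduct I reverse the roles: $b\in L^\infty$ and, by hypothesis, $\nabla a\in B^{s-1}_{p,r}$, so the same estimate yields
$$\|T_ba\|_{B^s_{p,r}}\leq C\|b\|_{L^\infty}\|Da\|_{B^{s-1}_{p,r}}.$$
Observe that at this point we have not yet required anything about $a$ beyond $a\in L^\infty$ and $\nabla a\in B^{s-1}_{p,r}$, which is fortunate since in general $a$ itself need not belong to $B^s_{p,r}$ (it need not even lie in $L^p$).

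The main obstacle is the remainder $R(a,b)$, for exactly that reason: the standard remainder bound in Proposition \ref{p:op} would normally be invoked with $a\in B^{s}_{p,r}$, which we do not have. The trick is to place $a$ in the weakest space that still contains $L^\infty$, namely $B^0_{\infty,\infty}$: the standard convolution inequality for the Littlewood-Paley blocks gives $\|\Delta_q a\|_{L^\infty}\leq C\|a\|_{L^\infty}$ uniformly in $q\geq-1$, so $L^\infty\hookrightarrow B^0_{\infty,\infty}$ with $\|a\|_{B^0_{\infty,\infty}}\leq C\|a\|_{L^\infty}$. Then I apply the remainder part of Proposition \ref{p:op} with the triplets $(s_1,p_1,r_1)=(0,\infty,\infty)$ and $(s_2,p_2,r_2)=(s,p,r)$; the compatibility conditions $1/p_1+1/p_2\leq1$ and $1/r_1+1/r_2\leq1$ are trivially satisfied, and crucially the condition $s_1+s_2>0$ reduces to $s>0$, which is precisely the hypothesis. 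This gives
$$\|R(a,b)\|_{B^s_{p,r}}\leq C\|a\|_{B^0_{\infty,\infty}}\|b\|_{B^s_{p,r}}\leq C\|a\|_{L^\infty}\|b\|_{B^s_{p,r}}.$$

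Summing the three estimates produces the stated tame inequality. The positivity of $s$ enters only through the remainder term, and that is the single pressure point in the argument; everything else is a direct application of the paraproduct bounds of Proposition \ref{p:op} and the fact that $\nabla:B^s_{p,r}\to B^{s-1}_{p,r}$.
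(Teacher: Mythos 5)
Your argument is correct and is precisely what the paper intends by the phrase ``Combining the above proposition with Bony's decomposition'': split $ab=T_ab+T_ba+R(a,b)$, use the $L^\infty\times B^s_{p,r}$ paraproduct bound twice, and handle the remainder via $L^\infty\hookrightarrow B^0_{\infty,\infty}$ together with the $B^0_{\infty,\infty}\times B^s_{p,r}\to B^s_{p,r}$ remainder continuity (which is where $s>0$ is used). The only cosmetic remark is that the displayed inequality in the corollary is missing the multiplicative constant $C$ that the preamble announces, so your estimate with the harmless constant is exactly what is meant.
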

The following result pertaining to the composition of functions
in Besov spaces will be needed for estimating the reciprocal  of the density.
\begin{prop}\label{p:comp}
Let $I$ be a bounded interval of $\R$ and $F:I\rightarrow\R$ a smooth function. 
Then for all compact subset $J\subset I,$ $s>0$ and $(p,r)\in[1,\infty]^2$ there exists a constant $C$
such that for all $a\in B^s_{p,r}$ with values in $J,$ we have
$F(a)\in B^s_{p,r}$ and 
$$
\|F(a)\|_{B^s_{p,r}}\leq C\|a\|_{B^s_{p,r}}.
$$
\end{prop}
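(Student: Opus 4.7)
My plan is to prove the proposition by a paralinearization of the composition $F\circ a$. After a preliminary reduction, I may assume that $F$ extends smoothly to $\R$ with all derivatives bounded, the relevant bounds depending only on $\|F^{(k)}\|_{L^\infty(J')}$ over a slightly enlarged compact $J'\subset I$ that contains the ranges of all the band-limited averages $S_q a + t\,\Delta_q a$; if $p<\infty$, I further subtract the constant $F(0)$ so that the low-frequency part stays in $L^p$. Under these reductions it suffices to estimate $F(a)-F(0)$.

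The first step is a telescoping decomposition. Using $S_{q+1}a - S_q a = \Delta_q a$ for $q\geq 0$, $S_0 a=\Delta_{-1}a$, $S_{-1}a:=0$, and the pointwise convergence $S_q a\to a$, I write
$$
F(a) - F(0) \;=\; \sum_{q\geq -1} m_q\,\Delta_q a, \qquad
m_q \;:=\; \int_0^1 F'\bigl(S_q a + t\,\Delta_q a\bigr)\,dt.
$$
The function $b_q(t):=S_q a + t\,\Delta_q a$ is bounded uniformly in $(q,t)$ (since $a$ is valued in $J$), so each $m_q$ is bounded; more importantly, $b_q(t)$ is spectrally supported in a ball of radius $C\cdot 2^q$. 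Hence Lemma~\ref{lpfond} (Bernstein) gives $\|\nabla^k b_q(t)\|_{L^\infty}\leq C_k\,2^{qk}$ for every $k\geq 0$. Applying the Fa\`a di Bruno formula to $F'\circ b_q(t)$ and integrating in $t$ then yields the key pointwise bound
$$
\|\nabla^k m_q\|_{L^\infty} \;\leq\; C_k\,2^{qk}, \qquad q\geq -1,\ k\geq 0.
$$

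I then estimate $\|\Delta_{q'}(m_q\,\Delta_q a)\|_{L^p}$ for each pair $(q,q')$. For $q'\leq q$, the boundedness of $\Delta_{q'}$ on $L^p$ together with $\|m_q\|_{L^\infty}\leq C$ gives $\|\Delta_{q'}(m_q\Delta_q a)\|_{L^p} \leq C\,\|\Delta_q a\|_{L^p}$. For $q'>q$, I fix an integer $M>s$ and exploit the annular spectral localization of $\Delta_{q'}$ via the lower half of the second Bernstein inequality, combined with the Leibniz rule:
$$
\|\Delta_{q'}(m_q\Delta_q a)\|_{L^p} \;\leq\; C\,2^{-q'M}\,\|D^M(m_q\Delta_q a)\|_{L^p} \;\leq\; C\,2^{(q-q')M}\,\|\Delta_q a\|_{L^p},
$$
where the $M$ derivatives are distributed using the bound on $\nabla^k m_q$ above and the Bernstein estimate $\|\nabla^{M-k}\Delta_q a\|_{L^p}\leq C\,2^{q(M-k)}\|\Delta_q a\|_{L^p}$.

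Combining both ranges gives the convolution inequality
$$
2^{q's}\,\|\Delta_{q'}(m_q\,\Delta_q a)\|_{L^p} \;\leq\; C\,K(q'-q)\cdot 2^{qs}\|\Delta_q a\|_{L^p}, \qquad K(n) := \begin{cases} 2^{ns} & n\leq 0,\\ 2^{-n(M-s)} & n>0, \end{cases}
$$
with $K\in\ell^1(\Z)$ because $s>0$ and $M>s$. Summing over $q$ and applying Young's inequality for discrete convolution to the $\ell^r$-norm in $q'$ produces $\|F(a)-F(0)\|_{B^s_{p,r}}\leq C\|a\|_{B^s_{p,r}}$, as required. The principal technical obstacle is the uniform Fa\`a di Bruno bound on $\nabla^k m_q$: it rests squarely on the spectral band-limitation of $b_q(t)$ at scale $2^q$, without which no gain in $q'-q$ could be extracted. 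Once that bound is in hand the rest is routine dyadic summation.
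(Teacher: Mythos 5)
Your proof is correct. The paper quotes Proposition~\ref{p:comp} without proof (it is a classical result, see e.g.\ \cite{BCD}, Chap.~2), and your argument is the standard Meyer paralinearization: the telescoping $F(a)-F(0)=\sum_{q\geq-1} m_q\,\Delta_q a$ with $m_q=\int_0^1 F'(S_q a+t\,\Delta_q a)\,dt$, the Fa\`a di Bruno bound $\|\nabla^k m_q\|_{L^\infty}\leq C_k\,2^{qk}$ exploiting the spectral support of $S_q a+t\,\Delta_q a$ in a ball of radius comparable to $2^q$, the split $q'\leq q$ versus $q'>q$ using boundedness of $\Delta_{q'}$ on $L^p$ on one side and the lower Bernstein inequality at order $M>s$ together with Leibniz on the other, and a discrete Young convolution in $\ell^r$. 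You also correctly flag the two reductions that a careless write-up would omit: $F$ must be extended smoothly beyond $J$ with bounded derivatives, since the band-limited intermediates $S_q a+t\,\Delta_q a$ take values in a compact set that can be strictly larger than $J$ (bounded by $(\|\cF^{-1}\chi\|_{L^1}+\|\cF^{-1}\varphi\|_{L^1})\sup_J|\cdot|$ rather than $\sup_J|\cdot|$); and for $p<\infty$ the conclusion must be read for $F(a)-F(0)$, since a nonzero constant never belongs to $B^s_{p,r}$ in that range.
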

Our results concerning Equations \eqref{eq:ddeuler} rely strongly on a priori estimates
in Besov spaces for the transport equation
$$
\left\{\begin{array}{l}
\d_tf+v\cdot\nabla f=g,\\[1ex]
f_{|t=0}=f_0.\end{array}\right.\leqno(T)
$$
We shall often use the  following result, the proof of which  may be found in e.g. \cite{BCD}, Chap. 3, 
or in the appendix of \cite{D1}.  
\begin{prop}\label{p:transport}  Let $1\leq p,r\leq\infty$ and 
$\sigma>0.$
Let $f_0\in B^\s_{p,r},$
 $g\in L^1([0,T];B^\s_{p,r})$ and  $v$
 be a time dependent vector-field in $\cC_b([0,T]\times\R^N)$
such that for some $p_1\geq p,$ we have  
$$
\begin{array}{lllll}
\nabla v&\in&  L^1([0,T];B^{\frac N{p_1}}_{p_1,\infty}\cap
L^\infty)&\hbox{\rm if}&\sigma<1+\frac{N}{p_1},\\[1.5ex]
\nabla v&\in& L^1([0,T];B^{\sigma -1}_{p_1,r})
&\hbox{\rm if}&\sigma>1\!+\!\frac{N}{p_1},\quad
\hbox{\rm  or }\  \sigma=1\!+\!\frac{N}{p_1}\ \hbox{\rm  and }\ r=1.
\end{array}
$$
Then Equation $(T)$ has a unique solution  $f$ in 
\begin{itemize}
\item the space $\cC([0,T];B^{\sigma}_{p,r})$ if $r<\infty,$
\item the space $\Bigl(\bigcap_{\sigma'<\sigma} \cC([0,T];B^{\sigma'}_{p,\infty})\Bigr)
\bigcap  \cC_w([0,T];B^{\sigma}_{p,\infty})$ if $r=\infty.$
\end{itemize}
Moreover,  for all $t\in[0,T],$ we have
\begin{equation}\label{sanspertes1}
e^{-CV(t)}\|f(t)\|_{B^\sigma_{p,r}}\leq
\|f_0\|_{B^\sigma_{p,r}}+\int_0^t
e^{-CV(t')}
\|g(t')\|_{B^\sigma_{p,r}}\,dt'
\end{equation} 
$$\displaylines{
\mbox{with}\quad\!\! V'(t):=\left\{
\begin{array}{l}
\!\|\nabla v(t)\|_{B^{\frac N{p_1}}_{p_1,\infty}\cap
L^\infty}\!\!\!\quad\mbox{if}\!\!\!
\quad\sigma<1+\frac{N}{p_1},\\
\!\|\nabla v(t)\|_{B^{\sigma-1}_{p_1,r}}\ \mbox{ if }\
\sigma>1+\frac{N}{p_1},\quad\!\mbox{or }\ 
\sigma=1\!+\!\frac{N}{p_1}\quad\!\!\!\mbox{and}\!\!\!\quad
r=1. 
\end{array}\right.\hfill} $$
 If $f=v$ then, for all $\sigma>0,$   Estimate  $(\ref{sanspertes1})$ holds with
$V'(t):=\|\nabla v(t)\|_{L^\infty}.$
\end{prop}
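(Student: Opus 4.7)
The plan is to proceed by the standard localization-commutator-Gronwall strategy for transport equations, which also appears in \cite{BCD}. First, I would apply the dyadic block $\Delta_q$ to $(T)$ to obtain
\begin{equation*}
\partial_t \Delta_q f + v\cdot\nabla \Delta_q f = \Delta_q g + R_q, \qquad R_q := [v\cdot\nabla,\Delta_q] f,
\end{equation*}
and then estimate $\Delta_q f$ in $L^p$. Multiplying by $|\Delta_q f|^{p-2}\Delta_q f$, integrating, and using integration by parts on the transport term (which produces a $\|\div v\|_{L^\infty}$ factor, harmless since it is controlled by $\|\nabla v\|_{L^\infty}$ or by a Besov norm of $\nabla v$ via the embedding of Proposition \ref{p:embed}), I obtain
\begin{equation*}
\|\Delta_q f(t)\|_{L^p} \leq \|\Delta_q f_0\|_{L^p} + \int_0^t \bigl(\|\Delta_q g\|_{L^p} + \|R_q(\tau)\|_{L^p} + C\|\div v(\tau)\|_{L^\infty}\|\Delta_q f(\tau)\|_{L^p}\bigr)\,d\tau.
\end{equation*}

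The core step is the commutator estimate: I need to show that there exists a sequence $(c_q)_{q\geq -1}$ with $\|(c_q)\|_{\ell^r}\leq 1$ such that
\begin{equation*}
2^{q\sigma}\|R_q(t)\|_{L^p} \leq C\, c_q(t)\, V'(t)\, \|f(t)\|_{B^\sigma_{p,r}},
\end{equation*}
with $V'(t)$ as defined in the statement. This is done by decomposing $v\cdot\nabla f$ via Bony's formula \eqref{eq:bony}, writing
\begin{equation*}
R_q = [\Delta_q, T_{v^k}]\partial_k f \;-\; T_{\partial_k \Delta_q f}v^k \;+\; \Delta_q T'_{\partial_k f}v^k \;-\; \bigl(\Delta_q R(v^k,\partial_k f) - R(v^k,\partial_k \Delta_q f)\bigr),
\end{equation*}
and handling each piece using Proposition \ref{p:op} and a first-order Taylor expansion of $\chi(2^{-q}\cdot)$ for the $[\Delta_q, T_{v^k}]$ term, combined with Bernstein's inequalities (Lemma \ref{lpfond}). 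The various cases in the statement correspond to which term dominates: when $\sigma<1+N/p_1$, the Lipschitz bound $\|\nabla v\|_{L^\infty}$ controls the paraproduct pieces and $\|\nabla v\|_{B^{N/p_1}_{p_1,\infty}}$ controls the remainder; when $\sigma>1+N/p_1$ (or $\sigma=1+N/p_1$ with $r=1$) the remainder piece $R(v^k,\partial_k f)$ requires the sharper $B^{\sigma-1}_{p_1,r}$ norm on $\nabla v$. In the self-transport case $f=v$, several terms cancel and only $\|\nabla v\|_{L^\infty}$ is needed.

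Having established the bound on $2^{q\sigma}\|R_q\|_{L^p}$, I would multiply the $\Delta_q$ estimate by $2^{q\sigma}$, take the $\ell^r$ norm in $q$, and apply Gronwall's lemma, which yields \eqref{sanspertes1}. For existence, I would regularize the data by setting $f_0^n:=S_n f_0$, $g^n:=S_n g$, $v^n:=S_n v$, solve the smooth transport equation by the method of characteristics (the flow is well-defined since $\nabla v$ is $L^1_t(L^\infty_x)$ in each regime considered), and pass to the limit using the uniform bounds in $B^\sigma_{p,r}$ and the equation itself to get strong convergence in a weaker norm. Uniqueness is obtained by applying the a priori estimate in a space of lower regularity to the difference of two solutions, for which $g=0$ and $f_0=0$. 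Finally, time continuity in the strong Besov topology for $r<\infty$ follows from the continuity of $t\mapsto \|\Delta_q f(t)\|_{L^p}$ for each $q$ together with the uniform $\ell^r$ tail bound, while for $r=\infty$ weak-$*$ continuity is a direct consequence of the uniform bound and strong continuity at lower regularity. The main obstacle throughout is the endpoint case $\sigma=1+N/p_1$, $r=1$, where the commutator decomposition must be carried out carefully so that each summation in $q$ stays absolutely convergent.
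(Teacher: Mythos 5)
Your proposal correctly reproduces the standard proof from \cite{BCD}, Chap.~3 (or the appendix of \cite{D1}), which is exactly the reference the paper cites in lieu of a self-contained argument: dyadic localization, commutator estimate $[v\cdot\nabla,\Delta_q]f$ via Bony's decomposition handled piece by piece, multiplication of the resulting block estimate by $2^{q\sigma}$, $\ell^r$ summation, and Gronwall's lemma, with existence by spectral truncation and uniqueness at lower regularity. Two small points worth tightening. First, in the self-transport case $f=v$ nothing literally cancels; rather, the term $\Delta_q T_{\partial_k f}v^k$, which for general $f$ is what forces the Besov assumption on $\nabla v$ (since one must put $S_{q'-1}\nabla f$ in $L^\infty$ without assuming $f$ Lipschitz), becomes $\Delta_q T_{\partial_k v}v^k$ and can then be bounded by $\|\nabla v\|_{L^\infty}\cdot 2^{-q\sigma}c_q\|v\|_{B^\sigma_{p,r}}$ directly; that re-allocation of norms, not a cancellation, is what makes the uniform Lipschitz bound suffice. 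Second, for $1<p<2$ the multiplier $|\Delta_q f|^{p-2}\Delta_q f$ is not $C^1$, so the integration-by-parts step producing the $\div v$ term requires the regularization $T_\eps(x)=\sqrt{x^2+\eps^2}$ exactly as carried out in the proof of Lemma~\ref{l:Bernstein} in the appendix.
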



\section{Elliptic estimates}\label{s:elliptic}

In this section, we want to prove high regularity estimates in Besov spaces for the
following elliptic equation
\begin{equation}\label{eq:elliptic}
-\div(a\nabla\Pi)=\div F\quad\hbox{in }\ \R^N
\end{equation}
where $a=a(x)$ is a given suitably smooth bounded function satisfying 
\begin{equation}\label{eq:ellipticity}
a_*:=\inf_{x\in\R^N}a(x)>0.
\end{equation}
Let us recall that in the case $a\equiv1$ the following result is available:
\begin{prop}\label{p:Lp}
If $a\equiv1$ and $p\in(1,\infty)$ then 
there exists a solution map $F\mapsto\nabla\Pi$ continuous on $L^p.$
\end{prop}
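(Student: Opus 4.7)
The plan is to define the solution map explicitly via the Leray-Helmholtz decomposition: set
$$
\nabla\Pi := \cQ F,
$$
where $\cQ = \Id - \cP$ is the potential projector introduced in Remark \ref{r:CZ}. Recall that it is defined by the Fourier multiplier
$$
\widehat{\cQ F}(\xi) = -\frac{\xi}{|\xi|^2}\bigl(\xi\cdot\hat F(\xi)\bigr).
$$
First, I would verify that this candidate actually solves the equation. A direct computation in Fourier variables gives
$$
\widehat{\div(\cQ F)}(\xi) = i\xi\cdot\widehat{\cQ F}(\xi) = -i\xi\cdot\hat F(\xi) = -\widehat{\div F}(\xi),
$$
so that $-\div(\cQ F) = \div F$ in $\cS'(\R^N)$. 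Similarly, the Fourier symbol of $\d_j(\cQ F)_k$ equals $\xi_j\xi_k|\xi|^{-2}\,\xi\cdot\hat F(\xi)$, which is symmetric in $(j,k)$; hence $\cQ F$ is curl-free in the distributional sense. Since $\R^N$ is simply connected, this curl-free tempered distribution is the gradient of some $\Pi\in\cS'(\R^N)/\R$, so we legitimately write $\nabla\Pi = \cQ F$ and get a distributional solution of $-\Delta\Pi = \div F$.

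Next, I would establish the $L^p$ bound. Each component of the matrix symbol $\xi_j\xi_k/|\xi|^2$ is smooth and homogeneous of degree $0$ on $\R^N\setminus\{0\},$ so Proposition \ref{p:CZ} applies to every entry of $\cQ$ and yields, for any $p\in(1,\infty)$,
$$
\|\nabla\Pi\|_{L^p} = \|\cQ F\|_{L^p} \leq C\|F\|_{L^p},
$$
which is exactly the continuity of the solution map.

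There is essentially no obstacle here: the whole content of Proposition \ref{p:Lp} reduces, once the correct candidate is chosen, to the Calderón-Zygmund / H\"ormander-Mihlin theorem already encapsulated in Proposition \ref{p:CZ}. The only point that deserves a line of justification is the identification of $\cQ F$ with a genuine gradient, and this follows from the Fourier-level symmetry computation above.
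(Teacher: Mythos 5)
Your proof is correct and follows essentially the same route as the paper: the paper sets $\nabla\Pi=\nabla(-\Delta)^{-1}\div F$, which with the paper's sign convention in Remark~\ref{r:CZ} is exactly the operator $\cQ$, and then invokes Proposition~\ref{p:CZ}. Your additional Fourier-level checks (that the candidate solves the equation and is curl-free, hence a gradient) are sound and simply make explicit what the paper's one-line proof leaves implicit.
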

\begin{proof} We set $\nabla\Pi=\nabla(-\Delta)^{-1}\div F.$
Obviously the pseudo-differential operator $\nabla(-\Delta)^{-1}\div$
satisfies the conditions of Proposition \ref{p:CZ}. 
Hence $F\mapsto \nabla\Pi$ is a continuous self-map on $L^p.$ 
\end{proof}
We now turn to the study of \eqref{eq:elliptic} for  nonconstant coefficients.
For the convenience of the reader let us first  establish the following classical result
pertaining to the $L^2$ case.  
\begin{lem}\label{l:laxmilgram}
For all vector-field $F$ with coefficients in $L^2,$ there exists a tempered distribution $\Pi,$
unique up to  constant functions, 
such that  $\nabla\Pi\in L^2$ and  
Equation $\eqref{eq:elliptic}$ is satisfied. 
In addition, we have 
\begin{equation}\label{eq:el0}
a_* \|\nabla\Pi\|_{L^2}\leq\|F\|_{L^2}.
\end{equation}
\end{lem}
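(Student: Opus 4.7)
The plan is to apply the Lax--Milgram lemma in the homogeneous Sobolev space $\dot H^1(\R^N)$. Introduce the bilinear form
$$
B(\Pi,\phi):=\int_{\R^N} a\,\nabla\Pi\cdot\nabla\phi\,dx
$$
and the linear functional
$$
L(\phi):=-\int_{\R^N} F\cdot\nabla\phi\,dx.
$$
Since $a$ is bounded and satisfies \eqref{eq:ellipticity}, the form $B$ is continuous with constant $a^*$ and coercive with constant $a_*$ on the inner product space
$$
\dot H^1(\R^N):=\bigl\{\Pi\in\cS'(\R^N)/\R\,:\,\nabla\Pi\in L^2(\R^N)\bigr\},\qquad \langle\Pi,\phi\rangle:=\int_{\R^N}\nabla\Pi\cdot\nabla\phi\,dx,
$$
while $L$ is continuous with norm bounded by $\|F\|_{L^2}$ by Cauchy--Schwarz. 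First I would verify that $\dot H^1(\R^N)$ is a Hilbert space: in dimension $N\geq 3$ this follows from Sobolev embedding $\dot H^1\hookrightarrow L^{\frac{2N}{N-2}}$, so the elements are genuine tempered distributions; in dimension $N=2$ one works literally modulo constants, which is harmless since only $\nabla\Pi$ appears in the conclusion.

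Next I would apply Lax--Milgram to obtain a unique $\Pi\in\dot H^1(\R^N)$ such that $B(\Pi,\phi)=L(\phi)$ for every $\phi\in\dot H^1(\R^N)$. Testing against arbitrary $\phi\in\cS(\R^N)$ (which belongs to $\dot H^1$ after subtracting a constant) translates this identity into
$$
-\div(a\nabla\Pi)=\div F\qquad\text{in }\cS'(\R^N).
$$
Taking $\phi=\Pi$ in the weak formulation and applying Cauchy--Schwarz yields
$$
a_*\|\nabla\Pi\|_{L^2}^2\leq\int_{\R^N}a|\nabla\Pi|^2\,dx=-\int_{\R^N}F\cdot\nabla\Pi\,dx\leq\|F\|_{L^2}\|\nabla\Pi\|_{L^2},
$$
which gives \eqref{eq:el0} after dividing by $\|\nabla\Pi\|_{L^2}$. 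Uniqueness up to an additive constant is immediate: if $\Pi_1$ and $\Pi_2$ are two solutions with gradient in $L^2$ then their difference is a tempered distribution with vanishing gradient, hence a constant.

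The only real obstacle is the proper setup of the functional framework in the whole space: one must justify that $\dot H^1(\R^N)$ (or $\dot H^1(\R^N)/\R$ when $N=2$) is a Hilbert space whose elements can be interpreted as tempered distributions, so that the abstract Lax--Milgram solution delivers an honest $\Pi\in\cS'(\R^N)$ satisfying the equation in the distributional sense. Once this is in place, continuity, coercivity, and the energy estimate are entirely standard.
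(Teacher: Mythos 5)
Your route — Lax--Milgram directly on the homogeneous space $\dot H^1(\R^N)$ — is genuinely different from the paper's. The paper avoids the homogeneous-space setup entirely: it regularizes the form by adding $\lambda(u\mid v)_{L^2}$ so that Lax--Milgram applies on the ordinary inhomogeneous $H^1$, then passes to the limit $\lambda\to 0$ using the uniform $L^2$ bound on $\nabla\Pi_\lambda$ and weak compactness. Your approach is cleaner and more direct once one accepts the functional setup (completeness of $\dot H^1$, interpretation modulo constants in dimension $2$); the paper's is more elementary in that it never leaves the comfortable $H^1$ framework, at the cost of an extra compactness step. Both are legitimate.

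However, there is a genuine gap in your uniqueness argument. You write that if $\Pi_1$ and $\Pi_2$ are two tempered solutions with gradient in $L^2$, ``then their difference is a tempered distribution with vanishing gradient.'' That is precisely what must be proved, not what is given. The difference $\Pi:=\Pi_1-\Pi_2$ only satisfies $\div(a\nabla\Pi)=0$ in $\cS'$ with $\nabla\Pi\in L^2$; there is no a priori reason for $\nabla\Pi$ to vanish. To conclude $\nabla\Pi=0$ you must be allowed to test the identity $\int a\,\nabla\Pi\cdot\nabla\phi\,dx=0$ against $\phi=\Pi$ itself (or against something converging to it in $\dot H^1$), which is not immediate since $\Pi$ need not be in $H^1$. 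This is exactly the step the paper spells out: it introduces the high-frequency truncations $\Pi_n:=(\Id-\chi(nD))\Pi$, observes these belong to $H^1$, tests against them, and lets $n\to\infty$ using $\nabla\Pi_n\to\nabla\Pi$ in $L^2$. In your framework the analogous missing step is to verify that Schwartz functions (mod constants) are dense in $\dot H^1(\R^N)$, so that the distributional equation implies $B(\Pi,\phi)=L(\phi)$ for every $\phi\in\dot H^1$; only then does the uniqueness clause of Lax--Milgram identify any tempered solution with yours. Once you insert either of these density arguments the proof is complete.
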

\begin{proof} The existence part of the statement is a consequence of the Lax-Milgram theorem.
Indeed, for $\lambda>0,$ consider the following bilinear map:
$$
b_\lambda(u,v)=(a\nabla u\mid \nabla v)_{L^2}
+\lambda(u\mid v)_{L^2}\quad\hbox{for }\ u\ \hbox{ and }\ 
v\ \hbox{ in }\ H^1(\R^N).
$$
Obviously $b_\lambda$ is continuous and coercive, hence, given $F\in (L^2(\R^N))^N,$
 there exists a unique $\Pi_\lambda\in H^1(\R^N)$ so that
 $$
 b_\lambda(u,\Pi_\lambda)=(u\mid F)_{L^2}\quad\hbox{for all } \ u\in H^1(\R^N).
 $$
Taking $u=\Pi_\lambda$ and using the Cauchy-Schwarz inequality, we see that \eqref{eq:el0} is satisfied by $\Pi_\lambda.$ Hence $(\nabla\Pi_\lambda)_{\lambda>0}$ is bounded in
$L^2$ and  there exist some $Q\in (L^2(\R^N))^N$ and a sequence $(\lambda_n)_{n\in\N}$
converging to $0,$ such that 
$\nabla\Pi_{\lambda_n}\rightharpoonup Q$ weakly in $L^2.$ 
Note that this implies that $Q$ satisfies 
$\div(aQ)=\div F$ in the distributional sense, and also that $Q$ is the gradient of
some tempered distribution~$\Pi.$ 
Besides, we have
$$
\|\nabla\Pi\|_{L^2}=\|Q\|_{L^2}\leq\liminf\|\nabla\Pi_{\lambda_n}\|_{L^2}\leq a_*^{-1}\|F\|_{L^2}.
$$
As regards uniqueness, it suffices to check that the constant functions are the only
tempered solutions with gradient in $L^2$ which satisfy   \eqref{eq:elliptic}
with $F\equiv0.$ 
So let us consider  $\Pi\in\cS'$ with $\nabla\Pi\in L^2$
and  $\div(a\nabla\Pi)=0.$
We thus have 
\begin{equation}\label{eq:el00}
\int a\nabla u\cdot\nabla\Pi\,dx=0 \quad\hbox{for all }\ u\in H^1.
\end{equation}
By taking advantage of the Fourier transform and of Parseval equality, it is
easy to check that for $n>0,$ the tempered distribution 
$\Pi_n:=(\Id-\chi(nD))\Pi$ (where the cut-off function $\chi$ has been 
defined in Section \ref{s:tools}) belongs to $H^1.$ Hence one may take $u=\Pi_n$
in \eqref{eq:el00} and we get 
 $$ 
 \int a\nabla\Pi\cdot \nabla\Pi_n\,dx=0 \quad\hbox{for all }\  n>0.
$$
 As $\nabla\Pi_n$ tends to $\nabla\Pi$ in $L^2$ 
  and   $a\geq a^*>0,$ this readily  implies that
 $\nabla\Pi=0.$
\end{proof}
Let us now establish higher order estimates.
\begin{prop}\label{p:elliptic} Let $1<p<\infty$ and $1\leq r\leq\infty.$ Let $a$ be a bounded  function satisfying $\eqref{eq:ellipticity}$ 
and such that $Da\in B^{s-1}_{p,r}$ for some $s>1+N/p$ or $s\geq 1+N/p$ if $r=1.$
\begin{itemize}
\item If $1<p<\infty,$ $\sigma\in(1,s]$ and $\nabla\Pi\in B^\sigma_{p,r}$ satisfies $\eqref{eq:elliptic}$ for some function $F$ such that $\div F\in B^{\sigma-1}_{p,r}$  then we have
for some constant $C$ depending only on $s,\sigma,p,N,$  
$$
a_* 
\|\nabla\Pi\|_{B^\s_{p,r}}\leq C\biggl(\|\div F\|_{B^{\s-1}_{p,r}}+a_*\Bigl(1
+a_*^{-1}\|Da\|_{B^{s-1}_{p,r}}\Bigr)^\s\|\nabla\Pi\|_{L^p}\biggr).$$
\item
If  $2\leq p<\infty$ and  $F$ is in $L^2$ and satisfies $\div F\in B^{\s-1}_{p,r}$
for some  $\s\in(1+N/p-N/2,s]$ then  Equation $\eqref{eq:elliptic}$ has a unique solution 
$\Pi$ (up to constant functions) such that $\nabla\Pi\in L^2\cap B^{\s}_{p,r}.$
Furthermore, Inequality $\eqref{eq:el0}$ is satisfied
and there exists a positive exponent $\gamma$ depending only on $\s,$ $p,$ $N$ and
a  positive constant  $C$ depending only on $s,\s,p,N$ such that
$$
a_* \|\nabla\Pi\|_{B^\s_{p,r}}\leq C\biggl(\|\div F\|_{B^{\s-1}_{p,r}}+\Bigl(1+a_*^{-1}
\|Da\|_{B^{s-1}_{p,r}}\Bigr)^\gamma\|F\|_{L^2}\biggr).$$
\item If $\sigma>1$ and $1<p<\infty$ then the following inequality holds:
$$
a_* \|\nabla\Pi\|_{B^\s_{p,r}}\leq C\Bigl(\|\div F\|_{B^{\s-1}_{p,r}}+
\|\nabla a\|_{L^\infty}\|\nabla\Pi\|_{B^{\s-1}_{p,r}}+\|\nabla\Pi\|_{L^\infty}\|\nabla a\|_{B^{\s-1}_{p,r}}
\Bigr).
$$
\end{itemize}
\end{prop}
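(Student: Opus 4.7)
The strategy is to apply the Littlewood--Paley block $\Delta_q$ to the equation, exploit the pointwise ellipticity $a\geq a_*$ together with the second Bernstein inequality to recover the top-order derivative of $\nabla\Pi$ at each scale, and absorb the error terms using a tame commutator estimate in the Besov framework.

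Concretely, applying $\Delta_q$ with $q\geq 0$ to $-\div(a\nabla\Pi)=\div F$ and expanding the divergence yields
\begin{equation*}
-a\,\Delta\Delta_q\Pi\;=\;\Delta_q\div F\;+\;\div\bigl([\Delta_q,a]\nabla\Pi\bigr)\;+\;\nabla a\cdot\nabla\Delta_q\Pi.
\end{equation*}
Taking the $L^p$-norm, using $a\geq a_*>0$ pointwise, and invoking Lemma~\ref{lpfond} in the form $\|\Delta\Delta_q\Pi\|_{L^p}\geq c\,2^q\|\nabla\Delta_q\Pi\|_{L^p}$ (valid because $\Delta_q\Pi$ is spectrally supported in an annulus of size $2^q$), I would derive
\begin{equation*}
c\,a_*\,2^q\|\nabla\Delta_q\Pi\|_{L^p}\leq \|\Delta_q\div F\|_{L^p}+\|\div[\Delta_q,a]\nabla\Pi\|_{L^p}+\|\nabla a\|_{L^\infty}\|\nabla\Delta_q\Pi\|_{L^p}.
\end{equation*}

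The essential step is then a tame estimate for the commutator. Writing
\[
\div\bigl([\Delta_q,a]\nabla\Pi\bigr)\;=\;[\Delta_q,a]\Delta\Pi\;+\;[\Delta_q,\nabla a]\cdot\nabla\Pi
\]
and analyzing each piece via Bony's paraproduct--remainder decomposition \eqref{eq:bony} together with Proposition~\ref{p:op}, one obtains, with a nonnegative sequence $(c_q)_{q\geq-1}$ of unit $\ell^r$-norm,
\begin{equation*}
2^{q(\sigma-1)}\|\div[\Delta_q,a]\nabla\Pi\|_{L^p}\leq C\,c_q\Bigl(\|\nabla a\|_{L^\infty}\|\nabla\Pi\|_{B^{\sigma-1}_{p,r}}+\|\nabla\Pi\|_{L^\infty}\|\nabla a\|_{B^{\sigma-1}_{p,r}}\Bigr).
\end{equation*}

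Multiplying the frequency-localized bound by $2^{q(\sigma-1)}$, raising to the power $r$, summing over $q\geq 0$, and extracting the $r$-th root then controls the high-frequency portion of $\|\nabla\Pi\|_{B^\sigma_{p,r}}$. The low-frequency block $\Delta_{-1}\nabla\Pi$ is handled separately: since $\sigma>1$ we have the embedding $B^{\sigma-1}_{p,r}\hookrightarrow L^p$, which together with the trivial estimate $\|\Delta_{-1}\nabla\Pi\|_{L^p}\leq C\|\nabla\Pi\|_{L^p}$ yields its absorption into $\|\nabla\Pi\|_{B^{\sigma-1}_{p,r}}$. The main obstacle is the commutator estimate: obtaining the correct scaling $c_q\,2^{-q(\sigma-1)}$ with the tame right-hand side requires a careful dyadic decomposition into low--high, high--low, and diagonal interactions, exploiting that the paraproduct piece $[\Delta_q,T_a]\nabla\Pi$ gains a factor $2^{-q}\|\nabla a\|_{L^\infty}$ while the remaining ``flexible'' pieces trade off regularity between $a$ and $\nabla\Pi$. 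This is the only non-routine ingredient and is the kind of statement naturally deferred to the appendix.
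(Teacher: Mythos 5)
Your frequency-localized starting point is a genuinely different, arguably more elementary route than the paper's. Where you expand $\div(a\nabla\Delta_q\Pi)=a\Delta\Delta_q\Pi+\nabla a\cdot\nabla\Delta_q\Pi$ and use the \emph{pointwise} bound $a\geq a_*$ together with the ordinary Bernstein inequality, the paper instead keeps the divergence form intact, pairs against $|\Delta_q\Pi|^{p-2}\Delta_q\Pi$, and invokes the variable-coefficient Bernstein-type lower bound (Lemma~\ref{l:Bernstein}), whose integration-by-parts structure avoids the extra $\nabla a\cdot\nabla\Delta_q\Pi$ term entirely. Your version incurs this term, but after multiplication by $2^{q(\sigma-1)}$ and $\ell^r$-summation it only produces $\|\nabla a\|_{L^\infty}\|\nabla\Pi\|_{B^{\sigma-1}_{p,r}}$, which is of the same type as (and dominated by) the commutator contribution $\|\nabla a\|_{B^{s-1}_{p,r}}\|\nabla\Pi\|_{B^{\sigma-1}_{p,r}}$, so it is harmless. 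What your approach buys is that it sidesteps the proof of Lemma~\ref{l:Bernstein}, which is the most delicate analytic step in the paper's argument (especially the regularization argument for $1<p<2$).

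The genuine gap is that your proposal, as written, proves only the \emph{third} bullet. The tame commutator estimate you state, namely
\[
\|\div[\Delta_q,a]\nabla\Pi\|_{L^p}\leq C\,c_q\,2^{-q(\sigma-1)}\bigl(\|\nabla a\|_{L^\infty}\|\nabla\Pi\|_{B^{\sigma-1}_{p,r}}+\|\nabla\Pi\|_{L^\infty}\|\nabla a\|_{B^{\sigma-1}_{p,r}}\bigr),
\]
is exactly Lemma~\ref{l:combis} and yields the third inequality. But the first and second bullets do not allow $\|\nabla\Pi\|_{L^\infty}$ on the right, and since $\sigma$ may be as small as slightly above $1$ (bullet~1) or even below $1+N/p$ (bullet~2), the embedding $B^{\sigma}_{p,r}\hookrightarrow W^{1,\infty}$ is unavailable, so $\|\nabla\Pi\|_{L^\infty}$ cannot be interpolated away. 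For those bullets one must instead use the \emph{strong} commutator estimate of Lemma~\ref{l:com}, which puts all the index-$s$ regularity on $a$ and produces $\|\nabla a\|_{B^{s-1}_{p,r}}\|\nabla\Pi\|_{B^{\sigma-1}_{p,r}}$; this requires the Lipschitz-type Condition~$(C)$ on $(s,p,r)$ and uses $B^{s-1}_{p,r}\hookrightarrow L^\infty$ in a crucial way. After that, the intermediate term $\|\nabla\Pi\|_{B^{\sigma-1}_{p,r}}$ is eliminated by the interpolation $\|\nabla\Pi\|_{B^{\sigma-1}_{p,r}}\leq C\|\nabla\Pi\|_{L^p}^{1/\sigma}\|\nabla\Pi\|_{B^\sigma_{p,r}}^{1-1/\sigma}$ followed by Young's inequality, which is how the exponent $\sigma$ appears in bullet~1. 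Bullet~2 requires yet more: existence of the solution with $\nabla\Pi\in L^2$ (Lax--Milgram, Lemma~\ref{l:laxmilgram}), the embedding $L^2\hookrightarrow B^{N(1/p-1/2)}_{p,\infty}$ (which uses $p\geq 2$), interpolation between $B^{N(1/p-1/2)}_{p,\infty}$ and $B^\sigma_{p,r}$ (hence the exponent $\gamma$ and the constraint $\sigma>1+N/p-N/2$), and a bootstrap to upgrade the regularity of $\nabla\Pi$ from $B^{N(1/p-1/2)}_{p,\infty}$ to $B^\sigma_{p,r}$. None of these ingredients appear in your outline.

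A smaller imprecision: the low-frequency block should be bounded simply by $\|\Delta_{-1}\nabla\Pi\|_{L^p}\leq C\|\nabla\Pi\|_{L^p}$ and carried along as the term $a_*\|\nabla\Pi\|_{L^p}$ (as in bullet~1) or compared to $\|F\|_{L^2}$ via Bernstein (as in bullet~2); phrasing it as ``absorption into $\|\nabla\Pi\|_{B^{\sigma-1}_{p,r}}$'' is correct only for bullet~3, where that norm is already present on the right-hand side.
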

\begin{proof}
Throughout, $(c_q)_{q\geq-1}$ denotes a sequence in the unit
sphere of $\ell^r.$

The proof relies on two ingredients:
\begin{enumerate}
\item[{\it (i)}]  the following commutator 
estimates (see Lemmas \ref{l:com}  and \ref{l:combis} in the appendix) 
\begin{eqnarray}\label{eq:comest}
&&\|\div[a,\dq]\nabla\Pi\|_{L^p}\leq 
Cc_q2^{-q(\s-1)}\|\nabla a\|_{B^{s-1}_{p,r}}\|\nabla\Pi\|_{B^{\s-1}_{p,r}},\\\label{eq:comestbis}
&&\|\div[a,\dq]\nabla\Pi\|_{L^p}\leq 
Cc_q2^{-q(\s-1)}\bigl(\|\nabla\Pi\|_{L^\infty}\|\nabla a\|_{B^{\s-1}_{p,r}}
+\|\nabla a\|_{L^\infty}\|\nabla\Pi\|_{B^{\s-1}_{p,r}}\bigr)
\end{eqnarray}
which hold  true whenever $\s\in(0,s]$ and $(s,p,r)$ satisfies Condition~$(C)$
(as regards \eqref{eq:comest})
and whenever $\s>1$ (as concerns \eqref{eq:comestbis});
\item[{\it (ii)}]   a Bernstein type inequality (see Lemma \ref{l:Bernstein}  in the appendix).
\end{enumerate}
For proving the first part of the lemma,  apply the spectral cut-off operator $\dq$ to \eqref{eq:elliptic}.
We get
$$
-\div(a\dq\nabla\Pi)=\div\dq F+\div([\dq,a] \nabla\Pi)\quad\hbox{for all }\ q\geq0.
$$
Hence, multiplying both sides by $|\dq\Pi|^{p-2}\dq\Pi$ and integrating over $\R^N,$ we get
$$\displaylines{
-\int|\dq\Pi|^{p-2}\dq\Pi\,\div(a\dq\nabla\Pi)\,dx=
\int|\dq\Pi|^{p-2}\dq\Pi\,\div\dq F\,dx\hfill\cr\hfill+\int|\dq\Pi|^{p-2}\dq\Pi\,\div([\dq,a]\nabla\Pi)\,dx.}
$$
Apply  Lemma \ref{l:Bernstein} to bound by below the
left-hand side of the above inequality. Using H\"older's inequality to handle the right-hand side, 
we get for all $q\geq0,$
\begin{equation}\label{eq:el2}
a_*2^{2q}\|\dq\Pi\|_{L^p}^p\leq C\|\dq\Pi\|_{L^p}^{p-1}
\Bigl(\|\div\dq F\|_{L^p}+\|\div[\dq,a]\nabla\Pi\|_{L^p}\Bigr).
\end{equation}
To deal with the last term, one may now take advantage of Inequality \eqref{eq:comest}.
Since, for $q\geq 0,$ we have
 $\|\dq\nabla\Pi\|_{L^p}\approx2^q\|\dq\Pi\|_{L^p}$ according to Lemma \ref{lpfond},
we get
after our multiplying Inequality \eqref{eq:el2} by $2^{q(\sigma-1)}$:
$$
a_*2^{q\s}\|\dq\nabla\Pi\|_{L^p}\leq C
\Bigl(2^{q(\s-1)}\|\dq\div F\|_{L^p}+c_q\|\nabla a\|_{B^{s-1}_{p,r}}\|\nabla\Pi\|_{B^{\s-1}_{p,r}}\Bigr)\quad\hbox{for all }\ q\in\N.
$$
Taking the $\ell^r$ norm of both sides and adding up the low frequency
block pertaining to $\Delta_{-1}\nabla\Pi,$ we get
\begin{equation}\label{eq:el1}
a_*\|\nabla\Pi\|_{B^\s_{p,r}}\leq C\Bigl(\|\div F\|_{B^{\s-1}_{p,r}}
+\|\nabla a\|_{B^{s-1}_{p,r}}\|\nabla\Pi\|_{B^{\s-1}_{p,r}}+a_*\|\Delta_{-1}\nabla\Pi\|_{L^p}\Bigr).
\end{equation}
Observe that
 $\|\Delta_{-1}\nabla\Pi\|_{L^p}\leq C\|\nabla\Pi\|_{L^p},$
and that  the following interpolation inequality is available (recall that $0<\sigma-1$):
$$
\|\nabla\Pi\|_{B^{\s-1}_{p,r}}\leq C\|\nabla\Pi\|_{L^p}^{\frac1\s}
\|\nabla\Pi\|_{B^\s_{p,r}}^{1-\frac1\s}.
$$
Then, applying a suitable Young inequality completes the proof of the first part of the proposition.
\smallbreak
Let us now tackle the proof of the second part of the Proposition. 
As $F\in L^2,$ the existence of a solution $\nabla\Pi$ in $L^2$
is ensured by Lemma \ref{l:laxmilgram}.
Let us admit for a while that $\nabla\Pi\in B^\sigma_{p,r}$ and let us prove
the desired inequality. 
  As $p\geq2,$ 
we have
$$
L^2\hookrightarrow B^{N(\frac1p-\frac12)}_{p,\infty}.
$$
Hence, as $B^{\s-1}_{p,r}$ is an interpolation space between $B^{N(\frac1p-\frac12)}_{p,\infty}$
and $B^\s_{p,r}$ (here comes the assumption that $\s-1>N/p-N/2$), one may write
for some convenient exponent $\theta=\theta(p,\s,N)\in(0,1),$
$$
\|\nabla\Pi\|_{B^{\s-1}_{p,r}}\leq C\|\nabla\Pi\|_{L^2}^{\theta}
\|\nabla\Pi\|_{B^\s_{p,r}}^{1-\theta}.
$$
In addition, as $p\geq2,$ Bernstein's inequality implies that 
$$\|\Delta_{-1}\nabla\Pi\|_{L^p}\leq C\|\nabla\Pi\|_{L^2}.
$$
Hence, plugging the last two   inequalities in  \eqref{eq:el1} and using \eqref{eq:el0} yields
$$
a_*\|\nabla\Pi\|_{B^\s_{p,r}}\leq C\Bigl(\|\div F\|_{B^{\s-1}_{p,r}}+\|F\|_{L^2}
+a_*^{-1}\|F\|_{L^2}^\theta\|\nabla a\|_{B^{s-1}_{p,r}}
\bigl(a_*\|\nabla\Pi\|_{B^{\s}_{p,r}}\bigr)^{1-\theta}\Bigr).
$$
Then applying Young's inequality completes the proof.

Remark that Inequality \eqref{eq:el1} remains valid whenever $\nabla\Pi$ is in $B^{\sigma-1}_{p,r}.$
Starting from the fact that the constructed solution $\nabla\Pi$ is in 
$B^{N(\frac1p-\frac12)}_{p,\infty},$ a straightforward induction argument 
allows to state that  $\nabla\Pi$ is indeed in  $B^{\sigma}_{p,r}.$
This completes the second part of the proof. 
\smallbreak
For proving the last part of the proposition, the starting point is Inequality \eqref{eq:el2}
which implies that
$$
a_*2^{q\s}\|\nabla\dq\Pi\|_{L^p}\leq C2^{q(\s-1)}\bigl(\|\dq\div F\|_{L^p}+\|\div[\dq,a]\nabla\Pi\|_{L^p}\bigr).
$$
Now, taking advantage of Inequality \eqref{eq:comestbis} then summing up over $q\geq-1,$
we readily obtain the desired result.
\end{proof}


\section{Proof of the first local well-posedness result}\label{s:th:main}

As a preliminary step, let us observe that System \eqref{eq:ddeuler} is \emph{time reversible}. 
That is, changing  $(t,x)$ in $(-t,-x)$
restricts the  study of the Cauchy problem to the evolution for \emph{positive} times.
To simplify the presentation,  we shall thus concentrate from now on to the unique solvability 
of the system for positive times only. 

In the first part of this section, we establish the uniqueness part of Theorem \ref{th:main}.
When proving existence, it is convenient to treat the two cases $p\geq2$ and $p<2$ separately. 
The reason why is that the proof strongly relies on Proposition \ref{p:elliptic} 
which enables to compute the pressure \emph{only if $p\geq2$}. 
Indeed, if $p<2$ then only an a priori estimate is stated. 

So, in the second part of this section, we prove the existence in the case $p\geq2.$
The third subsection is devoted to the proof of Theorem \ref{th:BKM} in the case $p\geq2.$
It will be needed for proving the existence part of Theorem \ref{th:main}
in the case $p<2.$  
The following   part of this section is  devoted to the proof of Theorems 
\ref{th:main} and \ref{th:BKM} in the case $p<2.$ 
In the last paragraph,  we justify the claim pertaining to the case $p>N/(N-1)$
 (see just after the statement of Theorem \ref{th:main}).
\smallbreak
For expository purpose, we shall assume in this section and in the rest of the
paper that $r<\infty.$ For treating  the case $r=\infty,$ it is only a matter of replacing the strong topology
by weak topology  whenever  regularity 
\emph{up to index} $s$ is involved.


\subsection{Uniqueness}

Uniqueness in Theorems \ref{th:main} is a  consequence
of the following general stability  result for solutions to \eqref{eq:ddeuler}.
\begin{prop}\label{p:uniqueness}
Let $(\rho_1,u_1,\nabla\Pi_1)$ and $(\rho_2,u_2,\nabla\Pi_2)$
satisfy $\eqref{eq:ddeuler}$ with exterior forces $f_1$ and $f_2.$ Assume in addition 
that $\rho_1$ and  $\rho_2$ are bounded and bounded away from zero, 
that $\du:=u_2-u_1$ and $\dr:=\rho_2-\rho_1$ belong to 
$\cC^1([0,T];L^2),$ that $\df:=f_2-f_1$ is in $\cC([0,T];L^2)$
 and that $\nabla\Pi_1,$ $\nabla\rho_1$ 
and $\nabla u_1$ belong to $L^1([0,T];L^\infty).$
Then for all $t\in[0,T],$ we have \begin{equation}\label{eq:uniq}
\|\dr(t)\|_{L^2}+\|(\sqrt\rho_2\du)(t)\|_{L^2}
\leq e^{A(t)}
\biggl(\|\dr(0)\|_{L^2}+\|(\sqrt\rho_2\du)(0)\|_{L^2}
+\int_0^te^{-A(\tau)}\|\sqrt\rho_2\df\|_{L^2}\biggr)
 \end{equation}
 with $\displaystyle{A(t):=\int_0^t
\biggl(\Bigl\|\frac{\nabla\rho_1} {\sqrt\rho_2}\Bigr\|_{L^\infty}+\Big\|\frac{\nabla\Pi_1}{\rho_1\sqrt\rho_2}\Big\|_{L^\infty}+\|\nabla u_1\|_{L^\infty}\biggr)\,d\tau.}$
\end{prop}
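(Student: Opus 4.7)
The strategy is the standard weighted energy method: write down the system satisfied by the differences $\dr$, $\du$ and $\dPi:=\Pi_2-\Pi_1$, run two parallel $L^2$ estimates whose weight in the velocity equation is $\rho_2$ (so as to exploit the continuity equation for $\rho_2$), and conclude with Gronwall's lemma.

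First, subtracting the two continuity equations yields immediately
$$
\d_t\dr+u_2\cdot\nabla\dr=-\du\cdot\nabla\rho_1.
$$
Subtracting the two momentum equations after using $\rho_1(\d_tu_1+u_1\cdot\nabla u_1)=-\nabla\Pi_1+\rho_1f_1$ and grouping the material derivative around $\rho_2$ and $u_2$ gives
$$
\rho_2\bigl(\d_t\du+u_2\cdot\nabla\du\bigr)+\rho_2\,\du\cdot\nabla u_1+\nabla\dPi=\rho_2\,\df+\frac{\dr}{\rho_1}\nabla\Pi_1,
$$
the cancellation of the $f_1$ contribution being the key algebraic point.

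Next, I would test the $\dr$ equation against $\dr$ (using $\div u_2=0$) and the $\du$ equation against $\du$. For the latter, the identity $\d_t\rho_2+\div(\rho_2u_2)=0$ combined with $\div u_2=0$ turns the first two terms on the left into $\tfrac12\frac{d}{dt}\|\sqrt{\rho_2}\,\du\|_{L^2}^2$, while the pressure term $\int\nabla\dPi\cdot\du\,dx$ vanishes by integration by parts since $\du$ is divergence-free. Applying Cauchy--Schwarz to the remaining terms and factoring $\sqrt{\rho_2}$ appropriately, I obtain
$$
\frac12\frac{d}{dt}\|\dr\|_{L^2}^2\leq\Bigl\|\frac{\nabla\rho_1}{\sqrt{\rho_2}}\Bigr\|_{L^\infty}\|\sqrt{\rho_2}\,\du\|_{L^2}\|\dr\|_{L^2},
$$
$$
\frac12\frac{d}{dt}\|\sqrt{\rho_2}\,\du\|_{L^2}^2\leq\|\nabla u_1\|_{L^\infty}\|\sqrt{\rho_2}\,\du\|_{L^2}^2+\|\sqrt{\rho_2}\,\df\|_{L^2}\|\sqrt{\rho_2}\,\du\|_{L^2}+\Bigl\|\frac{\nabla\Pi_1}{\rho_1\sqrt{\rho_2}}\Bigr\|_{L^\infty}\|\dr\|_{L^2}\|\sqrt{\rho_2}\,\du\|_{L^2}.
$$

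Finally, the usual device $\tfrac12(y^2)'\leq hy\Rightarrow y'\leq h$ (implemented rigorously by replacing $y$ by $\sqrt{y^2+\eps^2}$ and letting $\eps\to0$) converts these two bounds into first-order differential inequalities for $\|\dr\|_{L^2}$ and $\|\sqrt{\rho_2}\,\du\|_{L^2}$; adding them produces
$$
\frac{d}{dt}\bigl(\|\dr\|_{L^2}+\|\sqrt{\rho_2}\,\du\|_{L^2}\bigr)\leq A'(t)\bigl(\|\dr\|_{L^2}+\|\sqrt{\rho_2}\,\du\|_{L^2}\bigr)+\|\sqrt{\rho_2}\,\df\|_{L^2},
$$
and Gronwall's lemma then yields \eqref{eq:uniq}. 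The main subtlety is not analytical but algebraic: the momentum equations must be subtracted so that $\rho_2$ (and not $\rho_1$) plays the role of weight, which permits using the transport equation for $\rho_2$ and decouples the estimate from $\d_tu_1$. The only genuine point of rigour is to verify that $\nabla\dPi$ is integrable enough for the pressure term to vanish by integration by parts, but this follows a posteriori from the $\du$-equation, since all other terms lie in $L^2$ under the stated hypotheses.
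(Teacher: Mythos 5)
Your proof is correct and follows essentially the same route as the paper: subtract the two systems, write the momentum difference with $\rho_2$ as the weight so the material derivative structure of $\rho_2$ can be exploited, test against $\dr$ and $\du$, note that the pressure term vanishes because $\du$ is divergence-free, and conclude with Gronwall's lemma. The algebraic identity $\rho_2(\d_t\du+u_2\cdot\nabla\du)+\rho_2\du\cdot\nabla u_1+\nabla\dPi=\rho_2\df+\frac{\dr}{\rho_1}\nabla\Pi_1$ is precisely the one the paper uses, and the two resulting differential inequalities match the paper's \eqref{stab1} and its companion estimate.
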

\begin{proof}
On the one hand, as
$$\d_t\dr+u_2\cdot\nabla\dr=-\du\cdot\nabla\rho_1,$$ taking the $L^2$ inner product
with $\dr$ and integrating by parts in the second term of the left-hand side yields
\begin{equation}\label{stab1}
\|\dr(t)\|_{L^2}\leq  \|\dr(0)\|_{L^2}+ \int_0^t \|(\sqrt\rho_2\du)\|_{L^2}
\Bigl\|\frac{\nabla\rho_1} {\sqrt\rho_2}\Bigr\|_{L^\infty}\,d\tau.
\end{equation}
On the other hand, denoting  $\nabla\dPi:=\nabla\Pi_2-\nabla\Pi_1,$ we notice that
$$
\rho_2(\d_t\du+u_2\cdot\nabla\du)+\nabla\dPi
=\rho_2\biggl(\df+\frac{\dr}{\rho_1\rho_2}\nabla\Pi_1-\du\cdot\nabla u_1\biggr).$$
So taking the $L^2$ inner product of the second equation with 
$\du,$ integrating by parts and using the fact that $\div\du=0$ and that
$$
\d_t\rho_2+u_2\cdot\nabla\rho_2=0,
$$
we eventually get 
$$
\|(\sqrt\rho_2\du)(t)\|_{L^2}
\leq \|(\sqrt\rho_2\du)(0)\|_{L^2}+ \int_0^t\!\biggl(\!\|\sqrt\rho_2\df\|_{L^2}
+\|\dr\|_{L^2}\Bigl\|\frac{\nabla\Pi_1}{\rho_1\sqrt\rho_2}\Bigr\|_{L^\infty}\!+\|\nabla u_1\|_{L^\infty}\|\sqrt\rho_2\du\|_{L^2}\!\biggr)d\tau.
 $$
Adding up Inequality \eqref{stab1} to 
the above inequality and applying Gronwall lemma
completes the proof of the proposition.
\end{proof}
\smallbreak\noindent{\it Proof of uniqueness in Theorem \ref{th:main}.}
Consider two solutions $(\rho_1,u_1,\nabla\Pi_1)$ and
$(\rho_2,u_2,\nabla\Pi_2)$ of \eqref{eq:ddeuler} with the same data.
Under the assumptions of Theorem \ref{th:main}, 
it is clear that the velocity and pressure fields  
satisfy the assumptions of the above proposition. 
As concerns the density, we notice that $u_i\in\cC([0,T];L^2)$ and $\nabla\rho_i\in\cC([0,T];L^\infty)$
for $i=1,2$ 
implies that $\d_t\rho_i\in\cC([0,T];L^2).$ Hence we have $\dr\in\cC^1([0,T];L^2).$
Therefore Inequality \eqref{eq:uniq} implies that 
$(\rho_1,u_1,\nabla\Pi_1)\equiv(\rho_2,u_2,\nabla\Pi_2)$ on $[0,T]\times\R^N.$
This completes the proof of the uniqueness in Theorem \ref{th:main}.


\subsection{The proof of existence in Theorem \ref{th:main}: the case $2\leq p<\infty$}
\label{ss:main}

We notice that, formally, the density-dependent incompressible Euler equations 
are equivalent to\footnote{Recall that $\cP$ stands for the Leray projector over
divergence free vector-fields.} 
\begin{equation}\label{eq:ddeuler1}
\left\{\begin{array}{l}
\d_ta+u\cdot\nabla a=0\quad\hbox{with} \ a:=1/\rho,\\[1ex]
\d_tu+u\cdot\nabla u+a\nabla\Pi=f,\\[1ex]
-\div(a\nabla\Pi)= \div(u\cdot\nabla\cP u)-\div f.
\end{array}\right.
\end{equation}
Let us give conditions under which this equivalence is rigorous. 
\begin{lem}\label{l:equiv}
Let $u$ be a time-dependent vector-field with coefficients
in $\cC^1([0,T]\times\R^N)$ and such that $\cQ u\in\cC^1([0,T];L^2).$
Assume that $\nabla\Pi\in\cC([0,T];L^2).$
Let $\rho$ be a continuous bounded function on $[0,T]\times\R^N$
which is positive and bounded away from $0.$ 

If in addition $\div u(0,\cdot)\equiv0$  in $\R^N$
then $(\rho,u,\nabla\Pi)$ is a solution to $\eqref{eq:ddeuler}$
if and only if $(a,u,\nabla\Pi)$ satisfies $\eqref{eq:ddeuler1}.$
 \end{lem}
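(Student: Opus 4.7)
The direction \eqref{eq:ddeuler} $\Longrightarrow$ \eqref{eq:ddeuler1} is purely algebraic. Since $\rho$ is continuous, bounded and bounded away from zero, the chain rule converts the density equation into $\d_t a + u\cdot\nabla a = 0$ with $a := 1/\rho$. Dividing the momentum equation by $\rho$ yields the second equation of \eqref{eq:ddeuler1}, and taking the divergence of this velocity equation while using the incompressibility $\div u \equiv 0$ (so that $\d_t\div u \equiv 0$ and $u = \cP u$) produces the elliptic relation $-\div(a\nabla\Pi) = \div(u\cdot\nabla\cP u) - \div f$.

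For the converse direction, reversing the chain rule and multiplying the momentum equation by $\rho$ recover the density and momentum equations of \eqref{eq:ddeuler}. The nontrivial point is to show that the constraint $\div u(0,\cdot) \equiv 0$ propagates to $\div u \equiv 0$ throughout $[0,T]\times\R^N$. My plan is to derive a closed evolution equation for $w := \cQ u$. Applying $\cQ$ to the momentum equation of \eqref{eq:ddeuler1} gives
$$
\d_t\cQ u + \cQ(u\cdot\nabla u) + \cQ(a\nabla\Pi) = \cQ f.
$$
By the third equation of \eqref{eq:ddeuler1}, the vector-fields $a\nabla\Pi$ and $f - u\cdot\nabla\cP u$ share the same divergence; since on $\R^N$ the image under $\cQ$ of a sufficiently decaying vector-field is uniquely determined by its divergence (recall $\cQ v = -\nabla(-\Delta)^{-1}\div v$ from Remark \ref{r:CZ}), it follows that $\cQ(a\nabla\Pi) = \cQ f - \cQ(u\cdot\nabla\cP u)$. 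Substituting and writing $u = \cP u + \cQ u$ produces the closed equation
$$
\d_t w + \cQ(u\cdot\nabla w) = 0, \qquad w|_{t=0} = 0.
$$

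An $L^2$ energy estimate then finishes the argument. Pairing with $w$ in $L^2$ and using the self-adjointness of $\cQ$ together with $\cQ w = w$ (as $w$ lies in the range of $\cQ$), one obtains
$$
\tfrac12\tfrac{d}{dt}\|w\|_{L^2}^2 = -(u\cdot\nabla w,w)_{L^2} = \tfrac12\int_{\R^N}\div u\,|w|^2\,dx,
$$
where the integration by parts is justified via mollification, using $w \in \cC^1([0,T];L^2)$ and the $\cC^1$ regularity of $u$. A Gronwall argument together with $w(0) = 0$ then forces $w \equiv 0$, whence $\div u \equiv 0$. The main subtlety is the regularity justification for this integration by parts, since the statement only assumes $\cQ u \in \cC^1([0,T];L^2)$ rather than any $H^1$-regularity; however, the $\cC^1$ hypothesis on $u$ combined with a cut-off/mollifier approximation makes the step rigorous, and in every situation where the lemma is applied the velocity enjoys the extra bound $\nabla u \in L^1_t L^\infty_x$, so the argument is routine.
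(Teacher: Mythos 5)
Your proof is correct and follows essentially the same route as the paper: apply $\cQ$ to the momentum equation of \eqref{eq:ddeuler1}, use the elliptic relation to cancel $\cQ(a\nabla\Pi)$ against $\cQ f-\cQ(u\cdot\nabla\cP u)$ and obtain the closed equation $\d_t\cQ u+\cQ(u\cdot\nabla\cQ u)=0$ (you make the decomposition $u=\cP u+\cQ u$ explicit, which the paper leaves to the reader), then run the $L^2$ energy estimate, using $\cQ^T=\cQ$, $\cQ^2=\cQ$ and integration by parts to reach $\frac{d}{dt}\|\cQ u\|_{L^2}^2=\int\div u\,|\cQ u|^2\,dx$, and conclude by Gronwall from $\cQ u(0)=0$. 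Your remarks about justifying the integration by parts and the $L^1_tL^\infty_x$ control on $\nabla u$ are correct technical padding but not a departure from the paper's argument.
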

 \begin{proof} If $(\rho,u,\nabla\Pi)$ satisfies \eqref{eq:ddeuler}
 then, owing to $\rho>0,$ we see that $a:=1/\rho$ satisfies the
 first equation of \eqref{eq:ddeuler1}. 
 Next,  applying Operator $\div$ to the velocity equation of \eqref{eq:ddeuler} divided by
 $\rho,$ and using that $\cP u=u$ yields the third equation of \eqref{eq:ddeuler1}. 
 \smallbreak
 Conversely, if $(a,u,\nabla\Pi)$ satisfies \eqref{eq:ddeuler1}, it is obvious, owing to positivity, 
 that $\rho:=1/a$ satisfies the density equation of \eqref{eq:ddeuler}. 
 In   order to justify  that the other two equations are  satisfied, 
 it is only a matter of proving that  $\div u\equiv0.$
 For that, one may 
 apply $\cQ$  to the second equation. Then, using the third equation, we discover that
 $$\d_t\cQ u+\cQ(u\cdot\nabla\cQ u)=0.$$
 Recall that  $\cQ u\in\cC^1([0,T];L^2).$ 
 Therefore, taking the $L^2$ inner product with $\cQ u,$ we get
 $$
 \frac12\frac d{dt}\|\cQ u\|_{L^2}^2+\bigl(\cQ(u\cdot\nabla\cQ u)\mid \cQ u\bigr)_{L^2}=0.
 $$
 As  $\cQ^T=\cQ$ and $\cQ^2=\cQ,$  we thus get after  integrating by parts in the second term:
 $$
  \frac d{dt}\|\cQ u\|_{L^2}^2=\int |\cQ u|^2\div u\,dx,
  $$
  and, as $\cQ u(0,\cdot)=0,$ Gronwall lemma ensures that $\cQ u\equiv0.$
  Hence $\div u=0.$
\end{proof}

As explained in Lemma \ref{l:equiv}, it suffices to solve System \eqref{eq:ddeuler1}.
So, for $T>0,$ let us introduce the set $E_T$ of functions $(a,u,\nabla\Pi)$ such that 
$$
\begin{array}{lll}a\in\cC_b([0,T]\times\R^N),&&
\nabla a\in\cC([0,T];B^{s-1}_{p,r}),\\[1ex]
 u\in\cC^1([0,T];L^2)\cap \cC([0,T];B^{s}_{p,r}),&&
\nabla\Pi\in\cC([0,T];L^2)\cap L^1([0,T];B^{s}_{p,r}).\end{array}
$$
We denote $$a_*:=\inf_{x\in\R^N}a_0(x),\quad  
 a^*:=\sup_{x\in\R^N}a_0(x),\quad \rho_*:=\inf_{x\in\R^N}\rho_0(x)\ \hbox{ and }\  
 \rho^*:=\sup_{x\in\R^N}\rho_0(x).$$
 
 Note that if $\rho$ is bounded and bounded away from zero, and satisfies
 $\nabla\rho\in B^{s-1}_{p,r}$ then the same properties hold for $a$
 (and conversely). This may be easily shown by combining
 Propositions \ref{p:embed} and \ref{p:comp}. Moreover, there
 exists some constant $C$ depending only on $a_*,$ $a^*,$ $N$  and on the regularity
 parameters such that
 $$
 C^{-1}\|\nabla\rho\|_{B^{s-1}_{p,r}}\leq  \|\nabla a\|_{B^{s-1}_{p,r}}
 \leq C\|\nabla\rho\|_{B^{s-1}_{p,r}}.
 $$
 This fact will be used repeatedly in the rest of the paper. 
  \subsubsection*{Step 1. Construction  of a sequence of approximate  solutions}
As a first step for solving \eqref{eq:ddeuler1},  we  construct a sequence  
$(a^n,u^n,\nabla\Pi^n)_{n\in\N}$ of global approximate solutions 
which belong to $E_T$ for all $T>0.$
 \smallbreak
 For doing so, one may argue by induction. We first set  $(a^0,u^0,\nabla\Pi^0):=(a_0,u_0,0).$
Next, we assume  that $(a^n,u^n,\nabla\Pi^n)$ has been constructed over $\R^+,$
belongs to the space $E_T$ for all $T>0$ and that there exists a positive time
$T^*$ such that for all $t\in[0,T^*],$
\begin{eqnarray}\label{eq:loinduvide}
&a_*\leq a^{n}(t,x)\leq a^*,\\
\label{eq:bound4}
&\|\nabla a^{n}(t)\|_{B^{s-1}_{p,r}}\leq  2\|\nabla a_0\|_{B^{s-1}_{p,r}}\quad\hbox{for all }\ 
t\in[0,T^*],\\\label{eq:energy0}
&\|\sqrt{\rho^n(t)}u^n(t)\|_{L^2}\leq \sqrt{\rho^*a^*}\bigl(4\|\sqrt{\rho_0}\,u_0\|_{L^2}
+8\sqrt{\rho^*}\|f\|_{L^1_t(L^2)}\bigr)\quad\hbox{with }\ 
\rho^n:=1/a^n,\\\label{eq:hyp1}
&U^n(t)\leq 4U_0(t)+C_0\rho^*A_0\|\div f\|_{L^1_t(B^{s-1}_{p,r})}
+C_0(\rho^*A_0)^{\gamma+1}\bigl(\|u_0\|_{L^2}+\|f\|_{L^1_t(L^2)}\bigr),\\\label{eq:hyp2}
&a_*\|\nabla\Pi^n\|_{L^1_t(B^s_{p,r})}\leq C\biggl(\Int_0^t(U^n(\tau))^2\,d\tau
+\|\div f\|_{L_t^1(B^{s-1}_{p,r})}\hspace{3cm}\nonumber\\
&\hspace{5cm}+(\rho^*A_0)^\gamma
\bigl(\|u_0\|_{L^2}+\|f\|_{L_t^1(L^2)}\bigr)\biggr),\\\label{eq:hyp3}
&\|\nabla\Pi^n\|_{L_t^1(L^2)}\,d\tau\leq\sqrt{\rho^*}\|\sqrt{\rho_0}\,u_0\|_{L^2}
+3\rho^*\|f\|_{L^1_t(L^2)}\end{eqnarray}
with   $A_0:=a^*+\|Da_0\|_{B^{s-1}_{p,r}},$ 
$U_0(t):=\|u_0\|_{B^s_{p,r}}+\|f\|_{L^1_t(B^s_{p,r})}$ and  
$U^n(t):=\|u^n(t)\|_{B^{s}_{p,r}}.$
The positive exponent $\gamma$ is given by Proposition \ref{p:elliptic}. 
The constants $C_0$ and $C$ depend only on $(s,p,r)$ and $N,$
and may be made explicit from the following computations
(in fact one can take $C_0=2C^2$ with $C$ large enough).
\medbreak
Denoting by $\psi^n$ the flow of $u^n,$ (which belongs to 
$\cC^1(\R^+\times\R^N)$ owing to $u^n\in\cC(\R^+;B^s_{p,r})$
and to $B^{s-1}_{p,r}\hookrightarrow\cC_b$), 
we set 
$$a^{n+1}(t,x):=a_0((\psi_t^n)^{-1}(x))\quad\hbox{and}\quad
\rho^{n+1}(t,x):=\rho_0((\psi_t^n)^{-1}(x)).
$$
As $\psi_t^n$ is a diffeomorphism over $\R^N$ for all $t\geq0,$
we  have  
$$
\|a^{n+1}(t)\|_{L^\infty}=\|a_0\|_{L^\infty}=a^*\quad\hbox{and}\quad
\|\rho^{n+1}(t)\|_{L^\infty}=\|\rho_0\|_{L^\infty}=\rho^*.
$$
Hence \eqref{eq:loinduvide} is satisfied by $a^{n+1}.$ 
 In addition, we have 
 $$
 \d_ta^{n+1}+u^n\cdot\nabla a^{n+1}=0
 $$
 so that  for all $i\in\{1,\cdots,N\},$ 
$$
\d_t\d_ia^{n+1}+u^n\cdot\nabla\d_ia^{n+1}=-\d_iu^n\cdot\nabla a^{n+1}.
$$
As $\d_ia^{n+1}_{|t=0}=\d_ia_0\in B^{s-1}_{p,r}$ by assumption, 
(a slight generalization of) Proposition \ref{p:transport} 
combined with Gronwall lemma guarantees that $\nabla a^{n+1}\in\cC(\R^+;B^{s-1}_{p,r})$
and that
\begin{equation}\label{eq:bound1}
\|\nabla a^{n+1}(t)\|_{B^{s-1}_{p,r}}\leq  e^{C\int_0^tU^n(\tau)\,d\tau}\|\nabla a_0\|_{B^{s-1}_{p,r}}.
\end{equation}
So if we assume that $T^*$ has been chosen so that
\begin{equation}\label{eq:time1}
C\int_0^{T^*}U^n(t)\,dt\leq\log2
\end{equation}
then $a^{n+1}$ satisfies \eqref{eq:bound4}.\smallbreak
Next,  we want to define $u^{n+1}$ as the unique solution  in $\cC(\R^+;B^s_{p,r})$ of the
 transport equation: 
\begin{equation}\label{eq:un}
\d_tu^{n+1}+u^n\cdot\nabla u^{n+1}=-a^{n+1}\nabla\Pi^{n}+f,\qquad u^{n+1}_{|t=0}=u_0.
\end{equation}
That  the right-hand side belongs to $L^1_{loc}(\R^+;B^s_{p,r})$ is a consequence of
Corollary \ref{c:op} and of the embedding $B^{s-1}_{p,r}\hookrightarrow L^\infty.$
In addition, we have for a.e. positive time
\begin{equation}\label{eq:bound3a}
\|a^{n+1}\nabla\Pi^n\|_{B^s_{p,r}}\leq C\bigl(\|a^{n+1}\|_{L^\infty}+\|\nabla a^{n+1}\|_{B^{s-1}_{p,r}}\bigr)
\|\nabla \Pi^n\|_{B^{s}_{p,r}}.
\end{equation}
So finally, the existence of $u^{n+1}\in\cC(\R^+;B^s_{p,r})$ is ensured by 
Proposition \ref{p:transport}, and we have
\begin{eqnarray}\label{eq:bound2}
&&\|u^{n+1}(t)\|_{B^{s}_{p,r}}\leq  e^{C\int_0^tU^n(\tau)\,d\tau}\biggl(\|u_0\|_{B^{s}_{p,r}}
\hspace{3cm}\nonumber\\
&&\qquad+\int_0^te^{-C\int_0^\tau U^n(\tau')\,d\tau'}\Bigl(\bigl(\|a^{n+1}\|_{L^\infty}
+\|\nabla a^{n+1}\|_{B^{s-1}_{p,r}}\bigr)\|\nabla \Pi^n\|_{B^{s}_{p,r}}
+\|f\|_{B^s_{p,r}}\Bigr)\,d\tau\biggr).
\end{eqnarray}
Therefore, if we restrict our attention to those $t$ that are in $[0,T^*]$
with $T^*$ satisfying \eqref{eq:time1},  we see that for all $t\in[0,T^*],$
$$
U^{n+1}(t)\leq  2U_0(t)
+CA_0
\int_0^t\|\nabla\Pi^n\|_{B^s_{p,r}}\,d\tau\quad\hbox{with }\ 
A_0:=a^*+\|\nabla a_0\|_{B^{s-1}_{p,r}}.
$$
So if we assume that $T^*$ and $C_0$ have been chosen so that
\begin{equation}\label{eq:time2}
2C^2\rho^*A_0\int_0^{T^*}U^n(t)\,dt\leq1\quad\hbox{and }\ C_0=2C^2
\end{equation}
then taking advantage of Inequalities \eqref{eq:hyp2} and \eqref{eq:hyp1}, 
we see that $u^{n+1}$ satisfies \eqref{eq:hyp1} on $[0,T^*].$

Let us now prove \eqref{eq:energy0} for $u^{n+1}.$
First, we notice  that the right-hand side of \eqref{eq:un} belongs to $\cC(\R^+;L^2)$ so that
$u^{n+1}$ is in $\cC^1(\R^+;L^2).$ 
As $\rho^{n+1}$ is bounded and 
$\cC^1$ with respect to the time and space variables, this allows us to  take the $L^2$ inner product of the equation
for $u^{n+1}$ with $\rho^{n+1}u^{n+1}$. We readily get
\begin{equation}\label{eq:energy1}
\frac12\frac d{dt}\|\sqrt{\rho^{n+1}}\,u^{n+1}\|_{L^2}^2-\int\rho^{n+1}u^{n+1}\cdot f\,dx=\frac12\int\rho^{n+1}|u^{n+1}|^2\div u^n\,dx
-\bigl(\nabla\Pi^n\mid u^{n+1}\bigr)_{L^2}.
\end{equation}
Let us point out that  $u^n$ and $u^{n+1}$  \emph{need not} be divergence-free, so 
that the right-hand side may be nonzero. 
However, from  the above inequality, it is easy to get
$$\displaylines{
\|(\sqrt{\rho^{n+1}}\,u^{n+1})(t)\|_{L^2}\leq
\|{\sqrt\rho_0}\,u_0\|_{L^2}\hfill\cr\hfill+\int_0^t\biggl(\sqrt{a^*}\|\nabla\Pi^n\|_{L^2}
+\sqrt{\rho^*}\|f\|_{L^2}
+\frac12\|\sqrt{\rho^{n+1}}u^{n+1}\|_{L^2}\|\div u^n\|_{L^\infty}\biggr)\,dx.}
$$
So, if we assume that $C$ has been taken large enough in \eqref{eq:time1}
then Gronwall's lemma implies that 
\begin{equation}\label{eq:bound5a}
\|(\sqrt{\rho^{n+1}}\,u^{n+1})(t)\|_{L^2}\leq 2\bigl(\|{\sqrt\rho_0}\,u_0\|_{L^2}
+\sqrt{\rho^*}\|f\|_{L_t^1(L^2)}+\sqrt{a^*}\|\nabla\Pi^n\|_{L_t^1(L^2)}\bigr).
\end{equation}
Now, putting the above inequality together with Inequality \eqref{eq:hyp3}
ensures that Inequality \eqref{eq:energy0} is also satisfied by $u^{n+1}$
on $[0,T^*].$
 \smallbreak
 To finish with, we have to construct  the approximate pressure $\Pi^{n+1}.$ 
For that, we aim at solving  the following  
elliptic equation
\begin{equation}\label{eq:pres}
\div(a^{n+1}\nabla\Pi^{n+1})=\div(f-u^{n+1}\cdot\nabla\cP u^{n+1})
\end{equation}
for every positive time. 

We have already proved that $a^{n+1}$ satisfies the required ellipticity condition
through \eqref{eq:loinduvide}.
Moreover, as $u^{n+1}\in\cC(\R^+;B^s_{p,r}),$ Remark \ref{r:CZ} ensures that 
$\nabla\cP u^{n+1}$ is in $\cC(\R^+;B^{s-1}_{p,r}).$ As   
$B^{s-1}_{p,r}\hookrightarrow L^\infty$ and $u^{n+1}\in\cC(\R^+;L^2),$  we thus have 
$u^{n+1}\cdot\nabla\cP u^{n+1}\in \cC(\R^+;L^2)$  and 
$$\begin{array}{lll}
\|u^{n+1}\cdot\nabla\cP u^{n+1}\|_{L^2}&\leq& \sqrt{a^*}
\bigl\|\sqrt{\rho^{n+1}}u^{n+1}\bigr\|_{L^2}\|\nabla\cP u^{n+1}\|_{L^\infty},\\[1ex]
&\leq& C \sqrt{a^*}\bigl\|\sqrt{\rho^{n+1}}\,u^{n+1}\bigr\|_{L^2}
\|\nabla u^{n+1}\|_{B^{s-1}_{p,r}}.\end{array}
$$
Therefore Lemma \ref{l:laxmilgram} guarantees that \eqref{eq:pres} has
a solution $\nabla\Pi^{n+1}$ in $\cC(\R^+;L^2)$ which satisfies
\begin{equation}\label{eq:bound6}
a_*\|\nabla\Pi^{n+1}\|_{L_t^1(L^2)}\leq \|f\|_{L^1_t(L^2)}
+C\sqrt{a^*}\int_0^tU^{n+1}\|\sqrt{\rho^{n+1}}u^{n+1}\|_{L^2}\,d\tau.
\end{equation}
Let us insert Inequality \eqref{eq:bound5a} in the above inequality. 
We see that  if  $T^*$ has been chosen so that 
\begin{equation}\label{eq:time4}
4Ca^*\rho^*\int_0^{T^*}U^{n+1}\,d\tau\leq1
\end{equation}
then Inequality \eqref{eq:bound6} implies that
$$\|\nabla\Pi^{n+1}\|_{L_t^1(L^2)}\leq \frac32\rho^*\|f\|_{L^1_t(L^2)}
+\frac12\sqrt{\rho^*}\|\sqrt{\rho_0}u_0\|_{L^2}
+\frac12\|\nabla\Pi^{n}\|_{L_t^1(L^2)},
$$
hence Inequality \eqref{eq:hyp3} is satisfied by $\nabla\Pi^{n+1}$ on $[0,T^*].$
\smallbreak
In order to prove that $\nabla\Pi^{n+1}$ belongs to $L^1_{loc}(\R^+;B^s_{p,r}),$
one may apply   the second part of Proposition~\ref{p:elliptic}.
Indeed, because,  owing to $\div\cP u^{n+1}=0,$ we have 
$$\div(u^{n+1}\cdot\nabla\cP u^{n+1})=
\nabla u^{n+1}:\nabla\cP u^{n+1}$$ 
and  as $B^{s-1}_{p,r}$ is an algebra, the term 
$\div(u^{n+1}\cdot\nabla\cP u^{n+1})$ is in $B^{s-1}_{p,r}$ and 
$$
\|\div(u^{n+1}\cdot\nabla\cP u^{n+1})\|_{B^{s-1}_{p,r}}\leq C (U^{n+1})^2.$$
Hence  Proposition \ref{p:elliptic}  implies that for all $t\in\R^+,$
  $$\displaylines{
  a_*\|\nabla\Pi^{n\!+\!1}\|_{L_t^1(B^s_{p,r})}\leq
 C\biggl(\int_0^t(U^{n\!+\!1})^2\,d\tau\,+\|\div f\|_{L_t^1(B^{s-1}_{p,r})}\hfill\cr\hfill
 +\bigl(1+\rho*\|Da^{n\!+\!1}\|_{L_t^\infty(B^{s-1}_{p,r})}\bigr)^\gamma
\bigl(\|f\|_{L_t^1(L^2)}+\|u^{n\!+\!1}\cdot\nabla\cP u^{n+1}\|_{L_t^1(L^2)}\bigr)\Bigr),}
  $$
  whence, using \eqref{eq:bound4} at rank $n+1$ and H\"older inequality,  we get
    $$\displaylines{
  a_*\|\nabla\Pi^{n\!+\!1}\|_{L_t^1(B^s_{p,r})}\leq
 C\biggl(\int_0^t(U^{n\!+\!1})^2\,d\tau\,+\|\div f\|_{L_t^1(B^{s-1}_{p,r})}\hfill\cr\hfill
 +(\rho^*A_0)^\gamma
\bigl(\|f\|_{L_t^1(L^2)}+\sqrt{a^*}\|\sqrt{\rho^{n+1}}u^{n+1}\|_{L^\infty_t(L^2)}
\int_0^tU^{n\!+\!1}\,d\tau\bigr)\biggr).}
  $$
  Taking advantage of Inequality \eqref{eq:energy0} at rank $n+1$ one can now conclude that
  if \eqref{eq:time4} holds then $\nabla\Pi^{n+1}$ satisfies \eqref{eq:hyp2}. 
  
  At this stage we have proved that if Inequalities \eqref{eq:loinduvide} to 
  \eqref{eq:hyp3} hold for $(a^n,u^n,\nabla\Pi^n)$ then they also 
  hold for $(a^{n+1},u^{n+1},\nabla\Pi^{n+1})$
  \emph{provided $T^*$ satisfies Inequalities \eqref{eq:time1}, 
  \eqref{eq:time2}  and \eqref{eq:time4}}. Note that \eqref{eq:time2} is the strongest condition. 
  Obviously it is satisfied if we set 
  \begin{equation}\label{eq:time}
  T^*:=\sup\Bigl\{t>0\,/\, \rho^*t A_0\Bigl(U_0(t)+\rho^*A_0\|\div f\|_{L^1_t(B^{s-1}_{p,r})}
+(\rho^*A_0)^{\gamma+1}
  \bigl(\|u_0\|_{L^2}+\|f\|_{L^1_t(L^2)}\bigr)\Bigr)\leq c\Bigr\}
  \end{equation}  
  for a  small  enough constant $c$ depending only on $s,$ $p$ and $N.$


  \subsubsection*{Step 2 . Convergence of the sequence}
    
Let $\tilde a^n:=a^n-a_0.$ In this step, we shall establish that $(\tilde a^n,u^n,\nabla\Pi^n)_{n\in\N}$
is a Cauchy sequence in $\cC([0,T^*];L^2).$

Let $\da^n:=\tilde a^{n+1}-\tilde a^n,$
$\du^n:=u^{n+1}-u^n$ and $\dPi^{n}:=\Pi^{n+1}-\Pi^n.$
We have for $n\geq2,$
\begin{equation}\label{eq:cauchy}\left\{\begin{array}{l}
\d_t\da^n+u^n\cdot\nabla\da^n=-\du^{n-1}\cdot\nabla a^n,\\[1ex]
\d_t\du^n+u^n\cdot\nabla\du^n=-\du^{n-1}\cdot\nabla u^n-a^n\nabla\dPi^{n-1}
-\da^n\nabla\Pi^n,\\[1ex]
\div(a^{n-1}\nabla\dPi^{n-1})=-\div\bigl(\du^{n-1}\cdot\nabla\cP u^n
+u^{n-1}\cdot\nabla\cP\du^{n-1}+\da^{n-1}\nabla\Pi^n\bigr).\end{array}
\right.
\end{equation}
For all $n\in\N,$ we have $\d_t\tilde a^{n+1}=-u^n\cdot\nabla a^{n+1}.$
So, given that, according to the previous step,  $u^n\in\cC([0,T^*];L^2)$
and $\nabla a^{n+1}\in\cC_b([0,T^*]\times\R^N),$
and that $\tilde a^{n+1}_{|t=0}=0,$ 
we discover that 
$\tilde  a^{n+1},$ and thus also $\da^n,$ are in $\cC^1([0,T^*];L^2).$
Taking the $L^2$ inner product of the equation for $\da^n$ with $\da^n,$
we thus get
$$
\frac12\frac d{dt}\|\da^n\|_{L^2}^2=\frac12\int (\da^n)^2\div u^n\,dx-\int \du^{n-1}\cdot\nabla a^n
\,\da^n\,dx,
$$
whence for all $t\in[0,T^*],$
\begin{equation}\label{eq:cauchy1}
\|\da^n(t)\|_{L^2}\leq \frac12\int_0^t\|\div u^n\|_{L^\infty}\|\da^n\|_{L^2}\,d\tau
+\int_0^t\|\nabla a^n\|_{L^\infty}\|\du^{n-1}\|_{L^2}\,d\tau.
\end{equation}
Next, taking the $L^2$ inner product of the equation for $\du^n$ with $\rho^{n+1}\du^n,$
 performing integration by parts and using the equation for $\rho^{n+1},$ we get
$$
\frac12\frac d{dt}\int\!\rho^{n\!+\!1}|\du^n|^2\,dx=\frac12\int\! \rho^{n+1}|\du^n|^2\div u^n\,dx
-\int\! \rho^{n\!+\!1}\du^{n}\cdot\bigl(\du^{n-1}\cdot\nabla u^n+a^n\nabla\dPi^{n-1}+\da^n\nabla\Pi^n\bigr)\,dx.
$$
Hence
\begin{eqnarray}
&&\|\sqrt{\rho^{n+1}(t)}\du^n(t)\|_{L^2}\leq 
\frac12\Int_0^t\|\div u^n\|_{L^\infty}\|\sqrt{\rho^{n+1}}\du^n\|_{L^2}\,d\tau\nonumber\\\label{eq:cauchy2}
&&\hspace{2cm}+\Int_0^t\bigl(\|\nabla u^n\|_{L^\infty}\|\du^{n-1}\|_{L^2}
+\|a^n\|_{L^\infty}\|\nabla\dPi^{n-1}\|_{L^2}
+\|\nabla\Pi^n\|_{L^\infty}\|\da^n\|_{L^2}\bigr)\,d\tau.
\end{eqnarray}
Adding up Inequalities \eqref{eq:cauchy1} and \eqref{eq:cauchy2}, applying Gronwall lemma and 
using the fact that $\rho^{n+1}\geq\rho_*$ and the bounds stated in the first step, 
we thus get for all  $t\in[0,T^*],$
\begin{equation}\label{eq:cauchy3}
\|(\da^n,\du^n)(t)\|_{L^2}\leq C_{T^*}\biggl(\int_0^t\|(\da^{n-1},\du^{n-1})(\tau)\|_{L^2}\,d\tau
+\int_0^t\|\nabla\dPi^{n-1}(\tau)\|_{L^2}\,d\tau\biggr),
\end{equation}
where the constant $C_{T^*}$ depends only on $T^*$ and on the initial data. 
\smallbreak
In order to bound $\nabla\dPi^{n-1},$ we shall use that for any $C^1$ vector-fields $a$ and $b,$ we have
$$
\div(a\cdot\nabla b)=\div(b\cdot\nabla a)+\div(a\,\div b)-\div (b\,\div a).
$$
Applying this to $a=u^{n-1}$ and $b=\cP \du^{n-1}$ and bearing in mind  that 
$\div\cP\du^{n-1}=0,$
we deduce from the third equation of \eqref{eq:cauchy} that
$$
\div(a^{n-1}\nabla\dPi^{n-1})=\div\bigl(\cP\du^{n-1}\div u^{n-1}-\cP\du^{n-1}\cdot\nabla u^{n-1}
-\du^{n-1}\cdot\nabla\cP u^{n}-\da^{n-1}\nabla\Pi^n\bigr).
$$
Therefore, Lemma \ref{l:laxmilgram} and the fact that $\|\cP\|_{\cL(L^2;L^2)}=1$ guarantee that 
$$
a_*\|\nabla\dPi^{n-1}\|_{L^2}
\leq\|\du^{n-1}\|_{L^2}\bigl(\|\div u^{n-1}\|_{L^\infty}
+\|\nabla u^{n-1}\|_{L^\infty}+\|\nabla\cP u^n\|_{L^\infty}\bigr)
+\|\da^{n-1}\|_{L^2}\|\nabla\Pi^n\|_{L^\infty}.
$$
Using  the uniform bounds of the previous step, we thus get for all $t\in[0,T^*],$
\begin{equation}\label{eq:cauchy4}
\|\nabla\dPi^{n-1}\|_{L^2}
\leq C_{T^*}\bigl(\|\du^{n-1}\|_{L^2}+\|\da^{n-1}\|_{L^2}\bigr).
\end{equation}
Plugging Inequality \eqref{eq:cauchy4} in Inequality \eqref{eq:cauchy3}, we end up with (up to a change of $C_{T^*}$), 
$$
\|(\da^n,\du^n)(t)\|_{L^2}\leq C_{T^*}\int_0^t\|(\da^{n-1},\du^{n-1})(\tau)\|_{L^2}\,d\tau.
$$
Arguing by induction, one may  conclude that 
$$
\sup_{t\in[0,T^*]}\|(\da^n,\du^n)(t)\|_{L^2}\leq \frac{(C_{T^*}T^*)^n}{n!}
\sup_{t\in[0,T^*]}\|(\da^0,\du^0)(t)\|_{L^2}.
$$
It is now obvious that  both  $(\tilde  a^n)_{n\in\N}$ and $(u^n)_{n\in\N}$ are Cauchy sequences in 
$\cC([0,T^*];L^2),$ hence converge to some functions $\tilde a$ and $u$
in $\cC([0,T^*];L^2).$ 
Taking advantage of  \eqref{eq:cauchy4}, it is also clear that  $(\nabla\Pi^n)_{n\in\N}$ 
 converges to some function $\nabla\Pi$
in $\cC([0,T^*];L^2).$ 


\subsubsection*{Step 3. Final checking}

Let  $a:=a_0+\tilde a.$
We now  have to check that $(a,u,\nabla\Pi)$ is indeed a solution to \eqref{eq:ddeuler} 
and that it has the properties stated in Theorem \ref{th:main}. 
{}From the previous step, we already know that $(a-a_0),$ $u$ and $\nabla\Pi$ 
are in $\cC([0,T^*];L^2).$ Moreover: 
\begin{itemize}
\item As $(\nabla a^n)_{n\in\N}$ 
is bounded in $L^\infty([0,T^*]; B^{s-1}_{p,r})$ and as Besov spaces
have the Fatou property, 
we  deduce that $\nabla a$ belongs to $L^\infty([0,T^*]; B^{s-1}_{p,r}).$
Since  $(a^n)_{n\in\N}$ is bounded in $L^\infty([0,T^*]\times\R^N),$
we also have $a\in L^\infty([0,T^*]\times\R^N).$
\item 
As  $(u^n)_{n\in\N}$ 
is bounded in $L^\infty([0,T^*]; B^{s}_{p,r}),$
we deduce that $u\in L^\infty([0,T^*]; B^{s}_{p,r}).$
\item Finally, as $(\nabla\Pi^n)_{n\in\N}$ 
is bounded in $L^1([0,T^*]; B^{s}_{p,r})$
we deduce that $\nabla\Pi$ belongs to $L^1([0,T^*]; B^{s}_{p,r}).$
\end{itemize}
Arguing by interpolation, we see that the above sequences converge 
strongly in every intermediate space between $\cC([0,T^*];L^2)$
and $\cC([0,T^*];B^s_{p,r})$ which is more than enough to pass
to the limit in the equations satisfied by $(a^n,u^n,\nabla\Pi^n).$
Hence $(a,u,\nabla\Pi)$ satisfies \eqref{eq:ddeuler1}.

 Passing to the limit in \eqref{eq:energy1}, we see that, in addition, $(\rho,u)$ satisfies
 the energy equality \eqref{eq:energy}. 
 
 Finally, the continuity properties of the solution with respect to the time may be recovered
 by using the equations satisfied by $(a,u,\nabla\Pi),$ and Proposition \ref{p:transport}. 
 
  
 \subsection{A continuation criterion}
 
 The key to the proof of Theorem \ref{th:BKM} 
  is the following lemma:
 \begin{lem}\label{l:continuation1}
  Let $(s,p,r)$ satisfy Condition $(C)$ with $1<p<\infty.$  Consider a solution 
  $(\rho,u,\nabla\Pi)$
 to $\eqref{eq:ddeuler}$ on $[0,T[\times\R^N$ such that\footnote{With the usual 
 convention if $r=\infty$}  $u\in\cC([0,T);B^s_{p,r})$ and
 $$\rho_*\leq\rho\leq\rho^*,\quad\rho\in\cC([0,T)\times\R^N)\ \hbox{ and }\ 
  \nabla\rho\in \cC([0,T);B^{s-1}_{p,r}).$$
  If in addition 
 \begin{equation}\label{eq:blowup}
 \int_0^T\bigl(\|\nabla u\|_{L^\infty}+\|\nabla\Pi\|_{B^{s-1}_{p,r}}\bigr)\,dt<\infty
\end{equation}
then
$$
\int_0^t\|\nabla\Pi\|_{B^s_{p,r}}\,d\tau+\sup_{0\leq t<T}\bigl(\|u(t)\|_{B^s_{p,r}}
+\|\nabla\rho\|_{B^{s-1}_{p,r}}\bigr)<\infty.
$$
 \end{lem}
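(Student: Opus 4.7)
The strategy is to combine transport estimates for $u$ and $\nabla\rho$ with the elliptic bound from Proposition~\ref{p:elliptic} (third part) into a Gronwall-type inequality whose integrable amplifier is precisely the quantity in~\eqref{eq:bu0}. Since $\rho$ is transported by the divergence-free flow $\psi_t$ of $u$, the bounds $\rho_*\le\rho\le\rho^*$ propagate. Differentiating $\rho(t,\cdot)=\rho_0\circ\psi_t^{-1}$ and using the Lagrangian estimate $\|D\psi_t^{\pm1}\|_{L^\infty}\le\exp(\int_0^t\|\nabla u\|_{L^\infty}d\tau)$ yield the pointwise bound
\[
K:=\sup_{0\le t<T}\|\nabla\rho(t)\|_{L^\infty}\le\|\nabla\rho_0\|_{L^\infty}\exp\!\Bigl(\int_0^T\|\nabla u\|_{L^\infty}\,d\tau\Bigr)<\infty,
\]
which is finite by~\eqref{eq:bu0}.

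Next I would write three coupled integral inequalities for $Y(t):=\|\nabla\rho(t)\|_{B^{s-1}_{p,r}}$, $Z(t):=\|u(t)\|_{B^s_{p,r}}$, and $\|\nabla\Pi(t)\|_{B^s_{p,r}}$. Applying Proposition~\ref{p:transport} at level $\sigma=s-1$ to $\partial_t\nabla\rho+u\cdot\nabla(\nabla\rho)=-\nabla u\cdot\nabla\rho$ and estimating the source by Corollary~\ref{c:op} (invoking $\|\nabla\rho\|_{L^\infty}\le K$) gives
\[
Y(t)\le C\,Y(0)+C\int_0^t\bigl(\|\nabla u\|_{L^\infty}\,Y(\tau)+K\,Z(\tau)\bigr)d\tau.
\]
The ``transport by itself'' case of Proposition~\ref{p:transport} applied to $\partial_t u+u\cdot\nabla u=f-a\nabla\Pi$ (with $a=1/\rho$), together with Corollary~\ref{c:op} and Proposition~\ref{p:comp} to handle $\|a\nabla\Pi\|_{B^s_{p,r}}$, yields
\[
Z(t)\le e^{CW_0(t)}\Bigl(Z(0)+\!\int_0^t\!e^{-CW_0}\bigl(\|f\|_{B^s_{p,r}}+C\|\nabla\Pi\|_{B^s_{p,r}}+C\,Y\|\nabla\Pi\|_{B^{s-1}_{p,r}}\bigr)d\tau\Bigr),
\]
with $W_0(t):=\int_0^t\|\nabla u\|_{L^\infty}d\tau$. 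Finally, the third part of Proposition~\ref{p:elliptic} applied to $-\div(a\nabla\Pi)=\div(f-u\cdot\nabla u)$ (using $\div u=0$ to write $\div(u\cdot\nabla u)=\partial_ju^i\partial_iu^j$, then bounding this in $B^{s-1}_{p,r}$ by $C\|\nabla u\|_{L^\infty}\,Z$ via the algebra and tame estimates) gives
\[
\|\nabla\Pi(t)\|_{B^s_{p,r}}\le C\bigl(\|\div f\|_{B^{s-1}_{p,r}}+\|\nabla u\|_{L^\infty}\,Z+(K+Y)\,\|\nabla\Pi\|_{B^{s-1}_{p,r}}\bigr).
\]

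Substituting this pressure bound into the $Z$-inequality and summing with the $Y$-inequality, setting $X:=Y+Z$, every nonlinear contribution takes the form ``integrable amplifier (built from $\|\nabla u\|_{L^\infty}$, $\|\nabla\Pi\|_{B^{s-1}_{p,r}}$, and the bounded constant $K$) times $X(\tau)$'', plus a fixed data term. Gronwall's lemma then yields $\sup_{0\le t<T}X(t)<\infty$, and reinserting into the pressure estimate gives $\int_0^T\|\nabla\Pi\|_{B^s_{p,r}}d\tau<\infty$.

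The main obstacle is preventing any quadratic-in-$X$ term from surviving with a non-integrable amplifier. This is secured by (i)~the Lagrangian bound on $\|\nabla\rho\|_{L^\infty}$, which turns the would-be coupling $\|\nabla\rho\|_{L^\infty}\|u\|_{B^s_{p,r}}$ appearing in the $Y$-source into the harmless $K\,Z$; (ii)~the fact that the third part of Proposition~\ref{p:elliptic} controls $\|\nabla\Pi\|_{B^s_{p,r}}$ via $\|\nabla\Pi\|_{B^{s-1}_{p,r}}$ (rather than by its own $B^s_{p,r}$ norm), matching exactly the hypothesis~\eqref{eq:bu0}; and (iii)~the $f=v$ variant of Proposition~\ref{p:transport}, which puts only $\|\nabla u\|_{L^\infty}$ in the exponential of the velocity estimate.
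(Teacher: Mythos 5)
Your overall architecture matches the paper's proof: a Lagrangian bound on $\|\nabla\rho\|_{L^\infty}$, coupled transport estimates for $\nabla\rho$ and $u$, the third part of Proposition~\ref{p:elliptic} for $\nabla\Pi$, and a Gronwall closure in which every amplifier is integrable by \eqref{eq:bu0}. The velocity and pressure steps, in particular, are carried out exactly as the paper does.

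However, there is a genuine gap in the derivation of the inequality for $Y(t)=\|\nabla\rho(t)\|_{B^{s-1}_{p,r}}$. A direct application of Proposition~\ref{p:transport} at level $\sigma=s-1$ to $\partial_t\nabla\rho+u\cdot\nabla(\nabla\rho)=-\nabla u\cdot\nabla\rho$ produces the exponential factor $e^{\pm CV(t)}$ with
$$
V'(t)=\|\nabla u\|_{B^{N/p_1}_{p_1,\infty}\cap L^\infty}\quad\text{or}\quad V'(t)=\|\nabla u\|_{B^{s-2}_{p_1,r}},
$$
not merely $\|\nabla u\|_{L^\infty}$; that only collapses to $\|\nabla u\|_{L^\infty}$ if one can take $p_1=\infty$ with $\sigma=s-1<1$, i.e.\ $s<2$, which Condition~$(C)$ forbids as soon as $p\le N$. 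So in the regime $s\geq2$ the quantity $V(t)$ entering the exponential is \emph{not} controlled by \eqref{eq:bu0}, and your $Y$-inequality, as stated, does not follow from the cited propositions. The paper avoids this precisely by not invoking Proposition~\ref{p:transport} for $\nabla a$: it applies $\dq$ to $\partial_t\partial_ka+u\cdot\nabla\partial_ka=-\partial_ku\cdot\nabla a$ and uses the \emph{tame} commutator estimate of Lemma~\ref{l:combis} (together with the tame product estimate for the source) to obtain directly
$$
\|\nabla a(t)\|_{B^{s-1}_{p,r}}\leq \|\nabla a_0\|_{B^{s-1}_{p,r}}
+C\int_0^t\bigl(\|\nabla u\|_{L^\infty}\|\nabla a\|_{B^{s-1}_{p,r}}
+\|\nabla a\|_{L^\infty}\|\nabla u\|_{B^{s-1}_{p,r}}\bigr)\,d\tau,
$$
in which only $\|\nabla u\|_{L^\infty}$ multiplies the high norm of $\nabla a$. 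Your argument asserts this inequality but does not supply the commutator step that actually delivers it; you should either cite Lemma~\ref{l:combis} and reproduce the paper's dyadic commutator computation, or restrict to $s<2$, $p>N$, where the direct transport estimate genuinely suffices.
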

 \begin{proof}
 Note that $a:=1/\rho$ satisfies the same assumptions as $\rho.$
 Therefore we shall rather work with $a,$ for convenience.
 Recall that
 \begin{equation}\label{eq:nablaa}
 \d_t\d_ka+u\cdot\nabla\d_ka=-\d_ku\cdot\nabla a\quad\hbox{for }\ k=1,\cdots,N.
\end{equation}
So, applying Operator $\dq$ to the above equality and
using that $\div u=0,$ one may write (with the summation convention)
$$
\d_t\dq\d_ka+u\cdot\nabla\dq\d_ka=-\dq(\d_ku\cdot\nabla a)
+\d_j[u^j,\dq]\d_ka.
$$Therefore for all $t\in[0,T),$
\begin{equation}\label{eq:bu1}
\|\dq\d_ka(t)\|_{L^p}\leq \|\dq\d_ka_0\|_{L^p}
+\int_0^t\|\dq(\d_ku\cdot\nabla a)\|_{L^p}\,d\tau+\int_0^t\|\d_j[u^j,\dq]\d_ka\|_{L^p}\,d\tau.
 \end{equation}
According to Proposition \ref{p:op}, the term $\d_ku\cdot\nabla a$ belongs to
$B^{s-1}_{p,r}$ and satisfies
$$
\|\d_ku\cdot\nabla a\|_{B^{s-1}_{p,r}}\leq C\bigl(\|\d_ku\|_{L^\infty}\|\nabla a\|_{B^{s-1}_{p,r}}
+\|\nabla a\|_{L^\infty}\|\d_ku\|_{B^{s-1}_{p,r}}\bigr)
$$
while Lemma \ref{l:combis} ensures that for all $q\geq-1,$
$$
\|\d_j[u^j,\dq]\d_ka\|_{L^p}\leq Cc_q2^{q(s-1)}
\bigl((\|\d_ka\|_{L^\infty}\|\nabla u\|_{B^{s-1}_{p,r}}
+\|\nabla u\|_{L^\infty}\|\d_ka\|_{B^{s-1}_{p,r}}\bigr).
$$
Using the definition of the norm in $B^{s-1}_{p,r},$ we thus get after summation 
in \eqref{eq:bu1} that
\begin{equation}\label{eq:bu2}
\|\nabla a(t)\|_{B^{s-1}_{p,r}}\leq \|\nabla a_0\|_{B^{s-1}_{p,r}}
+C\int_0^t\bigl(\|\nabla u\|_{L^\infty}\|\nabla a\|_{B^{s-1}_{p,r}}
+\|\nabla a\|_{L^\infty}\|\nabla u\|_{B^{s-1}_{p,r}}\bigr)\,d\tau.
\end{equation}
In order to bound the velocity, let us apply the last part of Proposition \ref{p:transport}
to the velocity equation, and the following inequality (which stems from Corollary \ref{c:op}):
$$
\|a\nabla\Pi\|_{B^s_{p,r}}\leq C\bigl(a^*\|\nabla\Pi\|_{B^s_{p,r}}+\|\nabla\Pi\|_{L^\infty}
\|\nabla a\|_{B^{s-1}_{p,r}}\bigr).
$$
We get  for all $t\in[0,T),$
\begin{eqnarray}\label{eq:bu3}
 &&\|u(t)\|_{B^s_{p,r}}\leq e^{C\int_0^t\|\nabla u\|_{L^\infty}\,d\tau}\biggl(
 \|u_0\|_{B^s_{p,r}}\hspace{5cm}
 \nonumber\\&&\hspace{2cm}+\int_0^t e^{-C\int_0^\tau\|\nabla u\|_{L^\infty}\,d\tau'}
\Bigl(\|f\|_{B^s_{p,r}}+Ca^*\|\nabla\Pi\|_{B^s_{p,r}}
+C\|\nabla\Pi\|_{L^\infty}\|\nabla a\|_{B^{s-1}_{p,r}}\Bigr)\,d\tau\biggr).
\end{eqnarray}
In order to bound the pressure term, one may use the fact that 
$$
\div(a\nabla\Pi)=\div f-\div(u\cdot\nabla u)
$$
and apply the last  part of Proposition \ref{p:elliptic}. Performing a time integration
and using the fact that 
$$
\|\div(u\cdot\nabla u)\|_{B^{s-1}_{p,r}}
=\|\nabla u:\nabla u\|_{B^{s-1}_{p,r}}
\leq C\|\nabla u\|_{L^\infty}\|\nabla u\|_{B^{s-1}_{p,r}},
$$
 we get 
$$\displaylines{
a_*\|\nabla\Pi\|_{L_t^1(B^s_{p,r})}\leq 
C\biggl(\|\div f\|_{L_t^1(B^{s-1}_{p,r})}\hfill\cr\hfill
+\int_0^t\bigl(\|\nabla u\|_{L^\infty}\|\nabla u\|_{B^{s-1}_{p,r}}+
\|\nabla a\|_{L^\infty}\|\nabla \Pi\|_{B^{s-1}_{p,r}}
+\|\nabla\Pi\|_{L^\infty}\|\nabla a\|_{B^{s-1}_{p,r}}\bigr)\,d\tau.}
$$
Let us insert this latter inequality in \eqref{eq:bu3}. 
Then adding up Inequality \eqref{eq:bu2} and applying Gronwall lemma
we end up with 
\begin{eqnarray}\label{eq:bu4}&&\|\nabla a(t)\|_{B^{s-1}_{p,r}}
+\|u(t)\|_{B^{s}_{p,r}}
\leq C\exp\biggl(\int_0^t\|(\nabla a,\nabla u,\nabla\Pi)\|_{L^\infty}\,d\tau\biggr)
\hspace{3cm}\nonumber\\&&\hspace{3cm}
\biggl(\|\nabla a_0\|_{B^{s-1}_{p,r}}
+\|u_0\|_{B^{s}_{p,r}}
+\|f\|_{L_t^1(B^{s}_{p,r})}+\int_0^t\|\nabla a\|_{L^\infty}\|\nabla\Pi\|_{B^{s-1}_{p,r}}\,d\tau\biggr)
\end{eqnarray}
for some constant $C$ depending only on the regularity parameters and
 on $N,$ $a_*$ and $a^*.$
\smallbreak
Now, let us notice that $\nabla a$ is bounded on $[0,T)\times\R^N.$
Indeed, from Equation \eqref{eq:nablaa} and Gronwall lemma, we see that
$$
\|\nabla a(t)\|_{L^\infty}\leq e^{\int_0^t\|\nabla u\|_{L^\infty}}\|\nabla a_0\|_{L^\infty}.
$$
As $\nabla\Pi$ is in $L^1([0,T);B^{s-1}_{p,r})$ and $\nabla u$ is in $L^1([0,T);L^\infty)$
 by assumption 
and as $B^{s-1}_{p,r}\hookrightarrow L^\infty,$ we 
discover that both the last term in \eqref{eq:bu4} and  the exponential term  are bounded
on $[0,T).$ 
This completes the proof of the lemma.
\end{proof}
The following lemma implies the first part of Theorem \ref{th:BKM} in the case $p\geq2.$
\begin{lem}\label{l:continuation2}
  Let $(s,p,r)$ satisfy Condition $(C)$ with $2\leq p<\infty.$  Consider a solution 
  $(\rho,u,\nabla\Pi)$
 to $\eqref{eq:ddeuler}$ on $[0,T[\times\R^N$ such that\footnote{With the usual 
 convention if $r=\infty$} 
 \begin{itemize}
 \item $\rho_*\leq\rho\leq\rho^*,$ $\rho\in\cC([0,T)\times\R^N)$
 and $\nabla\rho\in \cC([0,T);B^{s-1}_{p,r}),$
 \item $u\in\cC([0,T);B^s_{p,r})\cap\cC^1([0,T];L^2),$
 \item $\nabla\Pi\in\cC([0,T);L^2)\cap L^1([0,T);B^s_{p,r}).$
 \end{itemize}
 If in addition Condition $\eqref{eq:blowup}$ is satisfied
 then $(\rho,u,\nabla\Pi)$ may be continued beyond $T$ into 
a solution of $\eqref{eq:ddeuler}$ with the above regularity. 
 \end{lem}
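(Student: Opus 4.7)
The plan is to convert the a priori estimates of Lemma~\ref{l:continuation1} into the existence of left limits at $T$ for $(\rho,u,\nabla\Pi)$, and then to re-apply Theorem~\ref{th:main} at time $T$ together with the uniqueness statement of Proposition~\ref{p:uniqueness} in order to glue the pieces together. Granting \eqref{eq:blowup}, Lemma~\ref{l:continuation1} immediately provides the uniform bound
$$
M:=\sup_{t\in[0,T)}\bigl(\|u(t)\|_{B^s_{p,r}}+\|\nabla\rho(t)\|_{B^{s-1}_{p,r}}\bigr)+\int_0^T\|\nabla\Pi\|_{B^s_{p,r}}\,d\tau<\infty,
$$
together with $\rho_*\leq\rho\leq\rho^*$ and the embedding $B^{s-1}_{p,r}\hookrightarrow L^\infty$; these are the only tools I shall need.

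First I would show that $u(t)$ admits a strong $L^2$ limit as $t\uparrow T$. Inspecting the momentum equation $\partial_t u=-u\cdot\nabla u-(1/\rho)\nabla\Pi+f$, each term on the right lies in $L^1([0,T);L^2)$: the convective term is controlled by $\|u\|_{L^\infty L^2}\|\nabla u\|_{L^1 L^\infty}$ (the former from the energy equality \eqref{eq:energy}, the latter from \eqref{eq:blowup}); the pressure term is bounded in $L^1 L^2$ by $\rho_*^{-1}M$; and $f\in\cC(L^2)$ by hypothesis. Hence $u$ is Cauchy in $L^2$, and its limit $u(T)\in L^2$ is divergence-free. Interpolating between strong $L^2$ convergence and the uniform bound $\|u(t)\|_{B^s_{p,r}}\leq M$ upgrades the convergence to every $B^{s'}_{p,r}$ with $s'<s$, while the Fatou property of Besov spaces places $u(T)$ in $B^s_{p,r}$. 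An analogous argument, based on Lemma~\ref{l:laxmilgram} applied to the $L^2$ elliptic equation for $\nabla\Pi$, produces a limit $\nabla\Pi(T)\in L^2$.

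For the density, the condition $\nabla u\in L^1([0,T);L^\infty)$ from \eqref{eq:blowup} ensures that the flow $\psi_t$ of $u$ is bi-Lipschitz uniformly on $[0,T)$ and converges uniformly to a bi-Lipschitz map $\psi_T$. Since $\rho(t,x)=\rho_0(\psi_t^{-1}(x))$, this yields uniform convergence $\rho(t)\to\rho(T):=\rho_0\circ\psi_T^{-1}$, with $\rho_*\leq\rho(T)\leq\rho^*$. Applying Proposition~\ref{p:transport} together with the tame estimate of Corollary~\ref{c:op} to the transport equation $\partial_t\nabla\rho+u\cdot\nabla\nabla\rho=-\nabla u\cdot\nabla\rho$, and exploiting the uniform $B^{s-1}_{p,r}$ bound, delivers $\nabla\rho(T)\in B^{s-1}_{p,r}$ and left-continuity at $T$ in the appropriate topology.

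The triple $(\rho(T),u(T))$ then satisfies the hypotheses of Theorem~\ref{th:main}: no supplementary $L^{p^*}$ assumption is needed since $p\geq 2$. That theorem therefore produces a solution on some $[T,T+\eta]$ with the regularity prescribed in the statement, and Proposition~\ref{p:uniqueness} guarantees that concatenating it with the original solution on $[0,T)$ yields a bona fide solution on $[0,T+\eta)$, which is the desired strict continuation. The main technical obstacle is ensuring that the one-sided limits at $T$ are taken in spaces strong enough to re-launch the existence machinery; this is precisely where the $L^2$ control furnished by the energy equality and the uniform high-regularity bounds of Lemma~\ref{l:continuation1} cooperate, through interpolation and the Fatou property, to provide the requisite strong (or weak, when $r=\infty$) convergence in $B^s_{p,r}$.
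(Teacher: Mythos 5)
Your proof is correct, but it takes a different (and somewhat more laborious) route than the paper. You establish the existence of left limits at $t=T$ for $\rho,$ $u$ and $\nabla\Pi$ in the appropriate topologies, using $L^1$--in--time control of $\partial_t u$ in $L^2,$ interpolation, the flow representation of the density, and the Fatou property, and then re-launch Theorem~\ref{th:main} from the limiting data at time $T.$ The paper's proof avoids all this limit analysis: Lemma~\ref{l:continuation1} gives uniform $B^s_{p,r}$ and $B^{s-1}_{p,r}$ bounds on $[0,T),$ so the explicit lower bound on the lifespan in \eqref{eq:time} yields a single $\eps>0,$ independent of $T'<T,$ such that the solution emanating from data at time $T'$ lives for at least time $\eps.$ Choosing $T'=T-\eps/2$ and invoking uniqueness on the overlap then extends the solution to $[0,T+\eps/2]$ without ever needing to construct $(\rho(T),u(T)).$ Your approach is more classical and self-contained (it does not lean on the precise form of the lifespan estimate), whereas the paper's is shorter because the quantitative lifespan bound was already prepared in the existence proof. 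One small remark: in your argument, the energy equality is not an assumption of Lemma~\ref{l:continuation2}, so you should note that the stated $\cC^1([0,T];L^2)$ regularity of $u$ and the equations allow it to be derived by the standard $L^2$ inner-product computation; also, extracting $\nabla\Pi(T)$ is unnecessary since Theorem~\ref{th:main} only asks for $(\rho_0,u_0).$
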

\begin{proof}
Lemma \ref{l:continuation1} ensures that $\|u\|_{L^\infty_T(B^s_{p,r})}$
and $\|Da\|_{L^\infty_T(B^{s-1}_{p,r})}$ are finite. 
So one may set 
$$\displaylines{
\eps:=c(\rho^*A_0)^{-1}\Bigl(U_0(T)+\rho^*A_0\|\div f\|_{L^1_T(B^{s-1}_{p,r})}
+(\rho^*A_0)^{\gamma+1}
  \bigl(\|u_0\|_{L^2}+\|f\|_{L^1_T(L^2)}\bigr)\Bigr)^{-1}}
$$
where $c$ is the small constant (depending only on $N$ and $(s,p,r)$) defined in \eqref{eq:time}.
\smallbreak
Then we know from the proof of Theorem \ref{th:main} in the case $p\geq2$ that
 for any $T'<T,$
System~\eqref{eq:ddeuler} with data $(\rho(T'),u(T'),f(T'+\cdot))$ has a unique
solution up to time $\eps.$
Taking $T'=T-\eps/2$ we thus get a continuation of $(\rho,u,\nabla\Pi)$ up to time
$T+\eps/2.$
 \end{proof}
 Let us now justify the last part of Theorem \ref{th:BKM}. 
 It stems from  the following logarithmic interpolation inequality (see e.g. \cite{KOT}):
 $$
 \|\nabla u\|_{L^\infty}\leq C\Bigl(1+\|\nabla u\|_{\dot B^0_{\infty,\infty}}
 \log\bigl(e+\|\nabla u\|_{B^{s-1}_{p,r}}\bigr)\Bigr)\quad\hbox{with }\ 
 \|\nabla u\|_{\dot B^0_{\infty,\infty}}:=\sup_{q\in\Z}\|\varphi(2^{-q}D)\nabla u\|_{L^\infty}
 $$
 which holds true whenever the embedding of $B^{s-1}_{p,r}$ is not critical 
 (that is $s>1+N/p$).
 
Then,  arguing exactly as in  Proposition 5.3 of \cite{D4},  we discover that 
Condition  \eqref{eq:blowup} may be replaced by the following \emph{weaker} condition:
 \begin{equation}\label{eq:blowup1}
 \int_0^T\bigl(\|\nabla u\|_{\dot B^0_{\infty,\infty}}+\|\nabla\Pi\|_{B^{s-1}_{p,r}}\bigr)\,dt<\infty.
\end{equation}
Now, it is classical (see e.g. \cite{BCD}, Chap. 7) that 
 there  exists some constant $C$ such that
$$
\|\nabla u\|_{\dot B^0_{\infty,\infty}}\leq C\|{\rm curl}\,u\|_{L^\infty}.
$$
This completes the proof of  Theorem \ref{th:BKM} in the case $p\geq2.$   


\subsection{The  case $1<p<2$}

Note that by virtue of Proposition \ref{p:embed}, the data  satisfy the assumptions 
of the theorem for  the triplet $(s-N/p+N/2,2,r).$ Hence, applying the theorem in the case $p=2$
supplies a local solution with the $B^{s-N/p+N/2}_{2,r}$ regularity. 
However, proving that the $B^s_{p,r}$ regularity is also preserved, is not utterly obvious. 
For proving that, we shall proceed as follows:
\begin{enumerate}
\item[{\it i)}] first, we  smooth  out the data so as to get a solution in $H^\infty:=\cap_\sigma H^\sigma$
for which the $B^s_{p,r}$ regularity is also preserved;
\item[{\it ii)}] second,  we establish  uniform bounds in $B^s_{p,r}$ on a fixed suitably small time interval;
\item[{\it iii)}] third, we  show the convergence of the sequence of smooth
solutions and that the limit has the required properties.
\end{enumerate}
\subsubsection*{Step 1: smooth solutions} Set
$$
a_0^n:=S_na_0,\quad
u_0^n:=S_nu_0\ \hbox{ and }\ 
 f^n:=S_n f
 $$
 where $S_n$ is the low frequency cut-off introduced in Section \ref{s:tools}.
 
Note that for all large enough $n\in\N$ and $t\in[0,T_0],$  we have 
\begin{eqnarray}\label{eq:Bound1}
&a_*/2\leq a_0^n\leq 2a^*,\quad
\|Da_0^n\|_{B^{s-1}_{p,r}}\leq C\|Da_0\|_{B^{s-1}_{p,r}},\\\label{eq:Bound2}
&\|u_0^n\|_{B^s_{p,r}}\leq C\|u_0\|_{B^s_{p,r}},\\\label{eq:Bound3}
&\|f^n\|_{L^1_t(B^s_{p,r})}\leq C\|f\|_{L^1_t(B^s_{p,r})}\quad\hbox{and}\quad
\|f^n(t)\|_{L^2}\leq \|f(t)\|_{L^2}.
\end{eqnarray}
It is also clear that (with obvious notation) $\nabla a_0^n$ and $u_0^n$ 
are in $B^\infty_{p,r}$ (hence also in $H^\infty$) 
and that $f^n\in\cC([0,T_0];H^\infty)\cap L^1([0,T_0];B^\infty_{p,r}).$

Finally, taking advantage of Lebesgue's dominated convergence theorem one may prove, if $r<\infty,$ that\footnote{Recall that $(\rho_0-\ov\rho)\in L^{p^*}$ by assumption} 
$$\begin{array}{l}
Da_0^n\longrightarrow Da_0\ \hbox{ in }\  B^{s-1}_{p,r}
\quad\hbox{and}\quad (a_0^n-\ov a)\rightarrow (a_0-\ov a)\ \hbox{ in }\ L^{p^*}\ 
\hbox{ with }\ \ov a:=1/\ov\rho,\\[1ex]
u_0^n\longrightarrow u_0\ \hbox{ in }\  B^{s-1}_{p,r},\\[1ex]
f^n\longrightarrow f \hbox{ in }\ 
L^1([0,T_0];B^s_{p,r})\cap \cC([0,T_0];L^2).
\end{array}
$$
As usual, the strong convergence has to be replaced by the weak convergence if $r=\infty.$
\smallbreak
Applying Theorem \ref{th:main}  in the case $p=2$ and using the fact that the lifespan
does not depend on the index of regularity (see Remark \ref{r:BKM}),
 we get a local maximal solution
$(a^n,u^n,\nabla\Pi^n)$ with 
 $Da^n,$ $u^n$ and $\nabla\Pi^n$ in $\cC([0,T_n^*);H^\infty),$ and
\begin{equation}\label{eq:ldv}
a_*/2\leq a^n\leq 2a^*.
\end{equation}
Note that as $a^n$ and $\rho^n$ are  just transported by the (smooth) flow of $u^n,$ we also have
\begin{equation}\label{eq:Bound4}
\|(\rho^n(t)-\ov\rho)\|_{L^{p^*}}=\|\rho_0^n-\ov\rho\|_{L^{p^*}}\leq
C\|\rho_0-\ov\rho\|_{L^{p^*}}\quad\hbox{for all }\ t\in[0,T_n^*)
\end{equation}
(and similarly for $a^n$) 
and $\nabla a^n$ and $\nabla\rho^n$ belong to $\cC([0,T_n^*);B^{s-1}_{p,r}).$
\smallbreak 
Let us now establish that $\nabla\Pi^n$ is in $L^1([0,T];B^s_{p,r})$
for all $T\in[0,T_n^*).$ Fix some $T\in[0,T_n^*).$
Applying Operator $\div$ to the momentum equation of \eqref{eq:ddeuler} and using
that $\div u^n=0$ yields
\begin{equation}\label{eq:Bound5}
\Delta\Pi^n=\div(\rho^n f^n)-\div(\rho^nu^n\cdot\nabla u^n)-\nabla\rho^n\cdot\d_tu^n.
\end{equation}
According to Proposition \ref{p:CZ}, $F\mapsto D^2\Pi$ is a self-map on  $B^s_{p,r}.$ 
Hence,  in order to show that $\nabla\Pi^n\in L^1([0,T];B^s_{p,r}),$ it
suffices to establish that $\nabla\Pi^n\in L^1([0,T];L^p)$ and that 
$\Delta\Pi^n\in L^1([0,T];B^{s-1}_{p,r}).$

Let us first show that  all the terms of the right-hand side of \eqref{eq:Bound5}
are in $L^1([0,T];B^{s-1}_{p,r}).$
Since, by assumption, $f^n\in L^1([0,T];B^s_{p,r})$
and as it as been established that $\rho^n\in L^\infty$ and $\nabla\rho^n\in B^{s-1}_{p,r},$
Corollary \ref{c:op} implies that 
$\div(\rho^n f^n)\in L^1([0,T];B^{s-1}_{p,r}).$
For the next term, we use that for all $i\in\{1,\cdots,N\},$  
$$
(\rho^nu^n\cdot\nabla u^n)^i=\sum_{j} T'_{\rho^n(u^n)^j}\d_ju^n+T_{\d_j(u^n)^i}\rho^n (u^n)^j.
$$
By embedding, $\rho^n u^n$ and $\nabla u^n$ are in $L^{p^*}$ (recall 
that $p^*>2>p$) and, arguing as for $\rho^nf^n,$ one can check 
that $\rho^nu^n$ is in $H^\infty.$ Of course, $\nabla u^n$ is also in $H^\infty.$
Given that $1/p=1/p^*+1/2,$ continuity results
 for the paraproduct and remainder  in the spirit of Proposition \ref{p:op} (see \cite{RS}) 
 ensure that $\rho^n u^n\cdot\nabla u^n$ is in $B^{s-1}_{p,r}.$

For the last term in \eqref{eq:Bound5}, one may write that
$$
\nabla\rho^n\cdot\d_tu^n=T'_{\d_tu^n}\cdot\nabla\rho^n+T_{\nabla\rho^n}\cdot\d_tu^n.
$$
As, by embedding, $\d_tu^n\in L^\infty,$ and as $\nabla\rho^n\in B^{s-1}_{p,r},$ 
continuity results for the paraproduct ensure that the first term in the right-hand side is in 
$B^{s-1}_{p,r}.$
Concerning the second term, one may use that $\nabla\rho^n\in L^{p^*}$ (by embedding)
and that $\d_tu^n\in H^\infty$ (from the equation). 
Hence $\Delta\Pi^n$ is indeed in $L^1([0,T];B^{s-1}_{p,r}),$ as claimed above.

In order to establish  that  $\nabla\Pi^n\in L^1([0,T];L^p),$ we use the fact that, owing to
 $\div\d_tu^n=0,$ one may write 
\begin{equation}\label{eq:Bound6}
\Delta\Pi^n=\div\Bigl(\rho^n f^n-\rho^nu^n\cdot\nabla u^n-(\rho^n-\ov\rho)\d_tu^n\Bigr).
\end{equation}
Hence, it suffices to check that $\rho^n f^n,$ $\rho^n u^n\cdot\nabla u^n$
and $(\rho^n-\ov\rho)\d_tu^n$ are in $L^1([0,T];L^p).$
For $\rho^n u^n$ this is obvious as, by embedding, $f^n\in L^1([0,T];L^p)$
and $u^n\in\cC_b([0,T]\times\R^N).$
By embedding, we also have $\nabla u^n\in\cC([0,T];L^\infty)$
and $u^n\in\cC([0,T];L^p),$ hence 
$\rho^n u^n\cdot\nabla u^n$ is in  $L^1([0,T];L^p).$

To deal with the last term in \eqref{eq:Bound6} the property that 
$(\rho^n-\ov\rho)\in \cC([0,T];L^{p^*})$  comes into play. Indeed,  from the velocity equation, as
the solution is in $H^\infty$, one easily gathers that
$\d_tu^n$ belongs to $\cC([0,T];L^2).$ Hence H\"older's inequality (note that $1/2+1/p^*=1/p$) ensures that $(\rho^n-\ov\rho)\d_tu^n\in\cC([0,T];L^p).$
\smallbreak
To finish this step, one has to prove that $u^n$ is in $\cC([0,T_n^*);B^s_{p,r}).$
In fact, from the product laws in Besov spaces and the  properties of regularity
that have been just established for the pressure and the density, 
we get
$$
\d_tu^n+u^n\cdot\nabla u^n=f^n-a^n\nabla\Pi^n\ \in L^1_{loc}([0,T_n^*);B^s_{p,r}).
$$
As $u_0^n\in B^s_{p,r},$ Proposition \ref{p:transport} ensures 
that~$u^n\in \cC([0,T_n^*);B^s_{p,r}).$


\subsubsection*{Step 2: Uniform estimates}

Let us remark that, by Sobolev embedding and owing to 
\eqref{eq:Bound1}, \eqref{eq:Bound2}, \eqref{eq:Bound3},
 one may find some index $\sigma>d/2+1$
such that $(Da_0^n)_{n\in\N},$
$(u_0^n)_{n\in\N}$ and $(f^n)_{n\in\N}$ are bounded in 
$H^{\sigma-1},$ $H^\sigma$ and $\cC([0,T_0];L^2)\cap L^1([0,T_0];H^\sigma),$
respectively. 
Taking advantage of Theorem \ref{th:main} in the case $p=2$
and of the lower bound provided by \eqref{eq:time} 
we thus deduce that there exists some time $T>0$ 
and some $M>0$ such that for all $n\in\N,$ we have $T_n^*>T$ and 
\begin{equation}\label{eq:Bound7}
\|\nabla a^n\|_{L^\infty_T(H^{\sigma-1})}
+\|u^n\|_{L^\infty_T(H^{\sigma})}
+\|\nabla\Pi^n\|_{L^1_T(H^\sigma)}\leq M.
\end{equation}
Of course the energy equality \eqref{eq:energy} is satisfied on $[0,T]$
by any solution $(a^n,u^n,\nabla\Pi^n).$ 
Recall that in addition, according to the previous step of the proof, 
\eqref{eq:Bound4} is satisfied and 
$$
\nabla a^n\in\cC([0,T];B^{s-1}_{p,r}),\quad
u^n\in\cC([0,T];B^s_{p,r})\ \hbox{ and }\ 
\nabla\Pi^n\in L^1([0,T];B^s_{p,r}).
$$
We claim that, up to a change of $T,$ the norm of the solution 
may be bounded \emph{independently of~$n$} in the space $E_T$ defined
in Subsection \ref{ss:main}. 
In all that follows, we denote by $C_M$ a ``constant'' depending
only on $(s,p,r,N,a_*,a^*)$ and on $M.$

{}From Proposition \ref{p:transport}, we have
\begin{equation}\label{eq:Bound9}
\|\nabla a^n(t)\|_{B^s_{p,r}}\leq \|\nabla a^n_0\|_{B^s_{p,r}}
e^{C\int_0^t\|\nabla u^n\|_{B^{s-1}_{p,r}}\,d\tau}
\end{equation}
and, arguing as for proving Inequality \eqref{eq:bound3a},
\begin{eqnarray}\label{eq:Bound10}
&&\|u^n(t)\|_{B^s_{p,r}}\leq
e^{C\int_0^t\|\nabla u^n\|_{B^{s-1}_{p,r}}\,d\tau}\biggl(\|u^n_0\|_{B^s_{p,r}}\nonumber
\hspace{1cm}\\&&\hspace{1cm}+\int_0^te^{-C\int_0^\tau\|\nabla u^n\|_{B^{s-1}_{p,r}}\,d\tau'}
\Bigl(\|f^n\|_{B^s_{p,r}}+\bigl(a^*+\|\nabla a^n\|_{B^{s-1}_{p,r}}\bigr)\|\nabla\Pi^n\|_{B^s_{p,r}}
\Bigr)d\tau\biggr).
\end{eqnarray}
In order to bound $\nabla\Pi^n,$ we apply the first part of Proposition \ref{p:elliptic} to 
the following equation:
$$
\div(a^n\nabla\Pi^n)=\div\bigl(f^n -u^n\cdot\nabla u^n\bigr).
$$
Using the fact that $B^{s-1}_{p,r}$ is an algebra and the  relation 
$\div(u^n\cdot\nabla u^n)=\nabla u^n:\nabla u^n,$  we end up with
\begin{equation}\label{eq:Bound11}
a_*\|\nabla\Pi^n\|_{L^1_t(B^s_{p,r})}\leq C\biggl(\|f^n\|_{L^1_t(B^s_{p,r})}
+\int_0^t\|u^n\|_{B^{s}_{p,r}}^2\,d\tau+a_*\biggl(1+\frac{\|Da^n\|_{L_t^\infty(B^{s-1}_{p,r})}}{a_*}\biggr)^s
\|\nabla\Pi^n\|_{L_t^1(L^p)}\biggr)\cdotp
\end{equation}
In order to ``close the estimate'', we now have to bound $\nabla\Pi^n$ in $L^p.$
For that, we apply the standard $L^p$ elliptic estimates stated in Proposition \ref{p:Lp} to \eqref{eq:Bound6}, 
and H\"older inequality so as to get
$$
\|\nabla\Pi^n\|_{L_t^1(L^p)}\leq C\biggl(\rho^*\biggl(\|f^n\|_{L^1_t(L^p)}
+\int_0^t\|u^n\|_{L^{p^*}}\|\nabla u^n\|_{L^2}\,d\tau\biggr)+\|\rho^n-\ov\rho\|_{L_t^\infty(L^{p^*})}\|\d_tu^n\|_{L_t^1(L^2)}\biggr).
$$
Note that, by Sobolev embedding, we have $$\|u^n\|_{L^{p^*}}\leq C\|u^n\|_{H^\sigma}.$$ 
So finally, there exists some  constant $C_M$ such that
$$
\|\nabla\Pi^n\|_{L^1_T(L^p)}\leq C_M.
$$
Plugging this latter inequality in \eqref{eq:Bound11}, we thus get
$$
a_*\|\nabla\Pi^n\|_{L_t^1(B^s_{p,r})}\leq C\biggl(\|f^n\|_{L_t^1(B^s_{p,r})}
+\int_0^t\|\nabla u^n\|_{B^{s-1}_{p,r}}^2\,d\tau
+a_*C_M\biggl(1+\frac{\|Da^n\|_{L_t^\infty(B^{s-1}_{p,r})}}{a_*}\biggr)^s
\biggr)\cdotp
$$
It is now easy to conclude this step: denoting
$$
U^n(t):=\|u^n(t)\|_{B^s_{p,r}}\quad\hbox{and}\quad
A^n(t):=a^*+\|Da^n(t)\|_{B^{s-1}_{p,r}},
$$
and assuming that $\ov T\leq T$ has been chosen so that 
\begin{equation}\label{eq:Bound12}
C\int_0^{\ov T}\|\nabla u^n\|_{B^{s-1}_{p,r}}\,d\tau\leq\log2,
\end{equation}
the above inequalities and \eqref{eq:Bound1}, \eqref{eq:Bound2}, 
\eqref{eq:Bound3} imply that for all $t\in[0,\ov T]$ we have
$$
A^n(t)\leq 2A_0\quad\hbox{with }\ A_0:=a^*+\|Da_0\|_{B^{s-1}_{p,r}}
$$
and
$$
U^n(t)\leq 2(U_0(t)+C\rho^*A_0\int_0^t \biggl(\|f\|_{B^s_{p,r}}+(U^n)^2
+C_Ma_*A_0^s\biggr)\,d\tau.
$$
So finally, there exists a nondecreasing function $F$ depending only on 
the norm of the data and such that for all $t\in[0,\ov T],$ we have
$$
U^n(t)\leq 2F(t)+C\rho^*A_0\int_0^t(U^n(\tau))^2\,d\tau.
$$
Therefore, if in addition
\begin{equation}\label{eq:time5}
2CA_0\int_0^{\ov T} U^n(\tau)\,d\tau\leq a_*
\end{equation}
 then we have $U^n\leq 4 F$ on $[0,\ov T].$

By arguing exactly as in the case $p\geq2,$ it is easy to see that
Condition \eqref{eq:time5}  is  satisfied if $\ov T$ is small enough
(an explicit lower bound may be obtained in terms of the data). 
So finally, we have found a positive time $T$ so that
$(a^n,u^n,\nabla\Pi^n)_{n\in\N}$ is bounded in the space $E_T.$


\subsubsection*{Step 3: Convergence of the sequence}

Let $\du^n:=u^{n+1}-u^n,$
$\dr^n:=\rho^{n+1}-\rho^n$ and 
$\dPi^n=\Pi^{n+1}-\Pi^n.$
Applying Inequality \eqref{eq:uniq} to the solutions
$(\rho^n,u^n,\nabla\Pi^n)$ and $(\rho^{n+1},u^{n+1},\nabla\Pi^{n+1})$  and
 using the uniform bounds that have been established in the
 previous step,  and \eqref{eq:ldv} ensures that there exists some $M>0$ such that for all $t\in[0,T]$ and
 $n\in\N,$ we have
\begin{equation}\label{eq:Bound8}
\|\dr^n(t)\|_{L^2}+\|\du^n(t)\|_{L^2}
\leq M\biggl(\|\dr^n(0)\|_{L^2}+\|\du^n(0)\|_{L^2}
+\int_0^t\|\df^n\|_{L^2}\,d\tau\biggr).
\end{equation}
Now, from the definition of $a_0^n$
and the mean value theorem, we get for large enough $n,$
$$
\|\dr^n(0)\|_{L^2}\leq C2^{-n}\|Da_0\|_{L^2}.
$$
Similarly, we have
$$
\|\du^n(0)\|_{L^2}+\|\df^n\|_{L^1_T(L^2)}\leq C2^{-n}
\bigl(\|\du_0\|_{L^2}+\|\df\|_{L^1_T(L^2)}\bigr).
$$
So Inequality \eqref{eq:Bound8} entails that $(\rho^n-\rho^0)_{n\in\N}$
is a Cauchy sequence in $\cC([0,T];L^2)$ and 
that $(u^n)_{n\in\N}$ is a Cauchy sequence in $\cC([0,T];L^2).$
Then, using for instance \eqref{eq:Bound6}, we see that
$(\nabla\Pi^n)_{n\in\N}$ is also a Cauchy sequence in $\cC([0,T];L^2).$

Finally, from the bounds in large norm that have been stated in the previous 
step, and the Fatou property for the Besov space,  one may conclude that
the limit $(a,u,\nabla\Pi)$ to $(a^n,u^n,\nabla\Pi^n)_{n\in\N}$
converges to some solution \eqref{eq:ddeuler} and
has the desired properties of regularity. 
As similar arguments have been used for handling the case $p\geq2,$
the details are left to the reader. 
 \smallbreak
 Let us now establish  Theorem \ref{th:BKM} in the case $p<2.$ 
 Let $(\rho,u,\nabla\Pi)$ be a solution with the properties described in
 Theorem \ref{th:main}. 
 Note that Lemma \ref{l:continuation1} is also true if $p<2.$
 So the only change lies in the proof of Lemma \ref{l:continuation2}
 which now uses the (new) lower bound for the lifespan that may be obtained
 from the computations of step 2, instead of \eqref{eq:time}. 
 This gives the first part of Theorem~\ref{th:BKM}. 
 As in the case $p\geq2,$ the last part of the proof of the theorem is a mere consequence 
 of the logarithmic interpolation inequality stated in \cite{KOT}.


\subsection{Removing the assumptions on the low frequency of the data}

As  pointed out in Section \ref{s:results}, 
in dimension $N\geq3,$ the supplementary assumption
that $(\rho_0-\ov\rho)\in L^{p^*}$ is not needed if $p>N/(N-1).$

In order to see that, one may repeat the proof of the theorem in the case $1<p\leq2.$
As before, bounding $\nabla\Pi^n$ in $L^1_T(L^p)$  is the main difficulty. 
For that, one may decompose $\nabla\Pi^n$ into two terms
$\nabla\Pi^n_1$ and $\nabla\Pi^n_2$ such that
$$
\Delta\Pi^n_1=\div(\rho^nf^n-\rho^nu^n\cdot\nabla u^n)
\quad\hbox{and}\quad
\Delta\Pi^n_2=\nabla\rho^n\cdot\d_tu^n.
$$
On the one hand, as before, one may write that
$$
\|\nabla\Pi^n_1\|_{L^p}\leq C\rho^*\bigl(\|f^n\|_{L^p}+\|u^n\|_{L^{p^*}}
\|\nabla u^n\|_{L^2}\bigr).
$$
On the other hand, we have 
$$
\nabla\Pi^n_2=(-\Delta)^{-1}\nabla(\nabla\rho^n\cdot\d_tu^n).
$$
Recall that  in dimension $N\geq2,$ the kernel of Operator $(-\Delta)^{-1}\nabla$ 
 behaves as $|x|^{1-N}.$
Hence, according to the  Hardy-Littlewood-Sobolev inequality, if $1/p+1/N<1$ then
we have 
$$
\|\nabla\Pi^n_2\|_{L^p}\leq C\|\nabla\rho^n\cdot\d_tu^n\|_{L^q}
\quad\hbox{with }\ 
\frac1q=\frac1p+\frac1N\cdotp
$$
As $(\nabla\rho^n)_{n\in\N}$ may be bounded in $\cC([0,T];L^p)$
and, by embedding, $(\d_tu^n)_{n\in\N}$ may be bounded in
$L^1([0,T];L^N),$ \emph{in terms of Sobolev norms only}, 
it is thus possible to get 
a bound of $\nabla\Pi^n_2$ in $L^1([0,T];L^p)$ in terms of 
the initial data. 
The rest of the proof goes by the steps that we used before.


\section{The proof of  Theorem \ref{th:2}}\label{s:th:2}

For $T>0,$ let us introduce the set $F_T$ of functions $(a,u,\nabla\Pi)$ such that 
$$
\begin{array}{lll}a\in\cC_b([0,T]\times\R^N),&&
Da\in\cC([0,T];B^{s-1}_{p,r}),\\[1ex]
 u\in\cC([0,T];B^{s}_{p,r}),&&
\nabla\Pi\in\cC([0,T];L^2)\cap L^1([0,T];B^{s}_{p,r}).\end{array}
$$
Uniqueness in Theorem \ref{th:2} stems from Proposition \ref{p:uniqueness}.
Indeed, we see that, as $p\leq4,$  any solution $(\rho,u,\nabla\Pi)$ in $F_T$
 satisfies $u\in\cC([0,T];W^{1,4})$ (according to Proposition \ref{p:embed} and to the remark that follows) 
  and $\nabla\Pi\in\cC([0,T];L^2).$ 
Therefore, using the velocity equation and H\"older's inequality, we get 
\begin{equation}\label{eq:1}
(\d_tu-f)=-\bigl(u\cdot\nabla u+a\nabla\Pi\bigr)\in\cC([0,T];L^2).\end{equation}
Note that, as  $u$ and $\nabla\rho$ are in $\cC([0,T];L^4),$ we have 
\begin{equation}\label{eq:2}
\d_t\rho=-u\cdot\nabla\rho\in\cC([0,T];L^2).
\end{equation} 
Now, consider two solutions $(\rho_1,u_1,\nabla\Pi_1)$
and $(\rho_2,u_2,\nabla\Pi_2)$ in $F_T,$ corresponding to the same data. 
Then \eqref{eq:1}  implies that  $\du:=u_2-u_1$ belongs to $\cC^1([0,T];L^2)$
while \eqref{eq:2} guarantees that $\dr:=\rho_2-\rho_1$ 
is in $\cC^1([0,T];L^2).$
So Proposition \ref{p:uniqueness} applies and yields uniqueness.
\medbreak
Let us now tackle the proof of the existence part of the theorem.
We claim that if we restrict our attention to solutions which are $F_T$ then 
the assumptions of  Lemma \ref{l:equiv} are fulfilled so that  it suffices to solve System \eqref{eq:ddeuler1}.
Indeed, it is only a matter of checking whether $\cQ u$ is in $\cC([0,T];L^2).$ 
Applying $\cQ$ to the velocity equation of \eqref{eq:ddeuler1}, we get 
$$
\d_t\cQ u=\cQ f-\cQ(a\nabla\Pi)-\cQ(u\cdot\nabla u).
$$
{}From the assumptions on $f,$ the definition of $F_T$ and the fact that $\cQ$ 
maps $L^2$ in $L^2,$ we see that the first two terms in the right-hand side are in $\cC([0,T];L^2).$
Concerning the last term, we just use the fact that, as pointed out above, 
 $u\cdot\nabla u$ belongs to $\cC([0,T];L^2)$ so
and $\cQ(u\cdot\nabla u),$ too.
\medbreak
Let us now go to the proof of the existence of a local-in-time solution for
\eqref{eq:ddeuler1} under the assumptions of Theorem \ref{th:2}. Compared to Theorem \ref{th:main}, 
the main change is that we do not expect to have $u\in\cC([0,T];L^2)$ any longer (i.e. 
the energy may be infinite). 
However, as the pressure satisfies 
$$
-\div(a\nabla\Pi)=\div(u\cdot\nabla\cP u)-\div f,
$$
Lemma \ref{l:laxmilgram} will ensure that $\nabla\Pi\in\cC([0,T];L^2)$ anyway
\emph{if $u\cdot\nabla\cP u$ belongs to $\cC([0,T];L^2)$}.
In view of Proposition \ref{p:embed},  Remark \ref{r:CZ} and H\"older's inequality, 
this latter property is guaranteed  by the fact that  $u\in\cC([0,T];B^{s}_{p,r})$
\emph{for some $p\leq4.$}
\smallbreak
Once this has been noticed, one may use  the same approximation 
scheme as in Theorem \ref{th:main}:
 we first set  $(a^0,u^0,\nabla\Pi^0):\equiv(a_0,u_0,0).$
Next, we assume  that $(a^n,u^n,\nabla\Pi^n)$ has been constructed over $\R^+,$
belongs to the space $F_T$ for all $T>0$ and that there exists a positive time
$T^*$ such that \eqref{eq:loinduvide} is satisfied for all $t\in[0,T^*]$ and, 
for suitable constants $C_0$ and $C$ (one can take $C_0=2C^2$),  
\begin{eqnarray}
\label{eq:bound4b}
&\|\nabla a^{n+1}(t)\|_{B^{s-1}_{p,r}}\leq  2\|\nabla a_0\|_{B^{s-1}_{p,r}}\quad\hbox{for all }\ 
t\in[0,T^*],\\\label{eq:hyp1b}
&U^n(t)\leq 4U_0(t)+C_0\rho^*A_0\Bigl(\|\div f\|_{L^1_t(B^{s-1}_{p,r})}+(\rho^*A_0)^\gamma
\|\cQ f\|_{L^1_t(L^2)}\Bigr),\\\label{eq:hyp2b}
&a_*\|\nabla\Pi^n\|_{L^1_t(B^s_{p,r})}\leq C
\biggl(\|\div f\|_{L^1_t(B^{s-1}_{p,r})}+\bigl(\rho^*A_0\bigr)^\gamma
\Int_0^t\bigl(\bigl(U^n\bigr)^2+\|\cQ f\|_{L^2}\bigr)\,d\tau\biggr)\end{eqnarray}
with   $A_0:=a^*+\|Da_0\|_{B^{s-1}_{p,r}},$
$U_0(t):=\|u_0\|_{B^s_{p,r}}+\|f\|_{L^1_t(B^s_{p,r})}$ and  
$U^n(t):=\|u^n(t)\|_{B^{s}_{p,r}}.$
\medbreak
Arguing exactly as in the proof of Theorem \ref{th:main}, we see that 
if we define $a^{n+1}$ as the solution to  $$
 \d_ta^{n+1}+u^n\cdot\nabla a^{n+1},\qquad a^{n+1}_{|t=0}=a_0
 $$
 then \eqref{eq:loinduvide} is satisfied for all time, 
  $\nabla a^{n+1}\in\cC(\R^+;B^{s-1}_{p,r})$
and 
\begin{equation}\label{eq:bound1b}
\|\nabla a^{n+1}(t)\|_{B^{s-1}_{p,r}}\leq  e^{C\int_0^tU^n(\tau)\,d\tau}\|\nabla a_0\|_{B^{s-1}_{p,r}}.
\end{equation}
So if we assume that $T^*$ has been chosen so that
\begin{equation}\label{eq:time1b}
C\int_0^{T^*}U^n(t)\,dt\leq\log2
\end{equation}
then $a^{n+1}$ satisfies \eqref{eq:bound4b}.\smallbreak
Next,  we  take $u^{n+1}$ to be  the unique solution  in $\cC(\R^+;B^s_{p,r})$ of the
 transport equation \eqref{eq:un}. 
 As before,  the right-hand side of \eqref{eq:un}   belongs to $\cC(\R^+;B^s_{p,r})$
 and one may use \eqref{eq:bound3a}.
 So finally, the existence of $u^{n+1}\in\cC(\R^+;B^s_{p,r})$ is ensured by 
Proposition \ref{p:transport}, and we have
$$\displaylines{
\|u^{n+1}(t)\|_{B^{s}_{p,r}}\leq  e^{C\int_0^tU^n(\tau)}\biggl(\|u_0\|_{B^{s}_{p,r}}
\hfill\cr\hfill+\int_0^te^{-C\int_0^\tau U^n(\tau')\,d\tau'}\bigl((\|a^{n+1}\|_{L^\infty}
+\|\nabla a^{n+1}\|_{B^{s-1}_{p,r}})\|\nabla\Pi^n\|_{B^s_{p,r}}
+\|f\|_{B^s_{p,r}}\bigr)\,d\tau\biggr).}
$$
Therefore, if we restrict our attention to those $t$ that are in $[0,T^*]$
with $T^*$ satisfying \eqref{eq:time1b}, and use Inequality \eqref{eq:bound1b}, 
we see that for all $t\in[0,T^*],$
$$
U^{n+1}(t)\leq  2U_0(t)
+CA_0
\int_0^t\|\nabla\Pi^n\|_{B^s_{p,r}}\,d\tau\quad\hbox{with }\ 
A_0:=a^*+\|\nabla a_0\|_{B^{s-1}_{p,r}}.
$$
So if we assume that $C_0=2C^2$ and that $T^*$ has been chosen so that
\begin{equation}\label{eq:time2b}
C^2\rho^*A_0\int_0^{T^*}U^n(t)\,dt\leq\frac12
\end{equation}
then taking advantage of Inequality  \eqref{eq:hyp2b}, 
we see that $u^{n+1}$ satisfies \eqref{eq:hyp1b} on $[0,T^*].$
 \smallbreak
 To finish with,  in order to  construct  the approximate pressure $\Pi^{n+1},$
 we solve the elliptic equation \eqref{eq:pres} for every positive time. 
Recall that we have $\div(u^{n+1}\cdot\nabla\cP u^{n+1})\in B^{s-1}_{p,r}$ and that
$$
\|\div(u^{n+1}\cdot\nabla\cP u^{n+1})\|_{B^{s-1}_{p,r}}\leq C (U^{n+1})^2.
$$
Next, given our assumptions on $(s,p,r)$ we have $B^s_{p,r}\hookrightarrow W^{1,4}.$
Therefore, since $\cP$ maps $L^4$ in $L^4,$ one may write
$$\begin{array}{lll}
\|u^{n+1}\cdot\nabla\cP u^{n+1}\|_{L^2}&\leq& \|u^{n+1}\|_{L^4}\|\cP u^{n+1}\|_{L^4},\\[1ex]
&\leq& C\|u^{n+1}\|_{L^4}\|u^{n+1}\|_{W^{1,4}},\\[1ex]
&\leq& C\bigl( U^{n+1}\bigr)^2.\end{array}
$$
Therefore, the second part of Proposition \ref{p:elliptic} ensures that
$\nabla\Pi^{n+1}$ is well defined in $\cC(\R^+;L^2)\cap L^1_{loc}(\R^+;B^s_{p,r})$ and that 
  $$
  a_*\|\nabla\Pi^{n\!+\!1}\|_{L_t^1(B^s_{p,r})}\leq
 C\biggl(\|\div f\|_{L^1_t(B^{s-1}_{p,r})}
 +\bigl(1+\rho^*\|Da^{n\!+\!1}\|_{L^\infty_t(B^{s-1}_{p,r})}\bigr)^\gamma
\int_0^t\bigl(U^{n+1}\bigr)^2+\|\cQ f\|_{L^2}\bigr)\,d\tau\biggr).
  $$
  Taking advantage of Inequality  \eqref{eq:bound4b} 
   at rank $n+1,$ one can now conclude that~$\nabla\Pi^{n+1}$ satisfies~\eqref{eq:hyp2b}. 
  
  At this stage we have proved that if Inequalities  \eqref{eq:bound4b}, \eqref{eq:hyp1b} and \eqref{eq:hyp2b} 
   hold for $(a^n,u^n,\nabla\Pi^n)$ then they also 
  hold for $(a^{n+1},u^{n+1},\nabla\Pi^{n+1})$
 provided $T^*$ satisfies Inequality   
  \eqref{eq:time2b}. 
  One may easily check that this is indeed the case if we set
  \begin{equation}\label{eq:timeb}
  T^*:=\sup\Bigl\{t>0\,/\, \rho^*A_0t\Bigl(U_0(t)+\rho^*A_0\|\div f\|_{L^1_t(B^{s-1}_{p,r})}
  +(\rho^*A_0)^{\gamma+1}
\|\cQ f\|_{L^1_t(L^2)}\Bigr)\leq c\Bigr\}
  \end{equation}  
  for a  small  enough constant $c$ depending only on $s,$ $p$ and $N.$
 \smallbreak
Once the bounds in $F_{T^*}$ have been established,  the last steps of the proof  are almost  
identical to those  of Theorem \ref{th:main}.
Indeed, introducing
$$
\tilde a^n(t,x):=a^n(t,x)-a_0(x)\quad\hbox{and}\quad
\tilde u^n(t,x):=u^n(t,x)-u_0(x)-\int_0^t f(\tau,x)\,d\tau
$$
and observing that $\du^n:=u^{n+1}-u^n=\tilde u^{n+1}-\tilde u^n,$
one can use exactly the same computations as before
for bounding $\da^n,$ $\du^n$ and $\nabla\dPi^n.$
As a consequence $(\tilde a^n,\tilde u^n,\nabla\Pi^n)$
is a Cauchy sequence in $\cC([0,T^*];L^2).$
Next, the bounds \eqref{eq:bound4b}, \eqref{eq:hyp1b} and \eqref{eq:hyp2b} enable us to 
show that the limit is indeed in $F_T$ and satisfies \eqref{eq:ddeuler1}. 
The details are left to the reader.
\medbreak
Let us finally establish the continuation criterion. 
Note that Lemma \ref{l:continuation1} still applies in the context of infinite energy solutions.
Hence, repeating the proof of Lemma \ref{l:continuation2} 
and using the logarithmic interpolation inequality of \cite{KOT} yields the result.
This completes the proof of Theorem~\ref{th:2}.


\section{The proof of Theorem \ref{th:3}}\label{s:th:3}

In this section, we aim at investigating the 
 well-posedness  issue of System \eqref{eq:ddeuler} in H\"older spaces $C^s$ 
 (which coincide with the Besov spaces $B^s_{\infty,\infty}$ if $s$ is not an integer), 
 and, more generally, 
 in Besov spaces of type $B^s_{\infty,r}.$  Of particular interest is the case
 of the Besov space $B^1_{\infty,1}$ which is the largest one for which Condition $(C)$ holds. 
 
 The main difficulty is that  the previous proofs where  based on the elliptic estimate
 stated in Proposition \ref{p:elliptic} which fails in the limit case $p=\infty.$ 
 In this section, we shall see that  the case of a small perturbation 
 of a constant density state may be handled by a different approach.

\subsection{The proof of uniqueness}

Note that in the case $p>4$ the solution provided by Theorem \ref{th:3} need not
satisfy $\nabla\Pi\in L^1([0,T];L^2).$ Hence $\d_tu$ need not be in $L^1([0,T];L^2)$
and the assumptions of Proposition \ref{p:uniqueness} are not satisfied. 

So, in order to prove uniqueness, we shall prove stability estimates 
in $L^p$ rather than in $L^2.$ These estimates will be also  needed 
 in the last step of the proof 
of the existence part of Theorem~\ref{th:3}. 

Consider two solutions $(\rho_1,u_1,\nabla\Pi_1)$ and $(\rho_2,u_2,\nabla\Pi_2)$ of
\eqref{eq:ddeuler}. Let $a_1:=1/\rho_1$ and $a_2:=1/\rho_2.$ As usual, denote
$\da:=a_2-a_1,$  $\du:=u_2-u_1,$ $\nabla\dPi:=\nabla\Pi_2-\nabla\Pi_1$ and $\df:=f_2-f_1.$
First, as $\div u_2=0$ and 
$$\d_t\da+u_2\cdot\nabla\da=-\du\cdot\nabla a_1,
$$ one may write
\begin{equation}\label{eq:u1}
\|\da(t)\|_{L^p}\leq\|\da(0)\|_{L^p}+\int_0^t\|\nabla a_1\|_{L^\infty}\|\du\|_{L^p}\,d\tau.
\end{equation}
Next, as 
$$
\d_t\du+u_2\cdot\nabla\du=\df-\du\cdot\nabla u_1-\da\nabla\Pi_1-a_2\nabla\dPi,
$$
we have
\begin{equation}\label{eq:u2}
\|\du(t)\|_{L^p}\leq\|\du(0)\|_{L^p}
+\int_0^t\Bigl(\|\df\|_{L^p}+\|\nabla u_1\|_{L^\infty}\|\du\|_{L^p}
+\|\nabla\Pi_1\|_{L^\infty}\|\da\|_{L^p}
+a^*\|\nabla\dPi\|_{L^p}\Bigr)\,d\tau.
\end{equation}
Finally, we notice that  $\nabla\dPi$ satisfies the elliptic equation
\begin{equation}\label{eq:u3}
\div(a_2\nabla\dPi)=\div\df-\div(\da\nabla\Pi_1)-\div(\du\cdot\nabla u_1)-\div(u_2\cdot\nabla\du).
\end{equation}
The key point here is  that, owing to $\div u_2=\div\du=0,$ we have
$$
\div(u_2\cdot\nabla\du)=\div(\du\cdot\nabla u_2).
$$
Hence Equality \eqref{eq:u3} rewrites
$$
a_*\Delta\dPi =\div\df+\div((a_*-a_2)\nabla\dPi)-\div(\da\nabla\Pi_1)-\div(\du\cdot\nabla(u_1+u_2))
$$
so that the $L^p$ elliptic estimate stated in Proposition \ref{p:Lp} implies that
$$
a_*\|\nabla\dPi\|_{L^p}\leq C\Bigl(\|\df\|_{L^p}+(a^*-a_*)\|\nabla\dPi\|_{L^p}
+\|\nabla\Pi_1\|_{L^\infty}\|\da\|_{L^p}+\|\nabla(u_1+u_2)\|_{L^\infty}\|\du\|_{L^p}\Bigr).
$$
If  the  quantity $a^*/a_*\,-1$ is small enough then  we thus have, up to a change of $C,$
$$
a_*\|\nabla\dPi\|_{L^p}\leq C\Bigl(\|\df\|_{L^p}
+\|\nabla\Pi_1\|_{L^\infty}\|\da\|_{L^p}+\|\nabla(u_1+u_2)\|_{L^\infty}\|\du\|_{L^p}\Bigr).
$$
Plugging this latter inequality in \eqref{eq:u2}
then adding up Inequality \eqref{eq:u1}, we get 
$$\displaylines{
\|(\da,\du)(t)\|_{L^p}\leq \|(\da,\du)(0)\|_{L^p}\hfill\cr\hfill
+C\int_0^t\Bigl(\|\df\|_{L^p}+\bigl(\|\nabla u_1\|_{L^\infty}+\|\nabla u_2\|_{L^\infty}+\|\nabla a_1\|_{L^\infty}+\|\nabla \Pi_1\|_{L^\infty}\bigr)\|(\da,\du)\|_{L^p}\Bigr)\,d\tau.}
$$
Applying Gronwall's lemma yields the following result which obviously implies
the uniqueness part of Theorem \ref{th:3}:
\begin{prop}\label{eq:uniq3}
Let $(\rho_1,u_1,\nabla\Pi_1)$ and $(\rho_2,u_2,\nabla\Pi_2)$ be two solutions of 
$\eqref{eq:ddeuler}$ on $[0,T]\times\R^N$ such that for some $p\in(1,\infty),$  \begin{itemize}
\item $\da:=a_2-a_1$ and $\du:=u_2-u_1$ are in $\cC([0,T];L^p),$
\item $\nabla\dPi:=\nabla\Pi_2-\nabla\Pi_1$ is in $L^1([0,T];L^p),$
\end{itemize}
and for some positive real numbers $a_*$ and $a^*$ such that $a_*\leq a^*,$
$$
a_*\leq a_1,a_2\leq a^*.
$$
There exists a constant $c$ depending only on $p$ and on $N$ such that if
$$
a^*-a_*\leq ca_*
$$
and if for all $t\in[0,T],$
$$V(t):=\int_0^t\bigl(\|\nabla u_1\|_{L^\infty}+\|\nabla u_2\|_{L^\infty}+\|\nabla a_1\|_{L^\infty}+\|\nabla \Pi_1\|_{L^\infty}\bigr)\,d\tau<\infty
$$
then  the following inequality is satisfied:
$$
\|(\da,\du)(t)\|_{L^p}\leq e^{CV(t)}\biggl(\|(\da,\du)(0)\|_{L^p}
+C\int_0^t e^{-CV(\tau)}\|\df(\tau)\|_{L^p}\,d\tau\biggr).
$$
\end{prop}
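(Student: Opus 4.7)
The plan is to derive transport-type $L^p$ estimates for the density and velocity differences and combine them with an $L^p$ elliptic estimate for the pressure difference, exploiting the small-perturbation assumption to absorb the bad term. Set $\da:=a_2-a_1$, $\du:=u_2-u_1$, $\dPi:=\Pi_2-\Pi_1$, $\df:=f_2-f_1$. From the first equation of \eqref{eq:ddeuler1} for each solution one gets
$$\d_t\da+u_2\cdot\nabla\da=-\du\cdot\nabla a_1,$$
 and from the velocity equation
$$\d_t\du+u_2\cdot\nabla\du=\df-\du\cdot\nabla u_1-\da\,\nabla\Pi_1-a_2\nabla\dPi.$$
Testing each equation against $|\da|^{p-2}\da$ and $|\du|^{p-2}\du$ respectively, and using $\div u_2=0$ to kill the transport terms, yields (without the Lipschitz continuity of the flow actually being used) the two basic inequalities already displayed as \eqref{eq:u1} and \eqref{eq:u2}.

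The heart of the matter is estimating $\nabla\dPi$ in $L^p$. Subtracting the elliptic equations for $\Pi_1$ and $\Pi_2$ gives
$$\div(a_2\nabla\dPi)=\div\df-\div(\da\,\nabla\Pi_1)-\div(\du\cdot\nabla u_1)-\div(u_2\cdot\nabla\du).$$
The key algebraic observation is that since $\div u_2=\div\du=0$, we have $\div(u_2\cdot\nabla\du)=\div(\du\cdot\nabla u_2)$, so there are \emph{no} derivatives of $\du$ in the right-hand side. Rewriting the left-hand side as
$$a_*\Delta\dPi+\div\bigl((a_2-a_*)\nabla\dPi\bigr),$$
and moving the perturbation to the right, we are in position to apply the flat $L^p$ elliptic estimate of Proposition~\ref{p:Lp}: this is the only result available at index $p\neq 2$ without regularity on $a$, and it is the step that forces the smallness assumption.

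Applying Proposition~\ref{p:Lp} produces a factor $(a^*-a_*)\|\nabla\dPi\|_{L^p}$ on the right, which under the hypothesis $a^*-a_*\leq ca_*$ (for $c$ small depending only on $p,N$) can be absorbed into the left, yielding
$$a_*\|\nabla\dPi\|_{L^p}\leq C\bigl(\|\df\|_{L^p}+\|\nabla\Pi_1\|_{L^\infty}\|\da\|_{L^p}+\|\nabla(u_1+u_2)\|_{L^\infty}\|\du\|_{L^p}\bigr).$$
Plugging this into \eqref{eq:u2}, adding \eqref{eq:u1} and applying Gronwall's lemma to the resulting integral inequality for $\|(\da,\du)(t)\|_{L^p}$ gives exactly the claimed bound.

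The main obstacle is the pressure estimate: standard $L^p$ elliptic theory for operators with rough variable coefficients is unavailable (cf.\ the Meyers discussion in the introduction), so one must reduce to the constant-coefficient Laplacian. The identity $\div(u_2\cdot\nabla\du)=\div(\du\cdot\nabla u_2)$ is crucial because it avoids the need to control $\nabla\du$ in $L^p$, and the closeness of $a$ to a constant is what legitimizes the absorption step. Everything else is bookkeeping.
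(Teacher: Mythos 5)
Your proposal is correct and follows essentially the same route as the paper: the same transport-type $L^p$ energy estimates for $\da$ and $\du$, the same crucial identity $\div(u_2\cdot\nabla\du)=\div(\du\cdot\nabla u_2)$ to avoid derivatives of $\du$, the same rewriting $a_*\Delta\dPi = \div\df+\div\bigl((a_*-a_2)\nabla\dPi\bigr)-\cdots$ with absorption of the small perturbation term via Proposition~\ref{p:Lp}, and Gronwall at the end. Nothing to add.
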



\subsection{A priori estimates}

Here we assume that $(\rho,u,\nabla\Pi)$ is a solution to \eqref{eq:ddeuler} on the time interval $[0,T]$
with the $B^s_{\infty,r}$ regularity. We want to show that if $T$ has been chosen small enough then
the size of the solution at time $t\leq T$ is of the same order as the size of the data. 

First, it is clear that we have
$$
a_*\leq a\leq a^*.$$
Moreover, one may write thanks to Proposition \ref{p:transport}:
\begin{equation}\label{eq:limit1}
\|\nabla a(t)\|_{B^{s-1}_{\infty,r}}\leq e^{C\int_0^t\|u\|_{B^{s}_{\infty,r}}\,d\tau}
\|\nabla a_0\|_{B^{s-1}_{\infty,r}},
\end{equation}
and for the velocity, we have, as in the case $p<\infty,$
\begin{eqnarray}\label{eq:limit2}&&
\|u(t)\|_{B^{s}_{\infty,r}}
\leq e^{C\int_0^t\|u\|_{B^{s}_{\infty,r}}\,d\tau}
\biggl(\|u_0\|_{B^{s}_{\infty,r}}\hspace{3cm}\nonumber
\\&&\hspace{3cm}+\int_0^t e^{-C\int_0^\tau\|u\|_{B^{s}_{\infty,r}}\,d\tau'}
\Bigl(\|f\|_{B^s_{\infty,r}}+\|a\|_{B^{s}_{\infty,r}}\|\nabla\Pi\|_{B^s_{\infty,r}}\Bigr)\,d\tau\biggr).
\end{eqnarray}
Note that applying standard $L^p$ estimates for the transport equation yields
\begin{equation}\label{eq:limit2a}
\|u(t)\|_{L^p}\leq\|u_0\|_{L^p}+\int_0^t\|f\|_{L^p}\,d\tau+ a^*\int_0^t\|\nabla\Pi\|_{L^p}\,d\tau.
\end{equation}
As Propositions \ref{p:Lp} and \ref{p:elliptic} fail in the limit case $p=\infty,$ in order to bound the pressure, 
we have to resort to other arguments. Now, dividing the velocity equation of \eqref{eq:ddeuler} by $\rho$
and applying $\div,$ we get
\begin{equation}\label{eq:limit3}
\ov a\Delta\Pi=\div f-\div(u\cdot\nabla u)+\div((\ov a-a)\nabla\Pi)\quad\hbox{with }\ \ov a:=1/\ov\rho
\end{equation}
and, by virtue of  the Bernstein inequality, we have
$$\begin{array}{lll}
\|\nabla\Pi\|_{B^s_{\infty,r}}
&\leq& \|\Delta_{-1}\nabla\Pi\|_{B^s_{\infty,r}} +\|({\rm Id}-\Delta_{-1})\nabla\Pi\|_{B^s_{\infty,r}},\\[1ex]
&\leq&C\|\nabla\Pi\|_{L^p}+\|({\rm Id}-\Delta_{-1})\nabla\Pi\|_{B^s_{\infty,r}}.
\end{array}
$$
On the one hand, in order to bound the $L^p$ norm of $\nabla\Pi,$ we simply apply the standard $L^p$ elliptic estimate (see Proposition \ref{p:Lp})
to \eqref{eq:limit3}. 
We get
$$
\ov a\|\nabla\Pi\|_{L^p}\leq C\Bigl(\|\cQ f\|_{L^p}+\|u\|_{L^p}\|\nabla u\|_{L^\infty}
+\|\ov a-a\|_{L^\infty}\|\nabla\Pi\|_{L^p}\Bigr).
$$
Hence, if $a^*/\ov a\,-1$ is small enough then 
\begin{equation}\label{eq:limit4}
\ov a\|\nabla\Pi\|_{L^p}\leq C\Bigl(\|\cQ f\|_{L^p}+\|u\|_{L^p}\|\nabla u\|_{L^\infty}\Bigr).
\end{equation}
On the other hand, for bounding the high frequency part of the pressure, one can use 
the fact that Operator $\nabla(-\Delta)^{-1}({\rm Id}-\Delta_{-1})$ is homogeneous
of degree $-1$ away from a ball centered at the origin, hence
maps $B^{s-1}_{p,r}$ in $B^s_{p,r}$ (see e.g. \cite{BCD}, Chap. 2).
Therefore we have
$$\begin{array}{lll}
\ov a\|({\rm Id}-\Delta_{-1})\nabla\Pi\|_{B^s_{\infty,r}}&\leq& C\ov a\|\Delta\Pi\|_{B^{s-1}_{\infty,r}},\\[1ex]
&\leq& C\bigl(\|\div f\|_{B^{s-1}_{\infty,r}}+\|\div(u\cdot\nabla u)\|_{B^{s-1}_{\infty,r}}+
\|\div((\ov a-a)\nabla\Pi)\|_{B^{s-1}_{\infty,r}}\bigr).\end{array}
$$
In order to bound the second term, one may combine
the Bony decomposition and the fact that $\div u=0.$
This gives
$$
\div(u\cdot\nabla u)=\sum_{i,j}\bigl(2 T_{\d_iu^j}\d_ju^i+\d_iR(u^j,\d_ju^i)\bigr).
$$
Thus applying Proposition \ref{p:op}, we may write
$$
\|\div(u\cdot\nabla u)\|_{B^{s-1}_{\infty,r}}\leq C\|\nabla u\|_{L^\infty}\|u\|_{B^s_{\infty,r}}.
$$
Finally, as $B^s_{\infty,r}$ is a Banach algebra, we have 
$$
\|\div((\ov a-a)\nabla\Pi)\|_{B^{s-1}_{\infty,r}}\leq C\|a-\ov a\|_{B^{s}_{\infty,r}}
\|\nabla\Pi\|_{B^{s}_{\infty,r}}.
$$
Putting this together with \eqref{eq:limit4}, one may conclude that
 there exists a constant $c$ such that if
\begin{equation}\label{eq:limit5}
\|a-\ov a\|_{L^\infty_T(B^{s}_{\infty,r})}\leq c\ov a
\end{equation}
then 
\begin{equation}\label{eq:limit6}
\ov a\bigl(\|\nabla\Pi\|_{L^p}+\|\nabla\Pi\|_{B^s_{\infty,r}}\bigr)
\leq C\bigl(\|\cQ f\|_{L^p}+\|\div f\|_{B^{s-1}_{p,r}}
+\|u\|_{L^p\cap B^s_{\infty,r}}\|\nabla u\|_{L^\infty}\bigr).
\end{equation}
Let us assume that  $T$ has been chosen so that
\begin{equation}\label{eq:smalltime}
C\int_0^T\|\nabla u\|_{B^{s-1}_{\infty,r}}\leq\log2
\end{equation}
and that the initial density is such that 
$$
\|a_0-\ov a\|_{B^{s-1}_{\infty,r}}\leq \frac c2\,\ov a.
$$
Then \eqref{eq:limit5} is fulfilled and, combining  Inequalities \eqref{eq:limit2}, \eqref{eq:limit2a} and
\eqref{eq:limit6}, we  get
$$
U(t)\leq 2U_0(t)+C\ov\rho\|a_0\|_{B^s_{\infty,r}}
\int_0^t\Bigl(\|\cQ f\|_{L^p}+\|\div f\|_{B^{s-1}_{p,r}}
+U^2\Bigr)\,d\tau
$$
with 
$$ U(t):=\|u(t)\|_{L^p\cap B^s_{\infty,r}}\quad\hbox{and}\quad
U_0(t):=\|u_0\|_{L^p\cap B^s_{\infty,r}}+\int_0^t\|f\|_{L^p\cap B^s_{\infty,r}}\,d\tau.
$$
It is now easy to find a time $T>0$ depending only on the data and such that 
both Condition \eqref{eq:smalltime} 
and 
$$
U(t)\leq 4U_0(t)\quad\hbox{for all }\ t\in[0,T]
$$
are satisfied.

\subsection{The proof of existence}

This is mainly a matter of making the above estimates rigorous. 
We have to be  a bit careful though    since the data which 
are considered here do not enter in the framework of Theorems \ref{th:main} and \ref{th:2}.

As a first step, we construct a sequence of smooth solutions. 
In order to enter in the Sobolev spaces framework, one may proceed as follows. 

For the density, one may consider $\rho_0^n:=\ov\rho
+S_n\bigl(\phi(n^{-1}\cdot)(\rho_0-\ov\rho)\bigr)$
where $\phi$ is a smooth compactly supported cut-off function with value $1$ 
on the unit ball of $\R^N.$ 
Obviously,  $\rho_0^n-\ov\rho$ is in $H^\infty$
 and converges weakly to $\rho_0-\ov\rho$ when $n$ goes to infinity. 
 In addition, by using the fact that $\phi$ is smooth and that $B^s_{\infty,r}$ 
 is an algebra, one may establish  that there exists some constant $C$
such that for all $n\in\N,$
$$
\|\rho_0^n-\ov\rho\|_{B^s_{\infty,r}}\leq C\|\rho_0-\ov\rho\|_{B^s_{\infty,r}}.
$$
Similarly, for the velocity, one may set  $u_0^n:=S_n (\phi(n^{-1}\cdot) u_0)$ 
and for the source term, 
$f^n:=\alpha_n\star_t \bigl(S_n(\phi(n^{-1})\cdot f)\bigr)$
where the convolution is taken with respect to the time variable only and
$(\alpha_n)_{n\in\N}$ is a sequence of mollifiers on $\R.$
\smallbreak
Applying Theorem \ref{th:main} thus provides a sequence of continuous-in-time solutions with
values in $H^\infty,$ defined on a fixed time interval. 
Then applying the above a priori estimates, it is easy to find a time $T$ independent
of $n$ for which the sequence $(\rho^n,u^n,\nabla\Pi^n)_{n\in\N}$ is bounded
in the desired space.

For proving  convergence,  one may take advantage of the stability estimates in $L^p.$
The proof is similar to that of Theorem \ref{th:main} in the case $1<p\leq2$ and is thus 
omitted. 


\subsection{A continuation criterion}

 This paragraph is dedicated to the proof of the following continuation criterion:
 \begin{prop}\label{p:continuation3}
 Assume that $s>1$ (or that $s\geq1$ if $r=1$). 
   Consider a solution $(\rho,u,\nabla\Pi)$
 to $\eqref{eq:ddeuler}$ on $[0,T[\times\R^N$ such that for some $p\in(1,\infty)$
 we have
 \begin{itemize}
 \item $\rho\in \cC([0,T);B^{s}_{\infty,r}),$
 \item $u\in\cC([0,T);B^\infty_{\infty,r}\cap L^p),$
 \item $\nabla\Pi\in L^1([0,T);B^s_{\infty,r}\cap L^p).$
 \end{itemize}
 There exists a constant $c$ depending only on $N$ and $s$ such that if for some $\ov\rho>0$ we have
 $$
\sup_{0\leq t<T}\|\rho(t)-\ov\rho\|_{B^s_{\infty,r}}\leq c\ov\rho\quad\hbox{and}\quad
\int_0^T\|\nabla u\|_{L^\infty}\,dt<\infty
$$
then $(\rho,u,\nabla\Pi)$ may be continued beyond $T$ into 
a  $B^s_{\infty,r}$ solution of $\eqref{eq:ddeuler}.$ 
 \end{prop}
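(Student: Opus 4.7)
The plan is to mirror the two-step strategy of Lemmas \ref{l:continuation1} and \ref{l:continuation2}: first, derive that the relevant norms of $(\rho,u,\nabla\Pi)$ stay finite up to time $T$ under the hypotheses of the proposition; second, invoke Theorem \ref{th:3} starting from some time $T-\eps/2$ just below $T$, the lifespan $\eps$ being controlled by the uniform bounds obtained in the first step. All the necessary ingredients have already been built up in Section \ref{s:th:3}; the job is merely to recombine them.

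For the density, I would apply Proposition \ref{p:transport} to the system $\d_t\d_ka+u\cdot\nabla\d_ka=-\d_ku\cdot\nabla a$ (with $a=1/\rho$), exactly as in the derivation of \eqref{eq:bu2}, and get
$$\|\nabla a(t)\|_{B^{s-1}_{\infty,r}}\leq \|\nabla a_0\|_{B^{s-1}_{\infty,r}}+C\int_0^t\bigl(\|\nabla u\|_{L^\infty}\|\nabla a\|_{B^{s-1}_{\infty,r}}+\|\nabla a\|_{L^\infty}\|\nabla u\|_{B^{s-1}_{\infty,r}}\bigr)\,d\tau.$$
For the velocity, I would use the last part of Proposition \ref{p:transport} (which tolerates just an $L^\infty$ estimate on $\nabla u,$ since here the transport field coincides with the transported quantity) together with Corollary \ref{c:op} to bound $\|a\nabla\Pi\|_{B^s_{\infty,r}}$ by $C(a^*+\|\nabla a\|_{B^{s-1}_{\infty,r}})\|\nabla\Pi\|_{B^s_{\infty,r}},$ and the analogue of \eqref{eq:limit2a} to bound $\|u\|_{L^p}.$ For the pressure, I would invoke the already-established a priori bound \eqref{eq:limit6}; the smallness condition \eqref{eq:limit5} required for it translates, via Proposition \ref{p:comp}, into the hypothesis $\sup_{[0,T)}\|\rho-\ov\rho\|_{B^s_{\infty,r}}\leq c\ov\rho$ upon possibly shrinking $c$.

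Plugging \eqref{eq:limit6} into the velocity estimate, and then using the resulting bound on $\|u\|_{B^s_{\infty,r}}$ in the density estimate, one obtains after Gronwall's lemma applied to $\|\nabla a\|_{B^{s-1}_{\infty,r}}+\|u\|_{B^s_{\infty,r}\cap L^p}$ a bound expressed only in terms of $\int_0^T(\|\nabla u\|_{L^\infty}+\|f\|_{L^p\cap B^s_{\infty,r}})\,d\tau+\|\cQ f\|_{L^1_T(L^p)}+\|\div f\|_{L^1_T(B^{s-1}_{\infty,r})},$ which is finite by hypothesis. Once these uniform bounds on $[0,T)$ are in hand, the explicit lower bound on the lifespan produced by the proof of Theorem \ref{th:3} (analogous to \eqref{eq:timeb}) yields a uniform $\eps>0$ such that, starting at time $T':=T-\eps/2$ with data $(\rho(T'),u(T'),f(T'+\cdot))$ -- which still satisfy the smallness condition by assumption -- Theorem \ref{th:3} produces a solution on $[T',T'+\eps]=[T-\eps/2,T+\eps/2].$ By the uniqueness given by Proposition \ref{eq:uniq3} it coincides with the original solution on $[T',T),$ and therefore extends it past $T.$ The main obstacle is the circular dependence between $\|\nabla\Pi\|_{B^s_{\infty,r}}$ and $\|u\|_{B^s_{\infty,r}}$ in the estimates; this is precisely what the smallness assumption on $\rho-\ov\rho$ is there to handle, since it allows the perturbative pressure bound \eqref{eq:limit6} to be closed against $\|u\|_{L^p\cap B^s_{\infty,r}}\|\nabla u\|_{L^\infty}$ with $\|\nabla u\|_{L^\infty}$ coming from the hypothesis and $\|u\|_{L^p\cap B^s_{\infty,r}}$ absorbed by Gronwall.
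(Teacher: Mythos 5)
Your overall plan matches the paper's: bound the relevant norms on $[0,T)$ using \eqref{eq:limit6}, \eqref{eq:limit2a}, \eqref{eq:limit4} and Proposition \ref{p:transport}, then restart from $T-\eps/2$ via Theorem \ref{th:3}. But there is a genuine gap in the first step, precisely at the density estimate and the coupled Gronwall you propose.

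You derive the estimate
$$\|\nabla a(t)\|_{B^{s-1}_{\infty,r}}\leq \|\nabla a_0\|_{B^{s-1}_{\infty,r}}+C\int_0^t\bigl(\|\nabla u\|_{L^\infty}\|\nabla a\|_{B^{s-1}_{\infty,r}}+\|\nabla a\|_{L^\infty}\|\nabla u\|_{B^{s-1}_{\infty,r}}\bigr)\,d\tau$$
and then claim that Gronwall applied to $\|\nabla a\|_{B^{s-1}_{\infty,r}}+\|u\|_{B^s_{\infty,r}\cap L^p}$ gives a bound depending only on $\int_0^T\|\nabla u\|_{L^\infty}\,dt$ and the data. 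This does not close as stated: the term $\|\nabla a\|_{L^\infty}\|\nabla u\|_{B^{s-1}_{\infty,r}}$ is of order $\|\nabla a\|_{B^{s-1}_{\infty,r}}\|u\|_{B^s_{\infty,r}}$, i.e. \emph{quadratic} in the unknown sum, and $\|\nabla u\|_{B^{s-1}_{\infty,r}}$ is certainly not controlled by $\|\nabla u\|_{L^\infty}$. A Gronwall inequality of the form $X(t)\leq X_0+\int_0^t(gX+X^2)$ with $g\in L^1$ permits finite-time blow-up of $X$, so the argument breaks. Moreover, the density transport estimate is superfluous here: the hypothesis $\sup_{0\leq t<T}\|\rho(t)-\ov\rho\|_{B^s_{\infty,r}}\leq c\ov\rho$, together with Proposition \ref{p:comp}, already gives a uniform bound on $\|a\|_{B^s_{\infty,r}}$ (and on $\|\nabla a\|_{B^{s-1}_{\infty,r}}$) over $[0,T)$. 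This is exactly what the paper uses: no density estimate at all. The term $\|a\|_{B^s_{\infty,r}}\|\nabla\Pi\|_{B^s_{\infty,r}}$ in the velocity estimate is then bounded by a \emph{known} constant times $\|\nabla\Pi\|_{B^s_{\infty,r}}$, so after substituting \eqref{eq:limit6} the only unknown appearing is $\|u\|_{L^p\cap B^s_{\infty,r}}$, multiplied by the integrable weight $\|\nabla u\|_{L^\infty}$, and Gronwall closes \emph{linearly}. Once you drop the density step and let the hypothesis do that work, the rest of your argument (the restart from $T-\eps/2$, uniqueness via Proposition \ref{eq:uniq3}) matches the paper.
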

 \begin{proof}
 Applying the last part of Proposition \ref{p:transport} and product estimates 
 to the velocity equation of \eqref{eq:ddeuler} yields
 for all $t\in[0,T),$
 $$\displaylines{
 \|u(t)\|_{B^s_{\infty,r}}\leq 
 e^{C\int_0^t\|\nabla u\|_{L^\infty}\,d\tau}\biggl(
 \|u_0\|_{B^s_{\infty,r}}\hfill\cr\hfill+\int_0^t e^{-C\int_0^\tau\|\nabla u\|_{L^\infty}\,d\tau'}
\Bigl(\|f\|_{B^s_{\infty,r}}+\|a\|_{B^{s}_{\infty,r}}\|\nabla\Pi\|_{B^s_{\infty,r}}\Bigr)\,d\tau\biggr).}
$$ 
Let  us bound the pressure term according to Inequality \eqref{eq:limit6}. 
Combining with \eqref{eq:limit2a} and \eqref{eq:limit4}, we eventually get
 $$\displaylines{
\|u(t)\|_{L^p\cap B^s_{\infty,r}}\leq 
 e^{C\int_0^t\|\nabla u\|_{L^\infty}\,d\tau}\biggl(
 \|u_0\|_{B^s_{\infty,r}}\hfill\cr\hfill
 +\ov\rho\int_0^t e^{-C\int_0^\tau\|\nabla u\|_{L^\infty}\,d\tau'}
 \|a\|_{B^s_{\infty,r}}
\Bigl(\|f\|_{L^p\cap B^s_{\infty,r}}+\|u\|_{L^p\cap B^s_{\infty,r}}\|\nabla u\|_{L^\infty}
\Bigr)\,d\tau\biggr).}
$$ 
So applying Gronwall's lemma ensures that $u$ belongs to 
$L^\infty([0,T);L^p\cap B^s_{\infty,r}).$
{}From this point, completing the proof is similar as for the 
previous continuation criteria.
\end{proof}
\begin{rem} 
As in the $B^s_{p,r}$ framework,  an improved continuation
criterion involving $\|\nabla u\|_{\dot B^0_{\infty,\infty}}$ instead of $\|\nabla u\|_{L^\infty}$
may be proved for the $B^s_{\infty,r}$ regularity, if  $s>1.$
The details are left to the reader.
\end{rem}


\appendix

\section{Commutator estimates}

Here we prove two estimates that have been used for estimating the pressure. 
The first result reads:
\begin{lem}\label{l:com}
Let $(s,p,r)$ satisfy Condition $(C).$
Let $\varsigma$ be in $(-1,s-1].$ 
There exists a constant $C$ depending only on $s,$ $p,$ $r,$ $\varsigma$ and $N$ such that
for all $k\in\{1,\cdots,N\},$ we have
$$
\|\d_k[a,\dq]w\|_{L^p}\leq 
Cc_q2^{-q\varsigma}\|\nabla a\|_{B^{s-1}_{p,r}}\|w\|_{B^{\varsigma}_{p,r}}\quad\hbox{for all }\ q\geq-1
$$
with $\|(c_q)_{q\geq-1}\|_{\ell^r}=1.$
\end{lem}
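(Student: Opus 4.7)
The key observation is that $[a,\Delta_q]w=[a-c,\Delta_q]w$ for any constant $c$, so only $\nabla a$ should enter the bound. I will apply Bony's decomposition to both $aw$ and $a\Delta_q w$ and combine the resulting pieces as
\begin{equation*}
[a,\Delta_q]w=[T_a,\Delta_q]w+\bigl(T_{\Delta_q w}a-\Delta_q T_w a\bigr)+\bigl(R(a,\Delta_q w)-\Delta_q R(a,w)\bigr),
\end{equation*}
and then estimate each of the three groups in $L^p$ after applying $\partial_k$.

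For the genuine paraproduct commutator, spectral support considerations restrict the sum defining $[T_a,\Delta_q]w$ to indices $|q'-q|\leq N_0$. I plan to represent $\Delta_q$ as convolution against $2^{qN}h(2^q\cdot)$ with $h=\cF^{-1}\varphi$, and use the first-order Taylor expansion $S_{q'-1}a(y)-S_{q'-1}a(x)=(y-x)\cdot\int_0^1\nabla S_{q'-1}a(x+t(y-x))\,dt$; this factors one $2^{-q}$ out of the kernel while leaving $\|\nabla S_{q'-1}a\|_{L^\infty}$ in front. The derivative $\partial_k$ then restores the $2^q$ factor, whether acting on the kernel or on $S_{q'-1}a$ directly. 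Summing over $q'$ with $\|\Delta_{q'}w\|_{L^p}\leq c_{q'}2^{-q'\varsigma}\|w\|_{B^\varsigma_{p,r}}$ and invoking $\|\nabla a\|_{L^\infty}\lesssim\|\nabla a\|_{B^{s-1}_{p,r}}$ (from Condition~$(C)$) produces the announced bound for this piece.

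The remaining four terms carry no commutator cancellation, so I will bound them by dyadic bookkeeping. In the representative case $T_{\Delta_q w}a$, the spectral support of $\Delta_q w$ forces $S_{q''-1}(\Delta_q w)=\Delta_q w$ for $q''\geq q+2$, hence $T_{\Delta_q w}a\approx\Delta_q w\sum_{q''\geq q+2}\Delta_{q''}a$, with each summand $\Delta_q w\cdot\Delta_{q''}a$ essentially localized at frequency $2^{q''}$, so $\partial_k$ costs a factor $2^{q''}$. Combining $\|\Delta_q w\|_{L^p}\leq c_q2^{-q\varsigma}\|w\|_{B^\varsigma_{p,r}}$ with the Bernstein bound $\|\Delta_{q''}a\|_{L^\infty}\lesssim c_{q''}2^{q''(N/p-s)}\|\nabla a\|_{B^{s-1}_{p,r}}$ yields a geometric series in $q''$ with exponent $1+N/p-s$, which is non-positive by Condition~$(C)$. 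The pieces $\Delta_q T_w a$, $R(a,\Delta_q w)$ and $\Delta_q R(a,w)$ are handled analogously, using Bernstein to put $\nabla a$ in $L^\infty$ on the relevant dyadic blocks and the continuity results of Proposition~\ref{p:op} where useful; the assumption $\varsigma>-1$ is used exactly to ensure $s+\varsigma>N/p$, so that the remainder-type sums converge.

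The main obstacle, as usual in commutator estimates of this kind, will be the high-high term $T_{\Delta_q w}a$: one must correctly identify the \emph{output} frequency (driven by $q''$, not $q$) in order to see that the $2^{q''}$ produced by $\partial_k$ is absorbed by the decay coming from $\nabla a\in B^{s-1}_{p,r}$, which works only because Condition~$(C)$ gives $1+N/p-s\leq0$ sharply. The range $\varsigma\in(-1,s-1]$ appears at the two endpoints of this balance: $\varsigma>-1$ makes the remainder-type sums converge, while $\varsigma\leq s-1$ guarantees the one-derivative gap between $a$ and $w$ that every other piece exploits.
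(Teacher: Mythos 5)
Your decomposition is essentially the same as the paper's (Bony on both $a\Delta_q w$ and $\Delta_q(aw)$, commutator group plus four residual pieces), and your treatment of $[T_a,\Delta_q]w$ by Taylor expansion of $S_{q'-1}a$ reproves the standard commutator lemma and is fine. The real problem is in the sentence ``the remaining four terms carry no commutator cancellation''. That is false for the low-frequency block $\Delta_{-1}a$, and this is exactly where your dyadic bookkeeping breaks down. Your Bernstein bound $\|\Delta_{q''}a\|_{L^\infty}\lesssim c_{q''}2^{q''(N/p-s)}\|\nabla a\|_{B^{s-1}_{p,r}}$ uses $\|\Delta_{q''}\nabla a\|_{L^p}\approx 2^{q''}\|\Delta_{q''}a\|_{L^p}$, which is valid only for $q''\geq 0$; it fails at $q''=-1$. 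The block $\Delta_{-1}a$ appears in $R(a,\Delta_q w)$ and in $\Delta_q R(a,w)$ whenever $q$ is small, and $\|\Delta_{-1}a\|_{L^\infty}$ cannot be controlled by $\|\nabla a\|_{B^{s-1}_{p,r}}$ (take $a$ constant: $\nabla a\equiv 0$, yet $R(a,\Delta_q w)=\Delta_q w\neq 0$ for $q\leq 0$). The only reason the lemma is still true in that regime is that the \emph{difference} $R(\Delta_{-1}a,\Delta_q w)-\Delta_q R(\Delta_{-1}a,w)$ retains the commutator structure and therefore does vanish for constant $a$; estimating the two remainders separately, as you propose, loses this cancellation irrecoverably.

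The paper fixes this by peeling off the low-frequency block first: it sets $\tilde a:=a-\Delta_{-1}a$, notes that $\|\tilde a\|_{B^s_{p,r}}\leq C\|\nabla a\|_{B^{s-1}_{p,r}}$ (possible because $\tilde a$ has no frequency in the unit ball), runs your type of dyadic estimates on the pieces built from $\tilde a$ (its $R_q^1,R_q^2,R_q^3$), and keeps the residual low-frequency contribution as a genuine commutator $R_q^4=\partial_k[\Delta_{-1}a,\Delta_q]w$, which it bounds directly via the commutator lemma with $\|\nabla\Delta_{-1}a\|_{L^\infty}\leq\|\nabla a\|_{L^\infty}$. To repair your argument you must do something equivalent: either replace $a$ by $\tilde a$ in the Bony terms and add the $[\Delta_{-1}a,\Delta_q]$ commutator back in, or keep the grouped difference $R(a,\Delta_q w)-\Delta_q R(a,w)$ intact for the $\Delta_{-1}a$ part instead of splitting it. As written, the proposal has no mechanism to avoid $\|\Delta_{-1}a\|_{L^\infty}$ appearing in the bound.
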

\begin{proof}
We follow the proof of Lemma 8.8 in \cite{D4}. 
Let $\tilde a:=a-\Delta_{-1}a.$ 
Taking advantage of the Bony decomposition \eqref{eq:bony}, we rewrite the commutator as\footnote{Recall 
the notation $T'_uv:=T_uv+R(u,v).$}
\begin{equation}\label{eq:dec}
\d_k([a,\dq]w)=
\underbrace{\d_k([T_{\tilde a},\dq]w)}_{R_q^1}+\underbrace{\d_kT'_{\dq w}\tilde a}_{R_q^2}
-\underbrace{\d_k\dq T'_w\tilde a}_{R_q^3}+\underbrace{\d_k[\Delta_{-1}a,\dq]w}_{R_q^4}.
\end{equation}
{}From the localization properties of the Littlewood-Paley decomposition, we gather that
$$
R_q^1=\sum_{|q'-q|\leq4}\d_k\bigl([S_{q'-1}\tilde a,\dq]\Delta_{q'}w\bigr).
$$
Note that $R_q^1$ is spectrally supported in an annulus of size $2^q.$ 
Hence, combining Bernstein's inequality and Lemma 2.97 in \cite{BCD}, we get
$$
\|R_q^1\|_{L^p}\leq C\sum_{|q'-q|\leq4}\|\nabla S_{q'-1}\tilde a\|_{L^\infty}\|\Delta_{q'}w\|_{L^p},
$$
whence for some sequence $(c_q)_{q\geq-1}$ in the unit sphere of $\ell^r,$
\begin{equation}\label{eq:com1}
\|R_q^1\|_{L^p}\leq C c_q2^{-q\varsigma}\|\nabla a\|_{L^\infty}\|w\|_{B^{\varsigma}_{p,r}}.
\end{equation}
To deal with $R_q^2,$ we use the fact that, owing to the  localization properties of the Littlewood-Paley decomposition, we have
$$R_q^2=\sum_{q'\geq q-2}
\d_k\bigl(S_{q'+2}\dq w\,\Delta_{q'}\tilde a\bigr).
$$ 
Hence, using the Bernstein and H\"older inequalities and the fact that
$\tilde a$ has no low frequencies, 
$$\begin{array}{lll}
\|R_q^2\|_{L^p}&\leq& C\Sum_{q'\geq q-2}\| S_{q'+2}\dq w\|_{L^\infty}
\|\Delta_{q'}\nabla\tilde a\|_{L^p},\\[1ex]
&\leq&C 2^{-q\varsigma}\,2^{q(\frac Np+1-s)}\Sum_{q'\geq q-2}2^{(q-q')(s-1)}\bigl(2^{q(\varsigma-\frac Np)}\|\dq w\|_{L^\infty}\bigr)
\bigl(2^{q'(s-1)}\|\Delta_{q'}\nabla\tilde a\|_{L^p}\bigr).
\end{array}
$$ 
Therefore, by virtue  of convolution inequalities for series and because $N/p+1-s\leq0,$ 
\begin{equation}\label{eq:com2}
\|R_q^2\|_{L^p}\leq C c_q2^{-q\varsigma}\|\nabla a\|_{B^{s-1}_{p,r}}
\|w\|_{B^{\varsigma-\frac Np}_{\infty,r}}.
\end{equation}
Next, Proposition \ref{p:op} ensures that,
under the assumptions of the lemma, the paraproduct and the remainder 
map $B^\varsigma_{p,r}\times B^s_{p,r}$ in $B^\varsigma_{p,r}.$
As moreover we have
\begin{equation}\label{eq:com3}
\|\tilde a\|_{B^s_{p,r}}\leq C\|\nabla a\|_{B^{s-1}_{p,r}},
\end{equation}
one may conclude that 
\begin{equation}\label{eq:com4}
\|R_q^3\|_{L^p}
\leq 
Cc_q2^{-q\varsigma}\|\nabla a\|_{B^{s-1}_{p,r}}\|w\|_{B^{\varsigma}_{p,r}}.
\end{equation}
Finally, as the last term $R_q^4$ is spectrally localized in a ball of size $2^q,$
Bernstein's inequality ensures that 
$$
\|R_q^4\|_{L^p}\leq C2^q \|[\Delta_{-1}a,\dq]w\|_{L^p}.
$$
Then, resorting again to Lemma 2.97 in \cite{BCD}, we get
\begin{equation}\label{eq:com5}
\|R_q^4\|_{L^p}
\leq 
Cc_q2^{-q\varsigma}\|\nabla a\|_{L^\infty}\|w\|_{B^{\varsigma}_{p,r}}.
\end{equation}
Putting Inequalities \eqref{eq:com1}, \eqref{eq:com2}, \eqref{eq:com4} and
 \eqref{eq:com5} together and using Proposition \ref{p:embed}
 completes the proof of the lemma.
\end{proof}
\begin{lem}\label{l:combis}
Let $\varsigma>0$ and $1\leq p,r\leq\infty.$ There exists a constant $C$ such that 
$$
\|\d_k[a,\dq]w\|_{L^p}\leq 
Cc_q2^{-q\varsigma}\Bigl(\|\nabla a\|_{L^\infty}\|w\|_{B^{\varsigma}_{p,r}}
+\|w\|_{L^\infty}\|\nabla a\|_{B^\varsigma_{p,r}}\Bigr)
\quad\hbox{for all }\ q\geq-1
$$
with $\|(c_q)_{q\geq-1}\|_{\ell^r}=1.$
\end{lem}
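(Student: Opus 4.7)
The plan is to mirror the proof of Lemma~\ref{l:com}, using the same Bony-based decomposition: with $\tilde a := a-\Delta_{-1}a$,
\[
\d_k([a,\dq]w)
=\underbrace{\d_k([T_{\tilde a},\dq]w)}_{R_q^1}+\underbrace{\d_kT'_{\dq w}\tilde a}_{R_q^2}
-\underbrace{\d_k\dq T'_w\tilde a}_{R_q^3}+\underbrace{\d_k[\Delta_{-1}a,\dq]w}_{R_q^4}.
\]
The two terms in the desired bound arise from a different pairing than before: $R_q^1$ and $R_q^4$ will produce the $\|\nabla a\|_{L^\infty}\|w\|_{B^\varsigma_{p,r}}$ contribution (in each, the commutator naturally extracts a gradient of $a$), while $R_q^2$ and $R_q^3$ will produce the symmetric $\|w\|_{L^\infty}\|\nabla a\|_{B^\varsigma_{p,r}}$ contribution (in each, $w$ appears as a low-frequency factor, so it can be put in $L^\infty$). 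Observe that Condition~$(C)$ is no longer needed: the $L^\infty$ norms are assumed directly rather than deduced by embedding.

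For $R_q^1$, the argument of Lemma~\ref{l:com} applies verbatim via Lemma~2.97 of \cite{BCD}, yielding $\|R_q^1\|_{L^p}\leq Cc_q 2^{-q\varsigma}\|\nabla a\|_{L^\infty}\|w\|_{B^\varsigma_{p,r}}$; the term $R_q^4$ is handled in the same spirit, using that $\nabla\Delta_{-1}a$ is bounded by $\nabla a$ in $L^\infty$. For $R_q^3$ the key point is that $T'_w\tilde a$ lies in $B^{\varsigma+1}_{p,r}$ with norm controlled by $\|w\|_{L^\infty}\|\nabla a\|_{B^\varsigma_{p,r}}$: the paraproduct part is estimated by Proposition~\ref{p:op} directly, and the remainder $R(w,\tilde a)$ by the same proposition using the embedding $L^\infty\hookrightarrow B^0_{\infty,\infty}$ together with $0+(\varsigma+1)>0$. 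Isolating the $q$-th block and then differentiating costs a factor $2^{-q\varsigma}$, giving the claimed bound on $R_q^3$.

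The delicate term is $R_q^2$. Following Lemma~\ref{l:com}'s spectral decomposition,
\[
\|R_q^2\|_{L^p}\leq C\sum_{q'\geq q-2}\|S_{q'+2}\dq w\|_{L^\infty}\|\Delta_{q'}\nabla\tilde a\|_{L^p}\leq C\|w\|_{L^\infty}\sum_{q'\geq q-2}\|\Delta_{q'}\nabla a\|_{L^p}.
\]
Writing $\|\Delta_{q'}\nabla a\|_{L^p}=c'_{q'}2^{-q'\varsigma}\|\nabla a\|_{B^\varsigma_{p,r}}$ with $(c'_{q'})$ in the unit sphere of $\ell^r$, the sum rewrites as $2^{-q\varsigma}\sum_{k\geq -2}c'_{q+k}2^{-k\varsigma}\|\nabla a\|_{B^\varsigma_{p,r}}$. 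Here the hypothesis $\varsigma>0$ enters crucially: the weight $k\mapsto 2^{-k\varsigma}\mathbf{1}_{k\geq -2}$ lies in $\ell^1$, and a discrete Young inequality for convolution of sequences yields $\|R_q^2\|_{L^p}\leq Cc_q 2^{-q\varsigma}\|w\|_{L^\infty}\|\nabla a\|_{B^\varsigma_{p,r}}$, as required.

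The main subtlety is the low-frequency block $q=-1$, where spectral localization degenerates and the decomposition above loses its meaning. One argues directly from the kernel representation $[a,\Delta_{-1}]w(x)=\int K(x-y)(a(x)-a(y))w(y)\,dy$ with $K=\cF^{-1}\chi$ Schwartz, the Lipschitz bound $|a(x)-a(y)|\leq\|\nabla a\|_{L^\infty}|x-y|$, and the embedding $B^\varsigma_{p,r}\hookrightarrow L^p$ (valid since $\varsigma>0$). Differentiation then either falls on $K$ or produces a $\nabla a(x)$ factor, both of which are controlled by $\|\nabla a\|_{L^\infty}\|w\|_{L^p}\leq C\|\nabla a\|_{L^\infty}\|w\|_{B^\varsigma_{p,r}}$. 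Summing the four contributions completes the proof.
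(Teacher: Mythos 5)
Your proof follows the same decomposition and the same estimates as the paper: $R_q^1,R_q^4$ are recycled from Lemma~\ref{l:com}, $R_q^3$ is handled via $\|T'_w\tilde a\|_{B^{\varsigma+1}_{p,r}}\leq C\|w\|_{L^\infty}\|\tilde a\|_{B^{\varsigma+1}_{p,r}}$ through Proposition~\ref{p:op}, and $R_q^2$ via putting $S_{q'+2}\dq w$ in $L^\infty$ and a discrete convolution using $\varsigma>0$. One remark: the separate kernel argument you propose for $q=-1$ is superfluous. Decomposition~\eqref{eq:dec} is an algebraic identity valid for all $q\geq-1$, and the bounds for $R_q^1$ and $R_q^4$ rest on Lemma~2.97 of \cite{BCD} (a kernel/mean-value estimate) and the \emph{first} Bernstein inequality (ball support), neither of which degenerates at $q=-1$; for $R_q^2$ the sum over $q'\geq q-2$ is still over $q'\geq 0$ because $\tilde a$ has no $\Delta_{-1}$ block, and for $R_q^3$ extracting the block costs $c_{-1}2^{\varsigma}$ just as for $q\geq0$. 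Your direct argument for $\d_k[a,\Delta_{-1}]w$ is nonetheless correct, so this is a harmless redundancy rather than an error.
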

\begin{proof}
We use again Decomposition \eqref{eq:dec}.
We have already proved in \eqref{eq:com1} and \eqref{eq:com5}  that $R_q^1$ and $R_q^4$ satisfy the desired inequality.
Concerning $R_q^2,$  recall that
$$
\|R_q^2\|_{L^p}\leq C\sum_{q'\geq q-2} \|S_{q'-1}\dq w\|_{L^\infty}\|\Delta_{q'}\nabla a\|_{L^p},
$$
 whence
 $$
\|R_q^2\|_{L^p}\leq C2^{-q\varsigma}
\sum_{q'\geq q-2} 2^{(q-q')\varsigma}\,\|w\|_{L^\infty}\,2^{q'\varsigma}\|\Delta_{q'}\nabla\tilde a\|_{L^p}.
$$
As $\varsigma>0,$ convolution inequalities for series yield the desired inequality for $R_q^2.$
\smallbreak
According to Proposition \ref{p:op}, we have 
$$
\|T'_{w}\tilde a\|_{B^{\varsigma+1}_{p,r}}\leq C\|w\|_{L^\infty}\|\tilde a\|_{B^{\varsigma+1}_{p,r}}.
$$
Hence, as $\|\tilde a\|_{B^{\varsigma+1}_{p,r}}\leq C\|\nabla a\|_{B^{\varsigma}_{p,r}},$
the term  $R_q^3$ satisfies the required inequality. 
\end{proof}


\section{A Bernstein-type inequality}

\begin{lem}\label{l:Bernstein}
Let $1<p<\infty$  and $u\in L^p$ such that ${\rm Supp}\,\widehat u
\subset \{\xi\in\R^N\,/\, R_1\leq|\xi|\leq R_2\}$ for some  real numbers
$R_1$ and $R_2$ such that $0<R_1<R_2.$ Let $a$ be a bounded measurable function over
$\R^N$ such that $a\geq a_*>0$ a.e. 
There exists a constant
$c$ depending only on $N$ and $R_2/R_1$, and such that
\begin{equation}\label{eq:bernstein} ca_*\biggl(\frac{p-1}{p^2}\biggr)R_1^2\int|u|^p\,dx\leq(p-1)\int a|\nabla u|^2|u|^{p-2}\,dx
=-\int \div(a\nabla u)\,|u|^{p-2}u\,dx.
\end{equation}
\end{lem}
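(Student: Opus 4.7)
The plan is to prove the equality by integrating by parts, and then derive the inequality via a ``dual Bernstein'' argument that splits at $p=2$ to accommodate the singularity of the weight $|u|^{p-2}$. For the equality, I would multiply $-\div(a\nabla u)$ by $|u|^{p-2}u$ and integrate by parts, using the chain rule $\nabla(|u|^{p-2}u)=(p-1)|u|^{p-2}\nabla u$, valid for real $u$ outside its zero set (a null set since spectrally localized $u$ is real-analytic), or by first regularizing $|u|^{p-2}u$ as $(u^2+\eps^2)^{(p-2)/2}u$ and passing to the limit. For the inequality, using $a\geq a_*$ reduces matters to establishing
\begin{equation*}
R_1^2\int|u|^p\,dx\leq C\int|\nabla u|^2|u|^{p-2}\,dx,
\end{equation*}
with $C$ depending only on $N$, $R_2/R_1$, and boundedly on $p/(p-1)$; the factor $(p-1)/p^2$ in the statement then emerges from comparing algebraic factors with the $(p-1)$ prefactor on the right.

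For $p\geq 2$, I would fix a smooth cutoff $\tilde\varphi$ equal to $1$ on $\{R_1\leq|\xi|\leq R_2\}$ and compactly supported in a larger annulus bounded away from the origin, and set $\phi(\xi):=\tilde\varphi(\xi)/|\xi|^2$ so that $u=-\Delta[\phi(D)u]=-\Delta U$ with $U:=\phi(D)u$. Integration by parts followed by Cauchy--Schwarz with weight $|u|^{p-2}$ yields
\begin{equation*}
\int|u|^p\,dx=(p-1)\int\nabla U\cdot|u|^{p-2}\nabla u\,dx\leq(p-1)\Bigl(\int|\nabla U|^2|u|^{p-2}\Bigr)^{1/2}\Bigl(\int|\nabla u|^2|u|^{p-2}\Bigr)^{1/2}.
\end{equation*}
The convolution kernel $K$ defining $\nabla U=K*u$ satisfies the Schwartz-type pointwise bound $|K(x)|\leq CR_1^{N-1}\sigma(R_1x)$, hence $|\nabla U(x)|\leq(C/R_1)\mathcal{M}|u|(x)$ with $\mathcal{M}$ the Hardy--Littlewood maximal function. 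H\"older with conjugate exponents $p/2$ and $p/(p-2)$ combined with the $L^p$-boundedness of $\mathcal{M}$ then produces $\int|\nabla U|^2|u|^{p-2}\leq(C/R_1^2)\int|u|^p$, and substituting back gives $R_1^2\int|u|^p\leq C(p-1)^2\int|\nabla u|^2|u|^{p-2}$.

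For $1<p<2$, the H\"older exponent $p/(p-2)$ becomes negative and the weight $|u|^{p-2}$ is singular at zeros of $u$, so the approach above fails. I would instead invoke the plain Bernstein inequality of Lemma~\ref{lpfond}, $R_1\|u\|_{L^p}\leq C\|\nabla u\|_{L^p}$, and decompose $|\nabla u|^p=(|\nabla u|^2|u|^{p-2})^{p/2}|u|^{-p(p-2)/2}$. H\"older with the conjugate pair $2/p,\,2/(2-p)$ (both $\geq 1$ for $p\leq 2$) then gives
\begin{equation*}
\int|\nabla u|^p\,dx\leq\Bigl(\int|\nabla u|^2|u|^{p-2}\,dx\Bigr)^{p/2}\Bigl(\int|u|^p\,dx\Bigr)^{(2-p)/2},
\end{equation*}
and combining with Bernstein, after simplifying exponents, yields $R_1^2\int|u|^p\leq C\int|\nabla u|^2|u|^{p-2}$. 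In either regime, multiplying by $a_*(p-1)$ and comparing $(p-1)/p^2$ with $(p-1)^{-2}$ (for $p\geq 2$) or with $1$ (for $1<p<2$) delivers the lemma with a constant $c$ depending only on $R_2/R_1$ and $N$.

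The main obstacle is precisely this transition at $p=2$: Cauchy--Schwarz with weight $|u|^{p-2}$ is natural for $p\geq 2$ but degenerates when the weight becomes singular, and the H\"older--Bernstein detour for $p<2$ must be calibrated carefully so that the final constant $c$ stays uniform in $p$ across the whole range $(1,\infty)$.
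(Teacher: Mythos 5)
Your self-contained proof of the weighted Bernstein inequality (the left-hand inequality) is correct and goes beyond the paper, which simply cites \cite{D2} for the case $a\equiv1$ and then uses $a\geq a_*$. Your two-regime argument — writing $u=-\Delta\bigl(\phi(D)u\bigr)$, integrating by parts, weighted Cauchy--Schwarz, and a maximal-function bound for $p\geq2$; plain Bernstein plus the H\"older split with exponents $2/p$ and $2/(2-p)$ for $1<p<2$ — produces $R_1^2\int|u|^p\leq C(p-1)^2\int|\nabla u|^2|u|^{p-2}$ and $R_1^2\int|u|^p\leq C\int|\nabla u|^2|u|^{p-2}$ respectively, and both bounds absorb the $\frac{p-1}{p^2}$ prefactor with a $c$ depending only on $N$ and $R_2/R_1$.

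The gap lies in the equality part, which is where the paper spends almost its entire proof. For $1<p<2$ you propose either the formal chain rule $\nabla(|u|^{p-2}u)=(p-1)|u|^{p-2}\nabla u$ a.e., or regularizing by $\psi_\eps(x)=(x^2+\eps^2)^{(p-2)/2}x$ and passing to the limit — but neither is executed, and the limit passage is exactly the delicate step. Note that $\psi_\eps=T_\eps^{p-1}T_\eps'$ with $T_\eps(x)=\sqrt{x^2+\eps^2}$, the paper's test function, and $\psi_\eps'(x)=(x^2+\eps^2)^{(p-4)/2}\bigl[(p-1)x^2+\eps^2\bigr]$, which is \emph{not} monotone in $\eps$; dominated convergence for $\int a|\nabla u|^2\psi_\eps'(u)$ would require knowing in advance that $\int a|\nabla u|^2|u|^{p-2}\,dx<\infty$, which is part of what must be proved. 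The paper resolves this by splitting $\psi_\eps'=(p-1)|T_\eps'|^2T_\eps^{p-2}+T_\eps''T_\eps^{p-1}$: the first summand is monotone increasing as $\eps\downarrow0$, so monotone convergence identifies its limit with the (possibly infinite) target integral; the resulting one-sided bound yields finiteness; and only then is the residual $\int a|\nabla u|^2T_\eps''(u)T_\eps(u)^{p-1}\,dx$ shown to vanish, via the pointwise bound $T_\eps''(x)T_\eps(x)^{p-1}\leq|x|^{p-2}$ for $x\neq0$ together with the real-analyticity of $u$ to handle the contribution from $\{u=0\}$. Without this split and the vanishing of the residual you obtain only the one-sided inequality $(p-1)\int a|\nabla u|^2|u|^{p-2}\leq-\int\div(a\nabla u)|u|^{p-2}u$, not the claimed equality. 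You should also, as the paper does, first smooth $a$ to $W^{1,\infty}$, since otherwise $\div(a\nabla u)$ need not lie in $L^p$ and the dominated-convergence step on the left-hand side of the identity does not obviously make sense.
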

\begin{proof}
The case $a\equiv1$ has been treated in   
\cite{D2} and  readily entails  the left inequality in \eqref{eq:bernstein}
for one may write, owing to the case $a\equiv1,$  
$$
c\,a_*\,R_1^2\int|u|^p\,dx\leq p^2\int a_*|\nabla u|^2|u|^{p-2}\,dx
\leq  p^2\int a|\nabla u|^2|u|^{p-2}\,dx.
$$
Let us now justify  the right equality in \eqref{eq:bernstein}. 
In the case $p\geq2,$ it stems from a straightforward integration by parts. 

Let us focus on the case $1<p<2$ which is more involved. Smoothing out $a$ if
needed, one may assume with no loss of generality that $a$ is  in $W^{1,\infty}.$
Let  $T_\eps(x)=\sqrt{x^2+\eps^2}$ for $x\in\R$ and $\eps>0$. We have 
$$
\displaylines{
-\int_{\R^N}\div(a\nabla u) (T_\eps(u))^{p-1}T'_\eps(u)\,dx=(p-1)\int_{\R^N}
a|\nabla u|^2|T'_\eps(u)|^2\bigl(T_\eps(u)\bigr)^{p-2}\,dx\hfill\cr\hfill
 +\int_{\R^N}
a|\nabla u|^2T^{''}_\eps(u)(T_\eps(u))^{p-1}\,dx.\quad}
$$
In view of the monotonous convergence theorem, 
$$
\lim_{\eps\rightarrow0}\int_{\R^N}
a|\nabla u|^2|T'_\eps(u)|^2\bigl(T_\eps(u)\bigr)^{p-2}\,dx
=\int_{\R^N}
a|\nabla u|^2\,|u|^{p-2}\,dx\in \overline \R^+.
$$
Next, we notice that 
 \begin{equation}\label{eq:L1}
 |\div(a\nabla u) (T_\eps(u))^{p-1}T'_\eps(u)|\leq
|u|^{p-1}|\div(a\nabla u)|.\end{equation}
  Now, as $a\in W^{1,\infty}$  and $u$ is a smooth function 
with all derivatives in $L^p$
(owing to the spectral localization), 
one may write
$$
\div(a\nabla u)=a\Delta u+\nabla a\cdot\nabla u,
$$
hence $\div(a\nabla u)$ is in $L^p$ 
and the right-hand side of \eqref{eq:L1} is an integrable function. 
So finally   Lebesgue's  dominated
convergence theorem entails that
$$
\lim_{\eps\rightarrow0}\int_{\R^N}\div(a\nabla u) (T_\eps(u))^{p-1}T'_\eps(u)\,dx=
\int_{\R^N}u|u|^{p-2}\div(a\nabla u)\, dx.
$$
Therefore 
\begin{equation}\label{eq:A1}
(p-1)\int_{\R^N}
a|\nabla u|^2\,|u|^{p-2}\,dx\leq -\int_{\R^N}u|u|^{p-2}\div(a\nabla u)\, dx<\infty.
\end{equation}
In fact, equality does hold.
Indeed, whenever $x\not=0$, the term $T^{''}_\eps(x)T_\eps(x)^{p-1}$ tends to $0$
when $\eps$ goes to $0$ and $$
T^{''}_\eps(x)T_\eps(x)^{p-1}=|x|^{p-2}\Frac{(\eps/x)^2}{\bigl(1+(\eps/x)^2\bigr)^{2-{\frac p2}}}\leq |x|^{p-2}.
$$
Therefore, 
as, according to \eqref{eq:A1}, the function   $|\nabla u|^2\,|u|^{p-2}$ is integrable over $\R^N,$
we get $$
 \lim_{\eps\rightarrow0}\int_{u\not=0}a|\nabla
u|^2T^{''}_\eps(u)(T_\eps(u))^{p-1}\,dx=0. $$
On the other hand, as $u$ is real analytic,
$$
\int_{u=0}a|\nabla
u|^2T^{''}_\eps(u)(T_\eps(u))^{p-1}\,dx=\eps^{p-2}\int_{u=0}a|\nabla
u|^2=0.$$
\end{proof}

\end{document}